\documentclass[a4paper]{article}
\usepackage{amssymb}
\usepackage{graphicx}
\usepackage{amsmath}
\usepackage[T1]{fontenc}
\usepackage[usenames, dvipsnames]{color}
\usepackage{booktabs}
\definecolor{light-gray}{gray}{0.85}
\usepackage{graphicx}
\usepackage{enumitem}
\usepackage{caption}
\usepackage{subcaption}
\usepackage{changepage}
\usepackage[draft]{hyperref}

\usepackage[english]{babel}

\newtheorem{theorem}{Theorem}[section]

\newtheorem{corollary}{Corollary}[section]

\newtheorem{lemma}{Lemma}[section]

\newtheorem{proposition}{Proposition}[section]

\newenvironment{proof}[1][Proof]{\textbf{#1.} }{\ \rule{0.5em}{0.5em}}
\pagestyle{plain} \oddsidemargin0cm \topmargin-.6cm \textheight22cm
\textwidth16cm
\parindent0.5cm

\def\X{{\cal X}}
\def\Y{{\cal Y}}
\def\Z{{\cal Z}}
\def\B{{\cal B}}

\def\ZZ{{\bf Z}}

\def\pp{\psi_r}

\def\DEF{\stackrel{\mbox{\scriptsize{\normalfont{def}}}}{=}}
\makeatletter
\newsavebox\myboxA
\newsavebox\myboxB
\newlength\mylenA
\usepackage{titlesec}
\usepackage[titletoc,toc,title]{appendix}
\usepackage{titlesec}
\usepackage[titletoc,toc,title]{appendix}
\makeatletter
\def\namedlabel#1#2{\begingroup
   \def\@currentlabel{#2}%
   \label{#1}\endgroup
} \makeatother

\usepackage{authblk}
\begin{document}
\title{Exponential forgetting of  smoothing distributions for pairwise Markov models}
\date{}
\author{J\"uri Lember}
\author{Joonas Sova}
\affil{\small University of Tartu, Liivi 2 50409, Tartu, Estonia.\\
Email: \textit{jyril@ut.ee}; \textit{joonas.sova@gmail.com}} \maketitle

\begin{abstract}
We consider a bivariate Markov chain $Z=\{Z_k\}_{k \geq
1}=\{(X_k,Y_k)\}_{k \geq 1}$ taking values on product space $\Z=\X
\times \Y$, where $\X$ is possibly uncountable space and
$\Y=\{1,\ldots, |\Y|\}$ is a finite state-space.  The purpose of the
paper is to find sufficient conditions that guarantee the
exponential convergence of smoothing, filtering and predictive
probabilities: $$\sup_{n\geq t}\|P(Y_{t:\infty}\in
\cdot|X_{l:n})-P(Y_{t:\infty}\in \cdot|X_{s:n}) \|_{\rm TV} \leq K_s
\alpha^{t}, \quad \mbox{a.s.}$$ Here $t\geq s\geq l\geq 1$, $K_s$
is $\sigma(X_{s:\infty})$-measurable finite random variable and
$\alpha\in (0,1)$ is fixed. In the second part of the paper, we
establish two-sided versions of the above-mentioned convergence. We
show that the desired convergences hold under fairly general
conditions. A special case of above-mentioned very general model is
popular hidden Markov model (HMM). We prove that in HMM-case, our
assumptions are more general than all similar mixing-type of
conditions encountered in practice, yet relatively easy to verify.
\end{abstract}

\section{Introduction}
We consider a bivariate Markov chain $Z=\{Z_k\}_{k \geq
1}=\{(X_k,Y_k)\}_{k \geq 1}$ defined on a probability space $(\Omega, {\cal F}, {\bf P})$ and taking values on product space $\Z=\X
\times \Y$, where $\X$ is possibly uncountable space and
$\Y=\{1,\ldots, |\Y|\}$ is a finite set, typically referred to as the {\it state-space}. Process
$X=\{X_k\}_{k \geq 1}$ is seen as the observed sequence and
$Y=\{Y_k\}_{k \geq 1}$ is seen as hidden or latent variable
sequence, often referred to as  {\it the signal process}. The
process $Z$ is sometimes called the \textit{pairwise Markov model}
(PMM) \cite{pairwise, pairwise2, pairwise3, pairwise4} and covers
many latent variable models used in practice, such as hidden Markov
models (HMM) and autoregressive regime-switching models. For a
classification and general properties of pairwise models, we refer
to \cite{pairwise, pairwise3, pairwise4}. Generally, neither $Y$ nor
$X$ is a Markov chain, although for special cases they might be. In
many practical models, such as above-mentioned HMM's and Markov
switching models, the signal process $Y$ remains to be a Markov
chain. However, for every PMM, conditionally on the realization of
$X$ (resp. $Y$), the $Y$  (resp. $X$ process) is always an
inhomogenous Markov chain. The fact that we consider finite $\Y$
might seem restrictive at the first sight. The study of such models
is mainly motivated by the fact that in the most applications of
PMM's, specially of HMM's, the state space is finite, often rather
small and so it is clear that this case needs special treatment. Strictly speaking, the term "hidden Markov model" refers to the case of discrete ${\cal Y}$, the models with uncountable
state space ${\cal Y}$ are often called "state-space models" (see e.g. \cite{HMMbook}). Their difference is not only the level of mathematical abstraction, rather than different research objectives, techniques and algorithms  -- the finite $\Y$ allows effectively use many classical HMM tools like Viterbi, forward-backward and Baum-Welch algorithm, and  under finite  $\Y$ all these tools are applicable also for PMM case.
Thus the model considered in the present article could be considered
as a generalization of standard HMM, where the state space is still
finite, but the structure of the model is more involved allowing
stronger dependence between the observations. It turns out that with
finite $\Y$ many abstract
 conditions simplify so that they are easy to apply in
practice and many general conditions can be weakened. Also,
finite $\Y$ allows us to employ different technique. The observation
space $\X$, however, is very general, as it usually is in practice.
\\\\
In the current paper, the main object of interest  is the {\it
conditional signal process}, i.e. the process $Y$ conditioned on
$X$. More specifically, the purpose of the present work is to study
the distributions $P(Y_{t:t+m-1} \in \cdot|X_{s:n})$, where $m \geq
1$, $\infty\geq n \geq t \geq s$ and where we adopt the notation
$a_{l:n}$ for any vector $(a_l,\ldots,a_n)$ with $n\leq \infty$. For
$m=1$, the probabilities $P(Y_t\in \cdot |X_{1:n})$ are
traditionally called {\it smoothing probabilities}, when $t<n$, {\it
filtering probabilities}, when $t=n$ and {\it predictive
probabilities}, when $t>n$. In our paper, we deal with probabilities
$P(Y_{t:t+m-1} \in \cdot|X_{s:n})$, where $m\geq 1$ and $t\leq n$,
and we call all these distributions ($m$-block) smoothing
distributions even if $t=n$ or $t+m>n$. Our first main result
(Theorem \ref{th:forgetting} below) states that when $Z$ is positive
Harris chain, then  under some additional conditions, stated as {\bf A1} and {\bf A2}, the following
holds: there exists a constant $\alpha \in (0,1)$ such that for
every $ t \geq s \geq l \geq 1$, it holds
\begin{equation}\label{essa}
\sup_{n\geq t}\sup_{m\geq 1}\| P(Y_{t:t+m-1} \in \cdot|X_{l:n})-P(Y_{t:t+m-1} \in \cdot|X_{s:n}) \|_{\rm TV} \leq C_s \alpha^{t-s}=K_s \alpha^t ,
\quad \mbox{a.s.},
\end{equation}
where $C_s$ is a $\sigma(X_{s:\infty})$-measurable finite random
variable, $K_s\DEF C_s\alpha^{-s}$ and for any signed measure $\xi$ on $\Y$, $\|\xi\|_{\rm TV}
\DEF \sum_{i \in \Y}|\xi(i)|$ denotes the total variation norm of
$\xi$. Here and in what follows, when not stated otherwise, a.s. statements are with respect to measure ${\bf P}$. In this case, the distribution of $Z_1$ is not specified. Sometimes we would like to specify it, like $Z_1\sim \pi$, and then we write $P_{\pi}$-a.s. instead.
In words, (\ref{essa}) states that the total variation distance of two smoothing
distributions decrease exponentially in $t$. In Subsection \ref{subsec:ex}, we shall see that a martingale convergence argument allows us to deduce
from  (\ref{essa}) the following bound (the inequality
(\ref{vater}) below)
\begin{equation*}
\|P(Y_{t:\infty}\in \cdot|X_{l:\infty})-P(Y_{t:\infty}\in
\cdot|X_{s:\infty}) \|_{\rm TV} \leq K_s \alpha^{t}, \quad
\mbox{a.s.}.
\end{equation*}
We also argue that the same approach (and the same assumptions)
yields to the inequality
\begin{equation}\label{tilde0}
\|P_{\pi}(Y_{t:\infty}\in
\cdot|X_{s:n})-P_{\tilde{\pi}}(Y_{t:\infty}\in \cdot|X_{s:n})\|_{\rm
TV}\leq K_s\alpha^{t},\quad  P_{\pi}-{\rm a.s.}.
\end{equation}
where $\pi$ and $\tilde{\pi}$ are two initial distributions of $Z_1$
respectively, $\tilde{\pi}$ is absolutely continuous with respect to $\pi$, denoted by $\tilde{\pi}\succ \pi$, $P_{\pi}$ and
$P_{\tilde{\pi}}$ are the distributions of $Z$ under $\pi$ and
$\tilde{\pi}$ and, as previously, $\infty\geq n \geq t \geq s \geq
1$. Since $K_s$ is $P_{\pi}$-a.s. finite, the inequality (\ref{tilde0})
implies that for $P_{\pi}$-a. e. realization of $X$, the difference
$\|P_{\pi}(Y_{t:\infty}\in
\cdot|X_{s:\infty})-P_{\tilde{\pi}}(Y_{t:\infty}\in
\cdot|X_{s:\infty})\|_{\rm TV}$ tends to zero exponentially fast in
$t$. The convergence to zero is sometimes referred to as {\it the
weak ergodicity of Markov chain in random environment} \cite{vH09b},
we thus prove that the weak ergodicity is actually geometric.
Although (\ref{tilde0}) implies (\ref{essa}), in the present paper
we concentrate on the inequalities of type (\ref{essa}), because
they allow us to obtain the two sided generalizations. For two-sided
versions of these inequalities, let us consider the two-sided stationary Markov
chain $\{Z_k\}_{k \in \mathbb{Z}}$. In Subsection \ref{subsec:ex}, we shall see that
$$\lim_{l,s \rightarrow \infty} P(Y_{t:t+m-1}\in \cdot|X_{t-l:t+s})
= P(Y_{t:t+m-1}\in \cdot|X_{-\infty:\infty}),\quad {\rm a.s.}.$$
 We strengthen this result by
proving that under general conditions the following holds
(Corollary {\ref{korra2}): there exists $\alpha \in (0,1)$ and a stationary
process $\{C_k\}_{k \in \mathbb{Z}}$, $C_k< \infty$,
such that for all $t \in \mathbb{Z}$, $m\geq 1$, and $l,s \geq 0$
\begin{align}\label{kkk}
\|P(Y_{t:t+m-1}\in \cdot|X_{t-l:t+s})-P(Y_{t:t+m-1}\in \cdot|X_{-\infty:\infty})\|_{\rm TV} \leq C_t
\alpha^{l \wedge s}, \quad \mbox{a.s.},
\end{align}
where $\wedge$ denotes the minimum. The random variable $C_t$ is $\sigma(X_{-\infty:t},X_{t+m-1,\infty})$-measurable and
the process $\{C_k\}_{k \in \mathbb{Z}}$ is  ergodic when
 $\{Z_k\}_{k \in \mathbb{Z}}$ is ergodic. Another result of this type
 (Corollary \ref{korra3} below) states that under the same assumptions (we
take $m=1$, for simplicity)
\begin{equation}\label{kk}
\|P(Y_{t}\in \cdot|X_{1:n})-P(Y_{t}\in
\cdot|X_{-\infty:\infty})\|_{\rm TV}\leq C_1\alpha^{t-1}+\bar{C}_k{\alpha}^{k-t} ,\quad {\rm a.s.},
\end{equation}
where $C_1$ is $\sigma(X_{1:\infty})$-measurable, $\bar{C}_k$
is $\sigma(X_{-\infty:k})$-measurable and the process $\{\bar{C}_k\}_{k\in
\mathbb{Z}}$ is ergodic when $\{Z_k\}_{k \in
\mathbb{Z}}$ is. Although the constants $C$ and $\bar{C}$ in all
above-stated inequalities are random, nevertheless the  bounds can
be useful in various situations when pathwise limits are of
interest. For example, the inequality (\ref{tilde0}) implies that
$$\limsup_t \ln {1\over t} \sup_{n\geq t}\|P_{\pi}(Y_{t:\infty}\in
\cdot|X_{1:n})-P_{\tilde{\pi}}(Y_{t:\infty}\in
\cdot|X_{1:n})\|_{\rm TV}\leq \ln \alpha<0$$
 (for similar type of bounds see also
\cite{Ung2,forgInit3,LeGlandMevel})  and the inequality (\ref{kkk})
is very useful  when one needs to approximate the smoothing
probability $P(Y_{t:t+m-1}\in \cdot|X_{t-l:t+s})$ with something
being independent of $l$ and $s$. We shall briefly discuss the
motivation of inequalities type (\ref{kk}) and (\ref{kkk}) in the
point of view of segmentation theory  below. The  assumptions {\bf A1} and {\bf A2} are stated, discussed and interpreted in Subsection \ref{subsec:assump}.
\paragraph{Relation with the previous work.}
The most popular type of
PMM's are HMM's, where  the underlying process $Y$ is a Markov
chain, and given $Y_n=i$, the observation $X_n$ is generated
according to a probability distribution attached to the state $i$
and independently of everything else. Therefore, the vast
majority of the study of smoothing and filtering probabilities are
done for HMM's, where the study of these issues has relatively long
history dating back to  1960's, where well-known forward-backwards
recursions for calculating these probabilities (for HMM's) were
developed. The {\it forgetting properties} typically refer to the
convergence
\begin{equation}\label{flt}
\|P_{\pi}(Y_{t}\in \cdot|X_{s:n})-P_{\tilde{\pi}}(Y_{t}\in
\cdot|X_{s:n})\|_{\rm TV}\to 0,\quad {\rm a.s.}\end{equation} (as
$t\to \infty$, $n\geq t$) and the inequalities of type
(\ref{tilde0}) are often referred to as {\it exponential smoothing}.
For $n=t$, the convergence (\ref{flt}) is called {\it filter
stability}, and it is probably the most studied convergence in the
literature. For an overview of several forgetting properties and
mixing type conditions ensuring forgetting (in HMM case), we refer
to \cite[Ch. 3,4]{HMMbook}. Some of these conditions are also
restated in Subsection \ref{HMM}. The list of research articles
dealing with various aspects of forgetting and filtering problems in
HMM setting is really long including
\cite{ChigSPA,ChigFilter,forgInit1,forgInit3,DMR,LeGlandMevel,Ung3,Ung2,forgInit2,zeitouni,smoothing},
just to mention a few more prominent articles.
Majority of
above-mentioned papers deal with (exponential) forgetting of filters
and filter stability of the state space models i.e. they consider
the case where the state space $\Y$ of Markov chain is very general, possibly uncountable.
In these papers, various mixing conditions for filter stability and forgetting properties are stated. These conditions are often appropriate  and justified for general $\Y$, but
when applied to the case of finite $\Y$, they  might become  too restrictive or limited.
Hence the case of finite $\Y$ needs special treatment and so it is also quite expected that in the case of finite
  $\Y$
 our main assumption {\bf A1}, designed for discrete $\Y$, is more general that the ones made in all
above-mentioned papers. For many  models
mentioned in the literature, {\bf A1} is easy to verify, but
 we provide a more practical  condition -- {\it cluster
condition} -- which is more general than many similar assumptions
encountered in the HMM-literature, yet very easy to check.
Since HMM's are so
important class of models,  Subsection \ref{HMM} is fully devoted to
HMM-case.
Besides presenting the results,  Subsection \ref{HMM} also aims to give a state-of-art
overview of mixing-type conditions for finite-state HMM's.
\\\\
Recently, a significant contribution to the study of smoothing
probabilities (with continuous state space) was made by  van Handel
and his colleagues
\cite{vH08,vH09a,vH09b,vH09c,ChigvH10,vH12,TvH12,TvH14,vH15}. Again,
most of the papers deals with HMM's, but in  \cite{TvH12,TvH14},
also more general PMM's are considered.  In particular, they
consider a special class of PMM's, called non-degenerate PMM's. The
crucial feature of non-degenerate PMM's is that by some change of
measure the dynamics of $X$ and $Y$-process can be made independent
(see Subsection \ref{sec:nondeg} for precise definition).  While
natural in continuous-space setting, for finite $\Y$ this assumption
might be restrictive and in Subsection \ref{sec:nondeg} we show that
the assumptions in \cite{TvH12} imply {\bf A1} and {\bf A2}. For
HMM's, the non-degeneracy simply means strictly positive emission
densities and in Subsection \ref{HMM} we show several ways how to
relax it.
\\\\
The present work generalizes and builds on the approach in
\cite{smoothing,smoothing2}, where solely the HMM-case was
considered. In many ways,  HMM  is technically much simpler model to
handle, hence the generalization from HMM to PMM is far from being
straightforward. Moreover, our second main result, Theorem
\ref{thm2} cannot be found in the earlier papers even in HMM case.
Also, for the HMM case, the cluster condition introduced in the
present paper is significantly weaker than the one in
\cite{smoothing,smoothing2}. The proofs of our main results rely on
the Markovian block-decomposition of the conditional hidden chain,
 \textbf{A1} is used to bound from above
the Dobrushin coefficient of certain block-transitions.
\paragraph{Applications in segmentation.} The motivation of studying the inequalities (\ref{essa}) and the
two-sided inequalities (\ref{kkk}) and (\ref{kk}) (instead of just
filtering ones) comes from the so-called {\it segmentation problem}
that aims to prognose or estimate the hidden underlying path
$y_{1:n}$ given a realization $x_{1:n}$ of observed process
$X_{1:n}$. The goodness of any path $s_{1:n}\in {\cal Y}^n$ is typically measured via
 {\it loss function} $L: {\cal Y}^n\times {\cal Y}^n \to [0,\infty],$ where
$L(y_{1:n},s_{1:n})$ measures the loss when the actual state sequence is
$y_{1:n}$ and the estimated sequence is $s_{1:n}$. The best path is then the one that minimizes the expected loss
\[
E[L(Y_{1:n},s_{1:n})|X_{1:n}=x_{1:n}]=\sum_{y_{1:n}\in
{\cal Y}^n}L(y_{1:n},s_{1:n})P(Y_{1:n}=y_{1:n}|X_{1:n}=x_{1:n}).
\]
over all possible state sequences $s_{1:n}$. A common loss function measures the similarity of the sequences entry-wise, i.e.
$$L(y_{1:n},s_{1:n})=\sum_{t=1}I_{y_t\ne s_t},$$
 where $I_{y_t\ne s_t}=0$ if and only if $y_t=s_t$, otherwise $I_{y_t\ne s_t}=1$. Thus $L(y_{1:n},s_{1:n})$ counts the classification errors  of path $s_{1:n}$ and the expected number of classification errors is
$$E[L(Y_{1:n},s_{1:n})|X_{1:n}=x_{1:n}]=n-\sum_{t=1}^nP(Y_t=s_t|X_{1:n}=x_{1:n}].$$
Now it is clear that the path $\hat{y}_{1:n}$ that minimizes the expected loss is also the path that minimizes the expected number of classification errors and it can obtained by pointwise maximization of smoothing probabilities, i.e.
\begin{align*}
\hat{y}_{1:n}&=\arg\min_{s_{1:n}}E[L(Y_{1:n},s_{1:n})|X_{1:n}=x_{1:n}]=\arg\max_{s_{1:n}}\sum_{t=1}^nP(Y_t=s_t|X_{1:n}=x_{1:n}]\quad \Leftrightarrow\quad \\
\hat{y}_t&=\arg\max_{y\in \Y}P(Y_t=y|X_{1:n}=x_{1:n}),\quad t=1,\ldots,n.\end{align*}
Any such path is called {\it pointwise
maximum a posteriori} (PMAP)
(see, e.g. \cite{seg,kuljus,peep}). The PMAP path is easy to calculate via forward-backward algorithms that hold for PMM as well as for HMM. \\\\
When $n$ varies, it is convenient to divide the expected loss by $n$, and so we study the
time-averaged expected number of
classification errors of PMAP path as follows:
$$1-{1\over n}\sum_{t=1}^n \max_{y\in \Y} P(Y_t=y|X_{1:n}).$$
This number can be considered as the (best possible) expected number of classification errors per one time entry. It turns out that when $Z$ is an ergodic process
satisfying our general assumptions {\bf
A1} and {\bf A2}, then there exists a constant $R\geq 0$ so that
$$1-{1\over n}\sum_{t=1}^n \max_{y\in \Y} P(Y_t=y|X_{1:n})\to
R,\quad \text{a.s.},$$ where $R$ is a constant. The number
$R$ is solely depending on the model and characterizes the its
segmentation capacity -- the smaller $R$, the easier the
segmentation. The proof of this
convergence in HMM-case is given in \cite{smoothing2,kuljus}, but it
holds without changes in more general PMM case as well. The proof relies largely   on the
inequality (\ref{kk}), being thus an example of the use of this kind of inequalities.
 For the discussion about the importance of the existence of limit $R$ as well
as for another applications of inequalities of type (\ref{kk}) and
(\ref{essa}) in the segmentation context, see \cite{kuljus,peep}.
These papers deal with HMM's only, but with the exponential
forgetting results of the present paper, the generalization to PMM
case is possible.
\\\\
It is interesting to notice that our main  condition \textbf{A1} is
not only relevant to smoothing distributions. This condition has
been used, albeit in slightly more restricted form, in the
development of the Viterbi process theory \cite{PMMinf,AV,Eng}. This
suggests that the condition {\bf A1} is essential form many
different aspects and captures well the mixing properties. When the
observation space $\X$ is finite, then \textbf{A1} essentially
becomes what is known in ergodic theory as the
\textit{subpositivity} of some observation string.

\section{Preliminaries}
\paragraph{The model and some basic notation.}
We will now state the precise theoretical framework of the paper. We
assume that observation-space $\mathcal{X}$ is a Polish (separable
completely metrizable) space equipped with its Borel $\sigma$-field
$\B(\X)$. We denote $\mathcal{Z}= \X \times \Y$, and equip $\Z$ with
product topology $\tau \times 2^\Y$, where $\tau$ denotes the
topology of $\X$. Furthermore, $\Z$ is equipped with its Borel
$\sigma$-field $\B(\Z)=\B(\X) \otimes 2^\Y$, which is the smallest
$\sigma$-field containing sets of the form $A \times B$, where $A
\in \B(\X)$ and $B \in 2^\Y$. Let $\mu$ be a $\sigma$-finite measure
on $\B(\X)$ and let $c$ be the counting measure on $2^\mathcal{Y}$.
Finally, let
\begin{align*}
q \colon \Z^2 \rightarrow [0,\infty ), \quad (z',z) \mapsto q(z'|z)
\end{align*}
be a  measurable non-negative function such that for each $z \in
\mathcal{Z}$ the function $z' \mapsto q(z'|z)$ is a probability density function  with
respect to product measure $\mu\times c$. We define random process
$Z =\{Z_k \}_{k\geq 1} = \{(X_k,Y_k) \}_{k \geq 1}$ as a homogeneous
Markov chain on the two-dimensional space $\mathcal{Z}$ having the
transition kernel density $q(z'|z)$. This means that the transition
kernel of $Z$ is defined as follows:
\begin{align}\label{deffa}
&P(Z_2 \in C|Z_1=z)= \int_C q(z'|z) \, \mu \times c (dz'), \quad z
\in \mathcal{Z}, \quad C \in \B(\Z).
\end{align}
Since every $C\subset \B(\Z)$ is in the form $C=\cup_{j\in
\Y}A_j\times \{j\}$, where $A_j\in \B(\X)$, the probability
(\ref{deffa}) reads
$$P(Z_2 \in C|X_1=x,Y_1=i)=\sum_{j\in \Y}P(X_2\in A_j,Y_2=j|X_1=x,Y_1=i)=\sum_{j}\int_{A_j}
q(x',j|x,i)\mu(dx').$$ We also assume that $Z_1$ has density with
respect to product measure $\mu\times c$. Then, for every $n$, the
random vector $Z_{1:n}$ has a density with respect to the measure
$(\mu\times c)^n$.  In what follows, with a slight abuse of notation
the letter $p$ will be used to denote the various joint and
conditional densities. Thus $p(z_k)=p(x_k,y_k)$ is the density of
$Z_k$ evaluated at $z_k=(x_k,y_k)$, $p(z_{1:n})=p(z_1)\prod_{k=2}^n
q(z_k|z_{k-1})$ is the density of $Z_{1:n}$ evaluated at $z_{1:n}$,
$ p(z_{2:n} |z_1)= \prod_{k=2}^n q(z_k|z_{k-1})$ stands for the
conditional density and so on. Sometimes it is convenient to use
other symbols beside $x_k,y_k,z_k$ as the arguments of some density;
in that case we indicate the corresponding probability law using the
equality sign, for example
\begin{align*}
p(x_{2:n}, y_{2:n}|x_1=x,y_1=i)=q(x_2,y_2|x,i)\prod_{k=3}^n q(x_k,
y_k|x_{k-1},y_{k-1}), \quad n \geq 3.
\end{align*}
The notation $P_z(\cdot)$ will represent the probability measure,
when the initial distribution of $Z$ is the Dirac measure on $z \in
\mathcal{Z}$ (i.e. $P_z(A)=P(A|Z_1=z)$). For a probability measure
$\nu$ on $\B(\Z)$, $P_\nu(\cdot)$ denotes the probability measure,
when the initial distribution of $Z$ is $\nu$ (i.e. $P_\nu(A)=\int
P_z(A) \, \nu(dz)$).
\\\\
The marginal processes $\{X_k \}_{k \geq 1}$ and $\{Y_k \}_{k \geq
1}$ will be denoted with $X$ and $Y$, respectively. It should be
noted that even though $Z$ is a Markov chain, this doesn't
necessarily imply that either of the marginal processes $X$ and $Y$
are Markov chains. However, it is not difficult to show that
conditionally given $X_{1:n}$, $Y_{1:n}$ is a (generally
non-homogeneous) Markov chain and vice-versa.
\\\\
For any set $A$ consisting of vectors of length $r>1$ we adopt the
following notation:
\begin{align*}
&A_{(k)}\DEF\{x_{k} \:| \: x_{1:r} \in A\}, \quad 1 \leq k \leq r.
\end{align*}
Alternatively, $A_{(k)}=f_k(A)$, where $f_k$ is the $k$-th
projection.
\paragraph{General state space Markov chains.} We will now recall some necessary concepts from the general state Markov chain theory. Markov chain $Z$ is called
\textit{$\varphi$-irreducible} for some $\sigma$-finite measure
$\varphi$ on $\B(\Z)$, if $\varphi(A)>0$ implies $\sum_{k=2}^\infty
P_z(Z_k \in A)>0$ for all $z \in \Z$. If $Z$ is
$\varphi$-irreducible, then there exists \cite[Prop. 4.2.2.]{MT} a
\textit{maximal irreducibility measure} $\psi$ in the sense that for
any other irreducibility measure $\varphi'$ the measure $\psi$
dominates $\varphi'$, $\psi \succ \varphi'$. The symbol $\psi$ will
be reserved to denote the maximal irreducibility measure of $Z$.
Chain $Z$ is called \textit{Harris recurrent}, when it is
$\psi$-irreducible and $\psi(A)>0$ implies $P_z(Z_k \in A \mbox{
i.o.})=1$ for all $z \in \Z$. Chain $Z$ is called \textit{positive}
if its transition kernel admits an invariant probability measure.
Any $\psi$-irreducible chain admits a cyclic decomposition \cite[Th.
5.4.4]{MT}: there exists disjoint sets $D_0,\ldots,D_{d-1} \subset
\Z$, $d\geq 1$, such that
\begin{enumerate}[label=(\roman*)]
\item for $z \in D_k$, $P_z(Z_2 \in D_{k+1})=1$, $k=0,\ldots,d-1 \pmod d$;
\item $(\cup_{k=1}^d D_k )^{\mathsf{c}}$ is $\psi$-null.
\end{enumerate}
The cycle length $d$, called the {\it period} of $Z$, is chosen to
be the largest possible in the sense that for any other collection
$\{d',D_k', k=0,\ldots, d'-1\}$ satisfying (i) and (ii), we have
$d'$ dividing $d$; while if $d=d'$, then, by reordering the indices
if necessary, $D_k'=D_k$ a.e. $\psi$. A $\psi$-irreducible chain $Z$
is called \textit{aperiodic}, when its period is 1, $d=1$.
\paragraph{Overlapping $r$-block process.} For every $r>1$, define
 ${\bf Z}_{k}\DEF Z_{k:k+r-1}$, $k \geq 1$. Thus $\ZZ=\{\ZZ_k\}$ is a
Markov process with the state space  ${\cal Z}^r$ and transition
kernel
$$P(\ZZ_{2}\in A|\ZZ_1=z_{1:r})=P\big(Z_{2:r+1}\in
A|Z_{1:r}=z_{1:r}\big)=P\big( Z_{r+1}\in
A(z_{2:r})|Z_1=z_1),$$ where  $A\in {\cal B}(\Z)^r$,
$$A(z_{2:r})\DEF\{z: (z_{2:r},z)\in A\}.$$
Similarly, for every set $A\subset \Z^r$, and $z_1\in \Z$, we denote
 $A(z_1)\DEF\{z_{2:r}\:| \: z_{1:r} \in A\}$. The
following proposition (proof in Appendix) specifies the maximal
irreducible measure of $\ZZ$ .
\begin{proposition}\label{block} If $Z$ is positive Harris with stationary
probability measure $\pi$ and maximal irreducible measure $\psi$,
then $\ZZ$ is a positive Harris chain with maximal irreducible
measure $\psi_r$, where
\begin{equation}\label{moot}
{\mathbf \psi}_r(A)\DEF\int_{A_{(1)}}P\big(Z_{2:r}\in
A(z_1)\mid Z_1=z_1\big)\psi(dz_1),\quad A\in \B(\Z)^{\otimes
r}.\end{equation}
\end{proposition}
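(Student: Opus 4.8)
The plan is to verify the four defining features in turn --- positivity, $\psi_r$-irreducibility, Harris recurrence, and maximality of $\psi_r$ --- reducing each to the corresponding property of $Z$ through the block structure $\ZZ_k=Z_{k:k+r-1}$. For positivity I would exhibit an invariant probability measure directly: let $\pi_r(A)\DEF P_\pi(Z_{1:r}\in A)$ be the law of an $r$-block under the stationary chain. Since shifting preserves $P_\pi$, the vectors $Z_{2:r+1}$ and $Z_{1:r}$ have the same law, so $(\pi_r P_{\ZZ})(A)=P_\pi(\ZZ_2\in A)=P_\pi(Z_{2:r+1}\in A)=\pi_r(A)$; hence $\pi_r$ is invariant for $\ZZ$ and $\ZZ$ is positive. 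I would record here the key observation that $\pi_r$ has \emph{exactly} the form of (\ref{moot}), namely $\pi_r(A)=\int_{A_{(1)}}P(Z_{2:r}\in A(z_1)\mid Z_1=z_1)\,\pi(dz_1)$, differing from $\psi_r$ only in the reference measure ($\pi$ versus $\psi$) for the first coordinate. This is reused for maximality.

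For $\psi_r$-irreducibility, suppose $\psi_r(A)>0$. Writing $B\DEF\{z_1\in A_{(1)}:P(Z_{2:r}\in A(z_1)\mid Z_1=z_1)>0\}$, formula (\ref{moot}) forces $\psi(B)>0$. Fix any starting block $z_{1:r}$, so the underlying chain continues from $Z_r=z_r$; by $\psi$-irreducibility of $Z$ there is $j\ge 1$ with $P(Z_{r+j}\in B\mid Z_r=z_r)>0$. Setting $k=r+j\ge 2$ and conditioning on $Z_k$, the Markov property and the definition of $B$ give $P_{z_{1:r}}(\ZZ_k\in A)\ge\int_B P(Z_{2:r}\in A(w)\mid Z_1=w)\,P(Z_k\in dw\mid Z_r=z_r)>0$, because $\{Z_k\in B,\ Z_{k+1:k+r-1}\in A(Z_k)\}\subseteq\{\ZZ_k\in A\}$. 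The $\sigma$-finiteness of $\psi_r$ follows from $\psi_r(A)\le\psi(A_{(1)})$ applied to the blocks $E_n\times\Z^{r-1}$ along a $\sigma$-finite exhaustion $\Z=\cup_n E_n$ of $\psi$.

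I expect Harris recurrence to be the main obstacle, since a visit of $Z$ to $B$ need not complete into a visit of $\ZZ$ to $A$, and the completion probability is not bounded away from $0$ over $B$. I would circumvent this with the level-set decomposition $B=\cup_m B_m$, where $B_m\DEF\{w\in A_{(1)}:P(Z_{2:r}\in A(w)\mid Z_1=w)>1/m\}$, and fix $m$ with $\psi(B_m)>0$. By Harris recurrence of $Z$, the chain visits $B_m$ infinitely often a.s.; I define thinned return times $\tau_1<\tau_2<\cdots$ to $B_m$ with $\tau_{j+1}\ge\tau_j+r$ (all a.s.\ finite) and events $E_j\DEF\{\ZZ_{\tau_j}\in A\}$, for which the strong Markov property yields $P(E_j\mid\mathcal F_{\tau_j})>1/m$ on $\{Z_{\tau_j}\in B_m\}$. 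With the filtration $\mathcal G_j\DEF\mathcal F_{\tau_{j+1}}$, one has $E_j\in\mathcal G_j$ and $P(E_j\mid\mathcal G_{j-1})=P(E_j\mid\mathcal F_{\tau_j})>1/m$, so $\sum_j P(E_j\mid\mathcal G_{j-1})=\infty$ and the L\'evy (conditional) Borel--Cantelli lemma gives $\ZZ_{\tau_j}\in A$ infinitely often a.s. Thus $\ZZ$ visits every $\psi_r$-positive set infinitely often, so it is Harris recurrent, and with positivity it is positive Harris.

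Maximality then follows cheaply from the observation of the first paragraph. Since $Z$ is positive Harris, its invariant probability is equivalent to its maximal irreducibility measure, $\pi\sim\psi$. Applied to the nonnegative integrand $z_1\mapsto\mathbf 1_{A_{(1)}}(z_1)\,P(Z_{2:r}\in A(z_1)\mid Z_1=z_1)$, this equivalence gives $\psi_r(A)=0\iff\pi_r(A)=0$, i.e.\ $\psi_r\sim\pi_r$. As the invariant probability measure of the positive Harris chain $\ZZ$, the measure $\pi_r$ is a maximal irreducibility measure; and any measure equivalent to a maximal irreducibility measure is again maximal. Hence $\psi_r$ is the maximal irreducibility measure of $\ZZ$, which completes the argument.
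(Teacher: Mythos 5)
Your proposal is correct, and its overall skeleton matches the paper's: positivity via stationarity of the block process, irreducibility by first reaching the $\psi$-positive set of first coordinates and then completing the block, then Harris recurrence, then maximality from the equivalence $\pi\sim\psi$. The genuine difference is in the Harris recurrence step. The paper reduces it to Lemma \ref{lem:io}: it performs the same level-set reduction you do (passing to a subset of first coordinates on which the block-completion probability is uniformly bounded below) and then \emph{cites} an external result (Lemma A.1 of \cite{PMMinf}) asserting that a Harris chain completes such blocks infinitely often. You instead prove that completion statement from scratch: return times to $B_m$ thinned so that $\tau_{j+1}\geq\tau_j+r$, the strong Markov property to get $P(E_j\mid\mathcal{F}_{\tau_j})>1/m$, and L\'evy's conditional Borel--Cantelli lemma applied along the filtration $\mathcal{G}_j=\mathcal{F}_{\tau_{j+1}}$. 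This makes your argument self-contained where the paper's is not, at the cost of the routine verifications that the spacing puts $E_j$ inside $\mathcal{G}_j$ and that $\mathcal{G}_{j-1}=\mathcal{F}_{\tau_j}$; these checks are done correctly. Your maximality step is packaged differently but rests on the same Meyn--Tweedie facts: the paper verifies dominance over an arbitrary irreducibility measure $\varphi_r$ directly, using that an invariant measure dominates every irreducibility measure together with $\pi\sim\psi$, whereas you note $\psi_r\sim\pi_r$ (the stationary block law, which shares the structural formula \eqref{moot} with $\pi$ in place of $\psi$) and invoke that the invariant probability of the positive Harris chain $\ZZ$ is itself a maximal irreducibility measure --- equivalent routes with the same ingredients. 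A small bonus of your write-up is the explicit $\sigma$-finiteness check for $\psi_r$, which the paper leaves implicit.
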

\section{Exponential forgetting}
\subsection{The main assumptions}\label{subsec:assump} We shall now introduce the basic assumptions of our theory
for the non-stationary case. For every $n \geq 2$ and $i,j \in \Y$
we denote
\begin{align*}
p_{ij}(x_{1:n}) \DEF {\sum_{y_{2:n} \colon y_n=j}
p(x_{2:n},y_{2:n}|x_1,y_1=i)=p(x_{2:n},y_n=j|x_1,y_1=i)}.
\end{align*}
For any $n \geq 2$, define
\begin{align} \label{Ypairs}
\Y^+(x_{1:n}) \DEF \{ (i,j) \: | \: p_{ij}(x_{1:n})>0\}, \quad
x_{1:n} \in \X^n.
\end{align}
Recall the definition of $A_{(k)}$ and $A(x_1)$. Thus
$$\Y^+(x_{1:n})_{(1)}=\{i \: | \: \exists j \text{   such that }
p_{ij}(x_{1:n})>0\},\quad \Y^+(x_{1:n})_{(2)}=\{j \: | \: \exists i \text{
  such that } p_{ij}(x_{1:n})>0\}.$$ Observe that it is not generally the case that $\Y^+(x_{1:n})=\Y^+(x_{1:n})_{(1)} \times \Y^+(x_{1:n})_{(2)}$.
The following are the main assumptions.
\begin{description} \item{\textbf{A1}} There exists integer  $r>1$ and a set $E \subset \X^{r}$
such that $\Y^+ \DEF \Y^+(x_{1:r}) \neq \emptyset$ is the same
for all $x_{1:r} \in E$, and $\Y^+=\Y^+_{(1)} \times
\Y^+_{(2)}$.

\item{\textbf{A2}}
Chain $Z$ is $\psi$-irreducible, with $\psi(E_{(1)} \times
\Y^+_{(1)})>0$. Furthermore, $\mu^{r-1}(E(x_{1}))>0$ for all
$x_1 \in E_{(1)}$.
\end{description}
The condition \textbf{A1} is the central assumption of our theory.
The intuitive meaning of {\bf A1} is fairly
simple, because it can be considered as the "irreducibility and
aperiodicity" of conditional signal process as follows. Suppose we
have an inhomogeneous Markov chain $Y=\{Y_t\}_{t \geq 1}$, with
$\Y_t$ being the finite state space of $Y_t$. The canonical concepts
of irreducibility and aperiodicity are not defined for such a Markov
chain, but a natural generalization would be the following: for
every time $t$, there exists a time $n>t$ such that
$P(Y_n=j|Y_t=i)>0$ for every $i\in \Y_t$ and $j\in \Y_n$. If $Y$ is
homogeneous, then this property implies that $Y$ is irreducible and
aperiodic, hence also geometrically ergodic. When we fix $n>t$ and define
$$\Y^+=\{(i,j): i\in \Y_t,j\in \Y_n, P(Y_n=j|Y_t=i)>0\},$$
then the above-stated condition reads $\Y^+=\Y_t\times \Y_n$. The assumption {\bf A1}
generalizes that idea to conditional signal process. Indeed, {\bf A1} states that for every $x_{1:r}\in E$, and for every fixed $t\geq 1$, it holds that
$\Y^+=\Y^+_{(1)}\times \Y^+_{(2)}$, where  $$\Y^+=\{(i,j)\in \Y^2:
P(Y_{t+r-1}=j|Y_t=i, X_{t:t+r-1}=x_{1:r})>0\}$$
and
\begin{align*}
\Y^+_{(1)}&=\{i\in \Y: \exists j\in \Y,\quad \text{such that  }  P(Y_{t+r-1}=j|Y_t=i,\quad
X_{t:t+r-1}=x_{1:r})>0\}\\
 \Y^+_{(2)}&=\{j\in \Y: \exists i\in \Y \quad \text{such that  }
P(Y_{t+r-1}=j|Y_t=i,\quad
X_{t:t+r-1}=x_{1:r})>0\}.\end{align*}
Observe that since $Z$ is homogenous,  the set $\Y^+(x_{1:r})$ (and therefore also the sets $\Y^+_{(1)}$ and $\Y^+_{(2)}$) is independent of $t$, and {\bf A1} also ensures that it is independent of $x_{1:r}$, provided $x_{1:r}\in E$. If now $x_{1:\infty}$ is a realization of $X_{1:\infty}$ such that $x_{1:r}\in E$ and we define $\Y^+(x_{1:\infty})$ as previously, just $x_{1:r}$ replaced by $x_{1:\infty}$, then $\Y^+_{(1)}(x_{1:\infty})=\Y^+_{(1)}(x_{1:r})$, $\Y^+_{(2)}(x_{1:\infty})\subseteq\Y^+_{(2)}(x_{1:r})$, hence when $\Y^+(x_{1:r})=\Y_{(1)}^+(x_{1:r})\times \Y_{(2)}^+(x_{1:r})$, then also
$\Y^+(x_{1:\infty})=\Y_{(1)}^+(x_{1:\infty})\times \Y_{(2)}^+(x_{1:\infty})$. This observation makes {\bf A1} comparable with the definition of {\it irreducibility of conditional signal process} defined by van Handel in \cite{vH09b}. Van Handel's
definition, when adapted to our case of finite $\Y$, states that for every $t$ and
for a.e. realization $x_{t:\infty}$ of $X_{t:\infty}$, there exists
$n>t$ such that the measures $P(Y_n\in \cdot| Y_t=i_1,\,
X_{t:\infty}=x_{t:\infty})$ and $P(Y_n\in \cdot| Y_t=i_2,\,
X_{t:\infty}=x_{t:\infty})$ are not mutually singular, provided
$P(Y_t=i_k|X_{t:\infty}=x_{t:\infty})>0$ for $k=1,2$ (for non-Markov
case this condition is generalized in \cite{TvH14}). This condition
is weaker than {\bf A1}, and it has to be, because by Theorem 2.3 in
\cite{vH09b}, for stationary $X$, the above-defined irreducibility
condition is necessary and sufficient for the convergence
$\|P_{\pi}(Y_{t}\in \cdot|X_{1:\infty})-P_{\tilde{\pi}}(Y_{t}\in
\cdot|X_{1:\infty})\|_{\rm TV}\to 0,$ $P_{\pi}$-a.s. and $P_{\tilde
\pi}$-a.s., where $\pi\succ \tilde{\pi}$ and $\pi$ corresponds to
the stationary measure (weak ergodicity). On the other hand, the
condition {\bf A1} is typically met and relatively easy to verify.
Moreover, as already mentioned, our main result, Theorem
\ref{th:forgetting} states that for positive Harris $Z$, {\bf A1}
and {\bf A2} do ensure not only the weak ergodicity but also the
exponential rate of convergence. So one possibility to  generalize {\bf A1} for uncountable $\Y$ would be replacing "not mutually singular" in the definition of regularity of conditional signal process in \cite{vH09b} by "having the same support".
\\\\
The condition
\textbf{A2} ensures that $X$ returns to the set $E$ in appropriate
regularity under certain stability conditions on $Z$. Conditions
\textbf{A1}-\textbf{A2} will be discussed in more detail in case of
specific models in Section \ref{sec:Examples}.
\\\\
We note that under \textbf{A1} and \textbf{A2} we may without
loss of generality assume that for some $n_0 \geq 1$
\begin{align} \label{ineq:n0Bound}
\frac{1}{n_0}  \leq  p_{ij}(x_{1:r})\leq n_0, \quad \forall
(i,j) \in \Y^+, \quad \forall x_{1:r} \in E.
\end{align}
Indeed, let for $n \geq 1$ $$E_n=\left\{x_{1:r} \in E \: \middle| \:
\frac{1}{n} \leq \min_{(i,j) \in \Y^+}p_{ij}(x_{1:r}) \leq
\max_{(i,j) \in \Y^+}p_{ij}(x_{1:r}) \leq n \right\}.$$
 By
\textbf{A1} $ p_{ij}(x_{1:r})>0$ for every $x_{1:r} \in E$ and
$(i,j) \in \Y^+$, and so $E_n \nearrow E$. Define measure $\psi_0$
on $\X$ by $\psi_0(A)=\psi(A \times \Y^+_{(1)})$. Take $n_0$ so
large that $\psi_0 \times \mu^{r-1}(E_{n_0})>0$; this is possible by
\textbf{A2}. We would like to replace $E$ by $E_{n_o}$. Clearly \textbf{A1} holds for $E_{n_o}$ as well, and we also have
$$\psi_0 \times \mu^{r-1}(E_{n_0})=\int_{E_{n_0}(1)} \mu^{r-1}(E_{n_0}(x))\psi_o(dx)>0.$$
Unfortunately, the positive integral does not imply that $\mu^{r-1}(E_{n_0}(x))>0$ for every $x\in E_{n_0}(1)$, a property needed for \textbf{A2}. But surely there
exists a set  $E' \subset E_{n_0}$ such that
$\psi_0(E'_{(1)})>0$ and $\mu^{r-1}(E'(x_1))>0$ for all $x_1 \in
E'_{(1)}$ (otherwise the integral would be zero). Thus $E'$ satisfies both \textbf{A1} and \textbf{A2}, and
so with no loss of generality we may and shall assume that
\eqref{ineq:n0Bound} holds.
\subsection{Bounding the Dobrushin coefficient}\label{sec:dobrushin} The \textit{Dobrushin coefficient} $\delta(M)$ of a stochastic matrix $M(i,j)$ is defined as
the maximum total variation difference over all row pairs of $M$ divided by 2, i.e.
\begin{align*}
\delta(M)\DEF\dfrac{1}{2}\max_{1 \leq i < i' \leq n} \|M(i,
\cdot)-M(i',\cdot)\|_{\rm TV},
\end{align*}
where $\|\cdot\|_{\rm TV}$ stands for total variation norm. As is
well known, for any two probability rows vectors $\xi,\xi'$ of
length $n$, and for any $n \times n$ stochastic matrix, $$\|\xi M -
\xi' M\|_{\rm TV} \leq \delta(M) \|\xi-\xi'\|_{\rm TV} \leq 2
\delta(M).$$ The Dobrushin coefficient is sub-multiplicative: for
any two $n \times n$ stochastic matrices $M$ and $M'$,
$\delta(MM')\leq \delta(M)\delta(M')$. A stochastic matrix $M$ is
said to satisfy the \textit{Doeblin condition}, if there exists a
probability row vector $\xi$ and $\epsilon>0$ such that each row of
$M$ is uniformly greater than $\epsilon \xi$, i.e. $M(i,j) \geq
\epsilon \xi(j)$ for all $i,j \in \{1, \ldots,n\}$. If $M$ satisfies
such condition, then its Dobrushin coefficient has an upper bound
$\delta(M)\leq 1-\epsilon$.
\\\\
In what follows we prove our own version of the Doeblin condition.
We shall consider the observation sequences $x_{1:n}$, where
$x_{1:r}\in E$ and $n\geq r$. For those sequences define probability
distribution on $\Y$ as follows:
\begin{align*}
\lambda[x_{r:n}](j)\DEF\begin{cases}\dfrac{1}{c(x_{r:n})} p(x_{r+1:n}|y_{r}=j,x_{r}) \mathbb{I}_{\Y^+_{(2)}}(j), & \mbox{if $n >r$}\\
\mathbb{I}_{\Y^+_{(2)}}(j)/|\Y^+_{(2)}|, &\mbox{if $n=r$}
\end{cases},
\end{align*}
where $c(x_{r:n})\DEF \sum_{y_r \in \Y^+_{(2)}}
p(x_{r+1:n}|x_r,y_r)$ is the normalizing constant, $\mathbb{I}_A$
denotes the indicator function on $A$, and the set $\Y^+$ is given
by \textbf{A1}. Define the stochastic matrix
\begin{align*}
U[x_{1:n}](i,j)\DEF\begin{cases}\dfrac{p(y_r=j,x_{2:n}|x_1,y_1=i)}{p(x_{2:n}|x_1,y_1=i)}, & \mbox{if $p(x_{2:n}|x_1,y_1=i)>0$}\\
\lambda[x_{r:n}](j), &\mbox{if $p(x_{2:n}|x_1,y_1=i)=0$ and
$c(x_{r:n})>0$}
\end{cases},
\end{align*}
where $i,j \in \Y$ represent the row and column index of the matrix,
respectively. The matrix $U[x_{1:n}]$ is well-defined stochastic matrix for all
$x_{1:n}$ satisfying $c(x_{r:n})>0$. Furthermore, $j \mapsto
U[x_{1:n}](i,j)$ is (a regular) version of the conditional
distribution $P(Y_{k+r}=j|X_{k+1:k+n}=x_{1:n},Y_{k+1}=i)$ for all $k
\geq 0$.

\begin{lemma} \label{lem:Doeblin} Suppose \textbf{A1} is satisfied. Let $n \geq r$, and let $x_{1:n}$ be such that $x_{1:r} \in E$ and $p(x_{2:n}|x_1,y_1=i^*)>0$ for some $i^* \in \Y$. Then
\begin{align}  \label{ineq:Doeblin}
U(i,j)[x_{1:n}]\geq \frac{1}{n_0^2} \lambda[x_{r:n}](j), \quad
\forall i,j \in \Y.
\end{align}
\end{lemma}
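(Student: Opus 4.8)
The plan is to reduce the statement to a single factorization identity coming from the Markov property of $Z$, and then to verify the bound column by column, the product structure of $\Y^+$ supplied by \textbf{A1} being exactly what makes the uniform constant $1/n_0^2$ work. First I would record the factorization: for all $i,j\in\Y$ and all $n\geq r$,
\[
p(y_r=j,x_{2:n}|x_1,y_1=i)=p_{ij}(x_{1:r})\,a_j,
\]
where $a_j\DEF p(x_{r+1:n}|x_r,y_r=j)$ (and $a_j\equiv 1$ when $n=r$). This is immediate: writing the density of $Z_{2:n}$ given $Z_1=(x_1,i)$ as $\prod_{k=2}^n q(z_k|z_{k-1})$ and summing over $y_{2:r-1}$ and $y_{r+1:n}$ with $y_r=j$ held fixed, the product splits at index $r$ into $p(x_{2:r},y_r=j|x_1,y_1=i)=p_{ij}(x_{1:r})$ times $p(x_{r+1:n}|x_r,y_r=j)=a_j$. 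Summing this identity over $j$ also gives $p(x_{2:n}|x_1,y_1=i)=\sum_{j}p_{ij}(x_{1:r})\,a_j$, so both the numerator and the denominator of $U[x_{1:n}](i,\cdot)$ are expressed through the $p_{ij}(x_{1:r})$ and the $a_j$, precisely the quantities appearing in $\lambda$ and $c$.

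Next I would check that everything is well defined, i.e. $c(x_{r:n})>0$. The hypothesis $p(x_{2:n}|x_1,y_1=i^*)>0$ forces some $j^*$ with $p(y_r=j^*,x_{2:n}|x_1,y_1=i^*)>0$; by the factorization this means $p_{i^*j^*}(x_{1:r})>0$ (so $(i^*,j^*)\in\Y^+$ and $j^*\in\Y^+_{(2)}$) and $a_{j^*}>0$, whence $c(x_{r:n})=\sum_{j'\in\Y^+_{(2)}}a_{j'}\geq a_{j^*}>0$. Two cases are then trivial. If $p(x_{2:n}|x_1,y_1=i)=0$, then by definition $U[x_{1:n}](i,j)=\lambda[x_{r:n}](j)\geq \frac{1}{n_0^2}\lambda[x_{r:n}](j)$ since $n_0\geq 1$; and if $j\notin\Y^+_{(2)}$ the right-hand side vanishes.

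The substantive case is $p(x_{2:n}|x_1,y_1=i)>0$ and $j\in\Y^+_{(2)}$. Here the factorization gives
\[
U[x_{1:n}](i,j)=\frac{p_{ij}(x_{1:r})\,a_j}{\sum_{j'}p_{ij'}(x_{1:r})\,a_{j'}},\qquad \lambda[x_{r:n}](j)=\frac{a_j}{\sum_{j'\in\Y^+_{(2)}}a_{j'}}.
\]
If $a_j=0$ both sides of the claimed inequality are $0$, so I may assume $a_j>0$ and cancel it; the claim then reduces to
\[
n_0^2\,p_{ij}(x_{1:r})\sum_{j'\in\Y^+_{(2)}}a_{j'}\ \geq\ \sum_{j'}p_{ij'}(x_{1:r})\,a_{j'}.
\]

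The crux, and the one place where \textbf{A1} is indispensable, is the two-sided control of the $p_{ij}(x_{1:r})$. Positivity of the denominator yields some $j'$ with $p_{ij'}(x_{1:r})>0$, i.e. $(i,j')\in\Y^+$, hence $i\in\Y^+_{(1)}$; combined with $j\in\Y^+_{(2)}$ and the product structure $\Y^+=\Y^+_{(1)}\times\Y^+_{(2)}$ this forces $(i,j)\in\Y^+$, so $p_{ij}(x_{1:r})\geq 1/n_0$ by \eqref{ineq:n0Bound}. For the right-hand side, every $j'$ contributing to the sum satisfies $(i,j')\in\Y^+$ and therefore $j'\in\Y^+_{(2)}$ (again by the product structure), with $p_{ij'}(x_{1:r})\leq n_0$; thus $\sum_{j'}p_{ij'}(x_{1:r})\,a_{j'}\leq n_0\sum_{j'\in\Y^+_{(2)}}a_{j'}$. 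Multiplying $p_{ij}(x_{1:r})\geq 1/n_0$ by $n_0^2$ and comparing closes the inequality. The main obstacle is exactly this columnwise comparison: without the product form $\Y^+=\Y^+_{(1)}\times\Y^+_{(2)}$ one could neither deduce $(i,j)\in\Y^+$ from $i\in\Y^+_{(1)}$ and $j\in\Y^+_{(2)}$, nor confine the denominator sum to $\Y^+_{(2)}$, and the uniform constant $1/n_0^2$ would break down.
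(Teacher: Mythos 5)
Your proof is correct and follows essentially the same route as the paper's: factorize the conditional density at index $r$ via the Markov property, use the product structure $\Y^+=\Y^+_{(1)}\times\Y^+_{(2)}$ from \textbf{A1} to confine the denominator sum to $\Y^+_{(2)}$ and guarantee $(i,j)\in\Y^+$, then apply the two-sided bound \eqref{ineq:n0Bound} to obtain the constant $1/n_0^2$. The only differences are organizational (you split cases on positivity of $p(x_{2:n}|x_1,y_1=i)$ rather than on $i\in\Y^+_{(1)}$, and handle $n=r$ uniformly via $a_j\equiv 1$), which do not change the substance of the argument.
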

\begin{proof} We only consider the case $n >r$; the proof for $n=r$ follows along similar, although simpler arguments. Let $x_{1:n}$ be such that $x_{1:r} \in E$ and $p(x_{2:n}|x_1,y_1=i^*)>0$ for some $i^* \in \Y$. First we show that
\begin{align} \label{equiv:px}
p(x_{2:n}|x_1,y_1)>0 \quad \mbox{if and only if} \quad  y_1 \in
\Y^+_{(1)}.
\end{align}
Indeed, since by assumption, $p(x_{2:n}|x_1,y_1=i^*)=\sum_j
p_{i^*j}(x_{1:r})p(x_{r+1:n}|x_r,y_r=j)>0$, then there exists a
$j^*$ such that
\begin{align}
&p_{i^*j^*}(x_{1:r})>0 \quad \mbox{and} \notag\\
&p(x_{r+1:n}|x_r,y_r=j^*)>0.  \label{ineq:j*}
\end{align}
Thus $(i^*,j^*) \in \Y^+$, which by \textbf{A1} implies that
$(i,j^*) \in \Y^+$ for every $i \in \Y^+_{(1)}$. This together with
\eqref{ineq:j*} shows that $p(x_{2:n}|x_1,y_1)>0$ for every $y_1 \in
\Y^+_{(1)}$, and so \eqref{equiv:px} is proved in one direction. In
the other direction, if $i \notin \Y^+_{(1)}$, then, by definition
of $\Y^+$, $p_{ij}(x_{1:r})=0$ for every $j \in \Y$, which in turn
implies that $p(x_{2:n}|x_1,y_1=i)=0$.
\\\\
Next, note that $c(x_{r:n})>0$, and so $U[x_{1:n}]$ is well-defined.
Indeed, we saw above that $j^* \in \Y^+_{(2)}$ and therefore by
\eqref{ineq:j*} $c(x_{r:n}) \geq p(x_{r+1:n}|x_r,y_r=j^*) >0.$ When
$i \notin \Y^+_{(1)}$ then by \eqref{equiv:px}, $U[x_{1:n}](i,j)=
\lambda[x_{r:n}](j)$ for every $j\in \Y$ and hence the inequality
\eqref{ineq:Doeblin} is fulfilled for every $j \in \Y$. Thus in what
follows we assume that $i \in \Y^+_{(1)}$. We have
\begin{align*}
U[x_{1:n}](i,j)&={p(y_{r}=j,x_{2:n}|y_1=i,x_1)\over p(x_{2:n}|x_1,y_1=i)}\\
&={p(y_{r}=j,x_{2:r}|x_1,y_1=i)p(x_{r+1:n}|y_{r}=j,x_{r})\over p(x_{2:n}|y_1=i,x_1)}\\
&={p_{ij}(x_{1:r})p(x_{r+1:n}|y_{r}=j,x_{r})\over {\sum_{j' \in \Y}
p_{i j'}(x_{1:r})p(x_{r+1:n}|y_r=j',x_r)}}.
\end{align*}
By \textbf{A1} $p_{ij}(x_{1:r})>0$ if and only if $j \in
\Y^+_{(2)}$, and so we obtain
\begin{align*}
U[x_{1:n}](i,j)&={p_{ij}(x_{1:r})p(x_{r+1:n}|y_{r}=j,x_{r})
\mathbb{I}_{\Y^+_{(2)}}(j)\over {\sum_{j' \in \Y^+_{(2)}} p_{i
j'}(x_{1:r})p(x_{r+1:n}|y_r=j',x_r)}}.
\end{align*}
Together with \eqref{ineq:n0Bound} this implies
\begin{align*}
U[x_{1:n}](i,j)\geq  \dfrac{1}{n_0^2 \cdot c(x_{1:r})}
p(x_{r+1:n}|y_{r}=j,x_{r})
\mathbb{I}_{\Y^+_{(2)}}(j)={1\over n_o^2}\lambda[x_{1:n}](j).
\end{align*}
\end{proof}\\\\
{\bf Remark.} Inspired by the technique  in \cite{DMR}, one can add
to the condition {\bf A1}  the  following: there exists $t\in
\{2,\ldots,r-1\}$ and state $l\in \Y$ so that for every $x_{1:r}\in
E$ \begin{equation}\label{sopot} {p_{il}(x_{1:t})p_{l
j}(x_{t:r})\over p_{ij}(x_{1:r})}>0,\quad \forall i,j \in
\Y^+_{(1)}\times \Y^+_{(2)}.\end{equation}
Then, for every $i\in
\Y^+_{(1)}$,
\begin{align*}
{p(y_t=l,x_{2:n}|x_1,y_1=i)\over p(x_{2:n}|x_1,y_1=i)}
&={\sum_{j\in \Y^+_{(2)}}p_{il}(x_{1:t})p_{lj}(x_{t:r})p(x_{r+1:n}|y_r=j,x_r)\over \sum_{j' \in \Y_{(2)}} p_{ij'}(x_{1:r})p(x_{r+1:n}|x_r,y_r=j')}\\
&={{\sum_{j\in \Y^+_{(2)}}{p_{il}(x_{1:t})p_{lj}(x_{t:r})\over
p_{ij}(x_{1:r})} p_{ij}(x_{1:r})p(x_{r+1:n}|y_{r}=j,x_{r}) \over
\sum_{j' \in \Y_{(2)}} p_{i j'}(x_{1:r})p(x_{r+1:n}|x_r,y_r=j')}}\\
&\geq \min_{i,j\in \Y^+_{(1)}\times \Y^+_{(2)}}{p_{il}(x_{1:t})p_{l
j}(x_{t:r})\over p_{ij}(x_{1:r})}\DEF \lambda.\end{align*} Thus the
matrix of conditional probabilities
$V(i,j)=P(Y_{k+t}=j|X_{k+1:k+n}=x_{1:n},Y_{k+1}=i)$ could be defined
so that it satisfies: $V(i,l)\geq \lambda $, for every $i\in \Y$.
Thus, the Dobrushin condition holds with
$\lambda(j)=\mathbb{I}_{\{l\}}(j)$. Although formally (\ref{sopot})
restricts {\bf A1}, for many models like HMM, it is actually
equivalent to {\bf A1}. One advantage of (\ref{sopot}) is that
$\lambda$ might be bigger than $1/n^2_o$. The condition
(\ref{sopot}) is more useful in linear state space models
(continuous $\Y$), see \cite{DMR}.
\subsection{Exponential forgetting results}\label{subsec:ex}
\paragraph{Conditional transition matrices and distributions.} Let now, for every $m\geq 1$ and for every $k\geq 1$,
$$F_{k;m}[x_{1:n}]=(F_{k,m}[x_{1:n}](u,v))_{u\in \Y,v \in \Y^{m}}$$ be
the $|\Y|\times |\Y|^m$-matrix  such that
$$F_{k;m}[x_{1:n}](u,v)\DEF P(Y_{k+1:k+m}=v|X_{1:n}=x_{1:n},Y_{1}=u).$$
Observe that $\Y^m$ is countable and so every version of conditional
probability above is regular. Note that since the process $Z$ is
homogeneous, for every $1<s<n$ and $x_{1:n}$, we can take
\begin{equation}\label{abi}
F_{k;m}[x_{s:n}](u,v)=P(Y_{s+k:s+k+m-1}=v|X_{s:n}=x_{s:n},Y_{s}=u)=P(Y_{s+k:s+k+m-1}=v|X_{1:n}=x_{1:n},Y_{s}=u),
\end{equation}
where the last equality follows from Markov property.  Thus
$$F_{1;1}[x_{s:n}](u,v)=P(Y_{s+1}=v\mid X_{s:n}=x_{s:n},Y_s=u)=P(Y_{s+1}=v\mid X_{1:n}=x_{1:n},Y_s=u)$$ is the
one-step conditional transition matrix. The matrix
$F_{0;1}[x_{1:n}]\equiv I$, where $I$ stands for $|\Y|\times |\Y|$
identity matrix, and for $m>1$,
\begin{equation}\label{0m}
F_{0;m}[x_{1:n}](u,v)\DEF\left\{
                           \begin{array}{ll}
                             0, & \hbox{if $u\ne v_1$;} \\
                             F_{1;m-1}[x_{1:n}](v_1,v_{2:m}) & \hbox{if $u=v_1$.}
                           \end{array}
                         \right.
\end{equation}
The definition (\ref{0m}) is clearly justified, since if $m>1$ and
$u=v_1$, then
$$P(Y_1=v_1,Y_{2:m}=v_{2:m}|X_{1:n}=x_{1:n},Y_1=u)=P(Y_{2:m}=v_{2:m}|X_{1:n}=x_{1:n},Y_1=v_1).$$
Note that without loss of generality we may assume
$F_{r-1;1}[x_{1:n}] \equiv U[x_{1:n}]$.\\\\
For every $m\geq 1$ and $1\leq l\leq t,n$ define
$$
\nu^t_{l:n;m}[x_{l:n}](v)\DEF P(Y_{t:t+m-1} = v| X_{l:n}=x_{l:n}),\quad
v\in \Y^m. $$  The notation $\nu^t_{l:n;m}$ represents the random
function $\nu^t_{l:n;m}[X_{l:n}]$ taking values $[0,1]^{m}$. The domain of that function is finite and so we identify the random function with random vector. Observe that for
any $m, k,l, \geq 1$, $s \geq l $ and $n \geq s+k$ it holds
\begin{align*}
\nu^{s+k}_{l:n;m}(v)&=\mathbb{E}[P(Y_{s+k:s+k+m-1} =v | X_{l:n},Y_s)| X_{l:n}]=\mathbb{E}[F_{k;m}[X_{s:n}](Y_s,v)| X_{l:n}]\\
&=\sum_{u\in \Y} F_{k;m}[X_{s:n}](u,v)\nu^s_{l:n;1}[X_{l:n}](u)=(\nu^s_{l:n;1}F_{k;m}[X_{s:n}])(v),
\end{align*}
where the second equality follows from (\ref{abi}), and the third
equality follows from the fact that $\nu^s_{l:n;1}$ is a regular
conditional distribution. Thus
\begin{equation}\label{decomp}
\nu^{s+k}_{l:n;m}=\nu^s_{l:n;1}F_{k;m}[X_{s:n}],\quad{\rm a.s.}.
\end{equation}
In order to generalize the $\nu^t_{l:n;m}$ to the case $m=\infty$
corresponding to the conditional distribution of $Y_{t:\infty}$, let
${\cal F}$ stand for the cylindrical $\sigma$-algebra on
$\Y^{\infty}$. Now, for every $1\leq l\leq t,n$, let
$\nu^t_{l:n;\infty}$ be the regular version of the conditional
distribution $P(Y_{t:\infty}\in \cdot|X_{l:n})$ on $\sigma$-algebra
${\cal F}$.
\paragraph{Remark about a.s.}
The stochastic process $Z$ is defined on an underlying probability space $(\Omega, {\cal F}, {\bf P})$. (Regular) conditional probabilities  $\nu$ and $F$ are defined up to ${\bf P}$-a.s., only. Therefore, the (in)equalities like  (\ref{decomp}) or the statement (\ref{kpa}) of  Proposition \ref{prop:TVbound} below are all stated in terms of ${\bf P}$-a.s.. Observe also that there are countable many indexes $l,n,m,s,k$. Therefore (\ref{decomp}) implies: there exists $\Omega_o\subset \Omega$ such that ${\bf P}(\Omega_o)=1$ and for any $\omega\in \Omega_o$
(\ref{decomp}) holds for any $s,k,l,m,n$. The same holds for other similar equalities like (\ref{kpa}).
\paragraph{The main theorem.}
In what follows, we take $r'=r-1$; for any $t>r'$ and for any string
$x_{s:t}\in \X^{t-s+1}$, we define
\begin{align*}
&\tau_k\DEF \lfloor {t-k-s\over r'} \rfloor, \quad \mbox{and} \quad
\kappa_k(x_{s:t})\DEF \sum_{u=0}^{\tau_k-1}\mathbb{I}_{E}(x_{ur'+s+k:(u+1)r'+s+k}),\quad k=0,\ldots r'-1
\end{align*}
Thus $\kappa_0(x_{s:t})$ counts the number of vectors from set $E$
in the string $x_{s:t}$ in almost non-overlapping positions starting
from $s$. Here "almost non-overlapping positions" means that the
last entry of previous position and the first entry of the next one
overlap. Similarly, $\kappa_k(x_{s:t})$ counts the number of vectors
from set $E$ in the string $x_{s+k:t}$ $(k=0,\ldots, r'-1)$.
\\\\
 Let us also define reversed time counterpart of $\kappa_0$ as
follows
\begin{equation}\label{kappa'}
\bar{\kappa}(x_{s:t})\DEF \sum_{u=0}^{\tau_0-1}\mathbb{I}_{E}(x_{t-(u+1)r':
t-ur'}).
\end{equation}
Thus also $\bar{\kappa}(x_{s:t})$ counts the number of vectors from
set $E$ in the string $x_{s:t}$ in almost non-overlapping positions;
the difference with $\kappa$ is that  $\bar{\kappa}$ starts counting
from $t$. Note that  with $k=(t-s) \mod r'$
$\bar{\kappa}(x_{s:t})=\kappa_k(x_{s:t})$.
\begin{proposition}\label{prop:TVbound} Suppose \textbf{A1} is satisfied, $n \geq t \geq s \geq l \geq 1$ and $m \geq 1$.
Then with $\rho=1-n_0^{-2}$,  the following inequality holds for
every  $k=0,\ldots,r'-1$,
\begin{equation}\label{kpa}
\| \nu^{t}_{l:n;m}-\nu^{t}_{s:n;m}\|_{\rm TV} \leq
2\rho^{\kappa_k(X_{s:t})}, \quad \mbox{a.s.}
\end{equation}
Moreover, the inequality (\ref{kpa}) also holds when
$\kappa_k(X_{s:t})$ is replaced be  $\bar{\kappa}(X_{s:t}).$
\end{proposition}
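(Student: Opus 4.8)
The plan is to exploit the product decomposition \eqref{decomp} together with the submultiplicativity of the Dobrushin coefficient and the Doeblin bound of Lemma \ref{lem:Doeblin}. The key observation is that \eqref{kpa} compares two conditional distributions of $Y_{t:t+m-1}$ that differ only in how much of the past observation string is conditioned on ($X_{l:n}$ versus $X_{s:n}$). Both can be written, via \eqref{decomp}, as a row-vector times the \emph{same} block-transition matrix carrying the chain from time $s$ up to time $t$. Concretely, I would choose an integer $q$ so that $s+qr'$ lands at (or just below) $t$, and factor the conditional transition from $Y_s$ to $Y_{t}$ as a product of $q$ matrices of the form $F_{r';1}[X_{\cdot:n}]$, each advancing the index by $r'=r-1$ steps, followed by a short remainder block $F_{\cdot;m}[X_{\cdot:n}]$ that produces the $m$-block distribution at the end. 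Thus I can write, a.s.,
\begin{equation*}
\nu^{t}_{l:n;m}=\nu^{s}_{l:n;1}\,G\qquad\text{and}\qquad
\nu^{t}_{s:n;m}=\nu^{s}_{s:n;1}\,G,
\end{equation*}
where $G$ is the common block-product matrix, so that
\begin{equation*}
\|\nu^{t}_{l:n;m}-\nu^{t}_{s:n;m}\|_{\rm TV}
=\|(\nu^{s}_{l:n;1}-\nu^{s}_{s:n;1})\,G\|_{\rm TV}
\le 2\,\delta(G),
\end{equation*}
using the contraction property $\|\xi M-\xi'M\|_{\rm TV}\le\delta(M)\|\xi-\xi'\|_{\rm TV}\le 2\delta(M)$ recalled in Subsection \ref{sec:dobrushin}.

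\textbf{Next I would bound $\delta(G)$.} By submultiplicativity, $\delta(G)$ is at most the product of the Dobrushin coefficients of the individual block factors. For each factor whose observation sub-block $X_{ur'+s+k:(u+1)r'+s+k}$ lands in $E$, Lemma \ref{lem:Doeblin} shows that the corresponding transition matrix $U[\,\cdot\,]$ satisfies a Doeblin condition with minorizing constant $\e=n_0^{-2}$ and minorizing measure $\lambda[\cdot]$, hence its Dobrushin coefficient is at most $1-n_0^{-2}=\rho$. Factors whose block does not lie in $E$ contribute a coefficient at most $1$. Therefore, after discarding the trivial factors, $\delta(G)\le\rho^{N}$, where $N$ is exactly the number of almost-non-overlapping windows of length $r$ inside $X_{s:t}$ (offset by $k$) that fall into $E$—that is, $N=\kappa_k(X_{s:t})$. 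Combining with the contraction estimate gives the claimed bound $\|\nu^{t}_{l:n;m}-\nu^{t}_{s:n;m}\|_{\rm TV}\le 2\rho^{\kappa_k(X_{s:t})}$ a.s. For the final sentence, the reversed-time counting $\bar\kappa$ yields the same estimate because one may start the block decomposition from $t$ and count windows going backwards; the block-product factorization is unchanged, only the bookkeeping of which windows are counted differs, and $\bar\kappa=\kappa_k$ for $k=(t-s)\bmod r'$ as already noted before the statement.

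\textbf{The main obstacle} I anticipate is the careful handling of the alignment and the well-definedness of the matrices on a full-measure event. One must verify that the factorization $\nu^{t}_{l:n;m}=\nu^{s}_{l:n;1}\,G$ genuinely holds by iterating \eqref{decomp}, and that each factor $F_{r';1}[X_{ur'+s+k:n}]$ coincides a.s.\ with $U[X_{ur'+s+k:n}]$ so that Lemma \ref{lem:Doeblin} applies—this is exactly the normalization remark that $F_{r-1;1}[x_{1:n}]\equiv U[x_{1:n}]$, but it must be invoked at each shifted index using homogeneity via \eqref{abi}. A subtlety is that Lemma \ref{lem:Doeblin} requires the hypothesis $p(x_{2:n}|x_1,y_1=i^*)>0$ for some $i^*$; along the realized path this positivity holds a.s.\ because the conditioning event has positive density, so the Doeblin minorization is available at every window that lands in $E$. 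Finally, since there are only countably many index combinations $l,n,m,s,k$, all these a.s.\ identities and inequalities can be made to hold simultaneously on a single event $\Omega_o$ of full probability, as noted in the ``Remark about a.s.''\ paragraph; I would state the conclusion on that event.
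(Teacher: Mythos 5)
Your proposal is correct and follows essentially the same route as the paper's own proof: both apply \eqref{decomp} to write the two smoothing distributions as row vectors multiplied by the common block matrix $F_{t-s;m}[X_{s:n}]$, factor that matrix as $F_{k;1}[X_{s:n}]\,U[X_{s+k:n}]\cdots U[X_{s+k+(\tau_k-1)r':n}]$ times a remainder block, and then combine the contraction property with submultiplicativity of the Dobrushin coefficient and the Doeblin bound $\delta(U)\leq\rho$ from Lemma \ref{lem:Doeblin} on windows landing in $E$ (with the a.s.\ positivity $p(X_{1:n})>0$ guaranteeing the lemma's hypothesis). Your treatment of the $\bar\kappa$ case via the identity $\bar\kappa(x_{s:t})=\kappa_k(x_{s:t})$ for $k=(t-s)\bmod r'$ is also exactly the paper's argument.
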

\begin{proof} By (\ref{decomp}),
\begin{align*}
\nu^t_{l:n;m}=\nu^{s}_{l:n;1}F_{t-s;m}[X_{s:n}],\quad \nu^t_{s:n;m}=\nu^s_{s:n;1}F_{t-s;m}[X_{s:n}].
\end{align*}
Note that
$$F_{t-s;m}[X_{s:n}]=F_{k;1}[X_{s:n}]
U[X_{s+k:n}]U[X_{s+k+r':n}] \cdots
U[X_{s+k+(\tau_k-1)r':n}]F_{t-s-k-\tau_k r';m}[X_{s+k+\tau_k r':n}].$$
Thus
\begin{align*}
&\| \nu^t_{l:n;m}-\nu^t_{s:n;m}\|_{\rm TV}\\
&=\|(\nu^s_{l:n;1}-\nu^s_{s:n;1})F_{k;1}[X_{s:n}]U[X_{s+k:n}]U[X_{s+k+r':n}] \cdots
U[X_{s+k+(\tau_k-1)r':n}]F_{t-s-k-\tau_k r';m}[X_{s+k+\tau_k r':n}] \|_{\rm TV} \\
&\leq  2 \delta(U[X_{s+k:n}])\delta(U[X_{s+k+r':n}]) \cdots
\delta(U[X_{s+k+(\tau_k-1)r':n}]),
\end{align*}
where $\delta(U)$ denotes the Dobrushin coefficient of matrix $U$.
Note that if  $x_{1:n}$ is such that  $p(x_{1:n})>0$, then for any
$u=1,\ldots,n-1 $ there exists a state $y_u$ such that
$p(x_{u+1:n}|x_u,y_u)>0$ so that the assumption of Lemma
 \ref{lem:Doeblin} is fulfilled. Since $p(X_{1:n})>0$, a.s., we have
 by  Lemma
 \ref{lem:Doeblin}
 $$\delta(U[X_{s+k+ur':n}]) \leq
\rho \mathbb{I}_{E}(X_{s+k+ur':s+k+(u+1)r'})\quad u=0,\ldots, \tau_k-1,$$
and
so the statement follows.\\\\
Since for some $k$, $\bar{\kappa}[x_{s:t}]=\kappa_k[x_{s:t}]$, we
have that $\max_k \kappa_k[X_{s:n}]\geq \bar{\kappa}[X_{s:t}]$ and
so the second statement follows.
\end{proof}
\\\\
We are now ready to prove the first of the two main results of the
paper. Recall that we do not assume any specific initial distribution $\pi$ of the chain $Z$, hence all a.s.- statements below are with respect to the measure ${\bf P}$ in underlying probabilty space.
\begin{theorem} \label{th:forgetting} Assume \textbf{A1}-\textbf{A2} and let  $Z$ be Harris recurrent.
 \begin{enumerate}[label=(\roman*)]
\item Then for all $l, s\geq 1$
\begin{equation*}
\sup_{n \geq t}\sup_{m\geq 1}\| \nu^t_{l:n;m}-\nu^t_{s:n;m}\|_{\rm
TV} \xrightarrow[t]{} 0, \quad \mbox{a.s.}
\end{equation*}
\item If $Z$ is positive Harris, then there exists a constant $1>\alpha>0$ such that the following holds: for every $s \geq 1$ there exist a
$\sigma(X_{s:\infty})$-measurable
 random variable $C_s< \infty$ such that for all  $t \geq s \geq l \geq 1$
\begin{equation}\label{bound1}
\sup_{n\geq t}\sup_{m\geq 1}\| \nu^t_{l:n;m}-\nu^t_{s:n;m} \|_{\rm
TV} \leq C_s \alpha^{t-s}, \quad \mbox{a.s.}
\end{equation}
\end{enumerate}
\end{theorem}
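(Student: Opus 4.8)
The plan is to reduce both claims to the growth of the block counter $\max_{0 \leq k \leq r'-1}\kappa_k(X_{s:t})$ and then feed this into Proposition \ref{prop:TVbound}. The crucial observation is that the bound $2\rho^{\kappa_k(X_{s:t})}$ supplied by that proposition is completely free of $n$, $m$ and $l$; hence, optimizing over $k$,
$$\sup_{n\geq t}\sup_{m\geq 1}\|\nu^t_{l:n;m}-\nu^t_{s:n;m}\|_{\rm TV} \leq 2\rho^{\max_{0\leq k\leq r'-1}\kappa_k(X_{s:t})}, \quad \text{a.s.},$$
where, since there are only countably many indices, a single null set serves for all $t$ simultaneously (as in the remark about a.s.). A short combinatorial check shows that as $k$ runs over $\{0,\dots,r'-1\}$ and $u$ over $\{0,\dots,\tau_k-1\}$ the starting positions $ur'+s+k$ run through the integers in $[s,t-r']$ (each at most once), so $\sum_{k=0}^{r'-1}\kappa_k(X_{s:t})$ equals, up to $O(r')$ boundary terms, the total number of length-$r$ windows of $X_{s:t}$ that fall into $E$; consequently $\max_k \kappa_k(X_{s:t}) \geq (r')^{-1}\sum_{j}\mathbb{I}_E(X_{j:j+r-1})$.

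Everything thus comes down to the frequency of $E$-windows. First I would produce a recurrent block set: put $A \DEF \{z_{1:r}: x_{1:r}\in E,\ y_1 \in \Y^+_{(1)}\}$, so that $A_{(1)}=E_{(1)}\times \Y^+_{(1)}$ and, for $z_1=(x_1,y_1)\in A_{(1)}$, one has $A(z_1)=\{z_{2:r}: x_{2:r}\in E(x_1)\}$. Using \textbf{A1} (which guarantees, for $x_{1:r}\in E$ and $y_1\in \Y^+_{(1)}$, a state $j\in\Y^+_{(2)}$ with $p_{y_1 j}(x_{1:r})>0$, hence $p(x_{2:r}\mid x_1,y_1)>0$ on $E(x_1)$) together with $\mu^{r-1}(E(x_1))>0$ from \textbf{A2}, I get $P(Z_{2:r}\in A(z_1)\mid Z_1=z_1)>0$ for every $z_1\in A_{(1)}$; since $\psi(A_{(1)})>0$ by \textbf{A2}, formula (\ref{moot}) yields $\psi_r(A)>0$. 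For part (i) it then suffices that the $r$-block chain $\ZZ$, which inherits Harris recurrence from $Z$, visits $A$ infinitely often a.s.; each visit is an $E$-window, so $\max_k\kappa_k(X_{s:t})\to\infty$ and the right-hand side $2\rho^{\max_k\kappa_k}\to 0$.

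For the quantitative statement (ii) I would upgrade ``infinitely often'' to a positive asymptotic frequency. In the positive Harris case $\ZZ$ has stationary law $\pi_r=\mathrm{Law}_\pi(Z_{1:r})\sim\psi_r$, so $\gamma\DEF P_\pi(X_{1:r}\in E)\geq \pi_r(A)>0$. The strong law of large numbers for positive Harris chains (valid for arbitrary initial law and not requiring aperiodicity) gives $(t-s)^{-1}\sum_{j}\mathbb{I}_E(X_{j:j+r-1})\to\gamma$ a.s., whence $\max_k\kappa_k(X_{s:t})\geq \tfrac{\gamma}{2r'}(t-s)$ for all $t$ past some a.s.\ finite, $\sigma(X_{s:\infty})$-measurable random time. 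Setting $\alpha\DEF\rho^{\gamma/(2r')}\in(0,1)$ and $C_s\DEF\sup_{t\geq s}2\rho^{\max_k\kappa_k(X_{s:t})}\alpha^{-(t-s)}$, the tail terms are bounded by $2$ while only finitely many pre-threshold terms occur, so $C_s<\infty$ a.s.; because $\kappa_k(X_{s:t})$ and the ergodic limit depend on $X$ only through $X_{s:\infty}$, $C_s$ is $\sigma(X_{s:\infty})$-measurable, and (\ref{bound1}) follows.

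The main obstacle I anticipate is the recurrence bookkeeping rather than the analysis: verifying $\psi_r(A)>0$ cleanly from \textbf{A1}--\textbf{A2} through (\ref{moot}), and --- in (ii) --- arguing that the constant $C_s$ built from the pathwise bound is simultaneously finite and $\sigma(X_{s:\infty})$-measurable. Both hinge on the feature, already encoded in Proposition \ref{prop:TVbound}, that the bound sees the observations only on the window $X_{s:t}$, so that the frequency of $E$-windows is a genuine tail functional of $X_{s:\infty}$; the only additional care needed is confirming that Harris recurrence of $Z$ transfers to the block chain $\ZZ$ in part (i), where positivity --- and hence Proposition \ref{block} --- is not assumed.
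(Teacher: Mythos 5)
Your proof is correct, and for part (ii) it takes a genuinely different and leaner route than the paper. For part (i) the two arguments coincide in substance: the positivity you verify on $A=\{z_{1:r}: x_{1:r}\in E,\ y_1\in\Y^+_{(1)}\}$ (namely $\psi(A_{(1)})>0$ and $P(Z_{2:r}\in A(z_1)\mid Z_1=z_1)>0$ on $A_{(1)}$) is exactly the hypothesis of Lemma \ref{lem:io} applied to the original chain $Z$ with $W=A$, which yields $Z_{l:l+r-1}\in A$ infinitely often without ever invoking Harris recurrence of the block chain $\ZZ$ --- so the caveat you flag about Proposition \ref{block} not being available under mere Harris recurrence dissolves, since you never actually need it; this is precisely how the paper argues. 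The real divergence is in part (ii). The paper controls the growth of the single counter $\kappa_0$: it takes the cyclic decomposition $\{D_0,\dots,D_{d-1}\}$ of $\ZZ$, waits for the random time $T$ of the first visit to $D_0$, uses aperiodicity of the process on $D_0$ plus the skeleton theorem \cite[Th.\ 9.1.6]{MT} to make the $dr'$-spaced subsampled chain $\{\ZZ_{s+T+kdr'}\}$ positive Harris, and only then applies the SLLN. You instead apply the SLLN \cite[Th.\ 17.1.7]{MT} directly to the overlapping block chain $\ZZ$ (positive Harris by Proposition \ref{block}, with $\pi_r(E\times\Y^r)\geq\pi_r(A)>0$ since $\pi_r\sim\psi_r$; no aperiodicity is needed for the SLLN), and convert the count of \emph{all} $E$-windows into a lower bound on $\max_k\kappa_k(X_{s:t})$ by the pigeonhole identity $\sum_{k=0}^{r'-1}\kappa_k(X_{s:t})=\sum_{j=s}^{t-r'}\mathbb{I}_E(X_{j:j+r'})$; since Proposition \ref{prop:TVbound} holds for every residue $k$ on a single null set's complement, the max is usable, and your $C_s\DEF\sup_{t\geq s}2\rho^{\max_k\kappa_k(X_{s:t})}\alpha^{-(t-s)}$ is manifestly $\sigma(X_{s:\infty})$-measurable and a.s.\ finite. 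This bypasses the cyclic decomposition, the stopping time, and the skeleton theorem entirely, which is a genuine simplification for this theorem. What the paper's heavier machinery buys is control of a \emph{specific} counter ($\kappa_0$, or the backward counter $\bar{\kappa}$) rather than only the maximum over residue classes: that refinement is exactly what is recycled in the proof of the two-sided Theorem \ref{thm2}, where one needs $\bar{\kappa}(X_{-s:0})$ to grow with a constant measurable with respect to $\sigma(X_{-\infty:0})$ and independent of $s$, something your pigeonhole bound does not deliver directly.
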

\begin{proof}
\textbf{(i)} First we show that
\begin{align} \label{ineq:Pos}
P_z(X_{1:r} \in E)>0, \quad \forall z \in E_{(1)} \times \Y^+_{(1)}.
\end{align}
Recall that we denoted $E(x_1) = \{x_{2:r} \: | \: x_{1:r} \in E\}$.
We have for any $(x_1,i) \in E_{(1)} \times \Y^+_{(1)}$
\begin{align*}
P_{(x_1,i)}(X_{1:r} \in E)&= \int_{E(x_1)} \sum_{j \in \Y} p_{ij}(x_{1:r}) \, \mu^{r-1}(dx_{2:r})\geq  \int_{E(x_1)} \sum_{j \in \Y^+_{(2)}} \dfrac{1}{n_0}\, \mu^{r-1}(dx_{2:r})
=\dfrac{|\Y^+_{(2)}|}{n_0} \mu^{r-1}(E(x_1))>0,
\end{align*}
and so \eqref{ineq:Pos} holds. Here the first inequality follows
from \textbf{A1} and \eqref{ineq:n0Bound}, and the second inequality
follows from \textbf{A2}. Since by \textbf{A2} $\psi(E_{(1)} \times
\Y^+_{(1)})>0$, then it follows from Lemma \ref{lem:io} and
\eqref{ineq:Pos} that $X$ goes through $E$ infinitely often a.s.
Assuming for the sake of concreteness that $s \geq l$, we have that
there must exist $T(s) \in \{1, \ldots,r'\}$ such that
$\kappa_0(X_{s+T:s+T+u}) \xrightarrow[u]{} \infty$, a.s.. Thus, as
$u\to \infty$, we have $\max_{k\in \{0,\ldots,r'-1\}}
\kappa_k(X_{s:u})\to \infty$ and so the first part of the statement
follows from Proposition \ref{prop:TVbound}.
\\\\
\textbf{(ii)} Define $\ZZ_{k}=Z_{k:k+r-1}$, $k \geq 1$. From
Proposition \ref{block} we know that $\ZZ$ is a positive Harris
chain  with maximal irreducibility measure $\psi_r$. Recall that the
chain $\ZZ$ admits a cyclic decomposition $\{D_k, k=0,\ldots,
d-1\}$, where $d$ denotes the period of $\ZZ$. Also recall that by
\textbf{A2} $\psi(E_{(1)} \times \Y^+_{(1)})>0$; hence by (\ref{moot}) and
\eqref{ineq:Pos} $\pp(E \times \Y^r)>0$. Thus with no loss of
generality we may assume that
\begin{align} \label{ineq:Epp}
\pp\Big(D_0 \cap (E \times \Y^r)\Big)>0.
\end{align}
Let $s \geq 1$, and let $T(s) \geq 0$ be a
$\sigma(X_{s:\infty})$-measurable integer-valued random variable
defined as the smallest integer such that $\ZZ_{s+T} \in D_0$. Since
$\ZZ$ is Harris recurrent, then $T<\infty$, a.s.. We have thus by
the strong Markov property that $\{\ZZ_{s+T+k}\}_{k \geq 0}$ is a
Markov chain with the same transition kernel as $\ZZ$, hence also
positive Harris. Also, by the cyclic decomposition of $\ZZ$ and by
the fact that $\ZZ_{s+T} \in D_0$, we have that the Markovian
sub-process $\{\ZZ_{s+T+kd}\}_{k \geq 0}$ can be seen seen as the
{\it process $\ZZ$ on $D_0$}, i.e. as a process that starts from
$\ZZ_{s+T}$, the next value is the one of $\ZZ$ at the next visit of
$D_0$ and so on. With this observation, it is easy to see that
$\{\ZZ_{s+T+kd}\}_{k \geq 0}$ is $\pp\mid_{D_0}$-irreducible
($\pp\mid_{D_0}$ stands for restriction), positive Harris (if
$\pp\mid_{D_0}(A)>0$, then also $\pp(A)>0$ and so for every $z_1\in
D_0$, $P(\ZZ_k\in A, \quad {\rm i.o}\mid Z_1=z_1)=1$, so it is
Harris; since the restriction of invariant probability measure of
$\ZZ$ to $D_0$ is the invariant measure of $\ZZ$ on $D_0$,
\cite[Th.10.4.9]{MT}, we see that the process on $D_0$ has a
positive invariant measure) and aperiodic (aperiodicity here follows
from the fact that $d$ is defined as the largest cycle length
possible). It then follows  \cite[Th. 9.1.6]{MT} that the Markov
chain $\{\ZZ_{s+T+k d r'}\}_{k \geq 0}$ is positive Harris, having
the same invariant probability measure as the process
$\{\ZZ_{s+T+kd}\}_{k \geq 0}$. This invariant probability measure is
the one of $\ZZ$ conditioned on the set $D_0$, hence $P_{\pi}(\ZZ_1
\in \cdot\mid \ZZ_1\in D_0)$, where $\pi$ denotes the invariant
distribution of $Z$. Define
\begin{align*}
S(n)\DEF\sum_{k=0}^{n-1} \mathbb{I}_{E \times \Y^r}(\ZZ_{s+T+k d r'})
\quad \mbox{and} \quad p_0\DEF P_\pi(\ZZ_1 \in (E \times
\Y^r)\mid \ZZ_1\in D_0)={P_\pi(\ZZ_1 \in  D_0 \cap (E \times
\Y^r))\over P_{\pi}(\ZZ_1\in D_0)}.
\end{align*}
Since the invariant measure $P_{\pi}(\ZZ_1 \in \cdot)$ dominates the
maximal irreducibility measure $\pp$ \cite[Prop. 10.1.2(ii)]{MT},
then by \eqref{ineq:Epp}, $P_\pi(\ZZ_1 \in  D_0 \cap (E \times
\Y^r))>0$ and so $p_0>0$. By SLLN for positive Harris chains
\cite[Th. 17.1.7]{MT}
\begin{align} \label{conv:SLLN}
\lim_n\dfrac{1}{n}S(n)=p_0, \quad \mbox{a.s.}
\end{align}
We have for all $u \geq 0$
\begin{align} \label{ineq:kap}
\kappa_0(X_{s+T:s+T+u}) \geq S(\lfloor u/(dr') \rfloor).
\end{align}
To see (\ref{ineq:kap}) note that by the definition of $\ZZ$,
$$\ZZ_{s+T+(n-1)dr'}=Z_{s+T+(n-1)dr':s+T+((n-1)d+1)r'}.$$ Thus, when
$n\leq {u\over d r'}$, then $(n-1)d\leq (u/r'-1)$ and
$$s+T+\big((n-1)d+1\big)r'\leq s+T+u.$$
By (\ref{conv:SLLN}), for every $0<p<{p_0\over dr'}$, there exists a
$\sigma(X_{s:\infty})$-measurable finite random variable $U$
(depending on $s$ and $p$) such that for all $k \geq 0$,
\begin{align*}
S\Big(\lfloor {U+k\over dr'} \rfloor \Big)> (p dr'){(U+k)\over
dr'}\quad \Rightarrow \quad \kappa_0(X_{s+T:s+T+U+k})>(U+k)p.
\end{align*}
Therefore, if $t\geq s+T+U$, by taking $k=t-(s+T+U)$, we have
$$\kappa_0(X_{s:t})\geq
\kappa_0(X_{s+T:t})=\kappa_0(X_{s+T:s+T+U+k})\geq (U+k)p=p(t-s-T).$$
If $t<s+T+U$, then $T+U>t-s$ and defining $\alpha \DEF \rho^p$ and
$N\DEF T+U$, we have that by Proposition \ref{prop:TVbound} for any
$t \in \{s, \ldots,n\}$ the following inequalities hold  a.s.
\begin{align*}
\| \nu^t_{l:n;m}-\nu^t_{s:n;m} \|_{\rm TV}& \leq
2\rho^{\kappa_0(X_{s:t})}\leq 2\rho^{p(t-s-T)\mathbb{I}(N \leq t-s)}
\leq 2\alpha^{-N} \cdot \alpha^{t-s}.
\end{align*} So the statement holds with $C_s\DEF 2\alpha^{-2N}$.  \end{proof}
\begin{corollary}\label{korra}
 Assume \textbf{A1}-\textbf{A2} and let  $Z$ be Harris recurrent.
\begin{enumerate}[label=(\roman*)]
\item Then for all $l, s \geq 1$
\begin{equation}\label{cor1}
\lim_t\sup_{n\geq t}\|P(Y_{t:\infty}\in
\cdot|X_{l:n})-P(Y_{t:\infty}\in \cdot|X_{s:n})\|_{\rm TV}=0,\quad
\text{a.s.}\end{equation}
\item If $Z$ is positive Harris, then there exists a constant $1>\alpha>0$ such that the following holds: for every $s \geq 1$ there exist a
$\sigma(X_{s:\infty})$-measurable
 random variable $C_s< \infty$ such that for all $t\geq s \geq l \geq 1$
\begin{equation}\label{bound11}
\sup_{n\geq t}\|P(Y_{t:\infty}\in \cdot|X_{l:n})-P(Y_{t:\infty}\in
\cdot|X_{s:n}) \|_{\rm TV} \leq C_s \alpha^{t-s}, \quad \mbox{a.s.}
\end{equation}
\end{enumerate}
\end{corollary}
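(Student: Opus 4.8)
The plan is to reduce the infinite-block statement to the already-proved finite-block Theorem \ref{th:forgetting} by exploiting the fact that the cylindrical $\sigma$-algebra $\mathcal{F}$ on $\Y^\infty$ is generated by the algebra of finite-dimensional cylinders, so that total variation distance on $\Y^\infty$ is governed by the total variation distances of the finite-dimensional marginals. No new probabilistic input beyond Theorem \ref{th:forgetting} should be required; the work is purely in transferring the finite bound to the projective limit.

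First I would record the underlying measure-theoretic identity. For any two probability measures $P,Q$ on $(\Y^\infty,\mathcal{F})$, writing $P_m,Q_m$ for their push-forwards under the projection $\Pi_m$ onto the first $m$ coordinates, one has
\begin{equation*}
\|P-Q\|_{\rm TV}=\sup_{m\geq 1}\|P_m-Q_m\|_{\rm TV}.
\end{equation*}
This follows from Hahn--Jordan together with the standard approximation theorem: $\|P-Q\|_{\rm TV}=2\sup_{A\in\mathcal{F}}(P-Q)(A)$, the finite cylinders form a generating algebra, every $A\in\mathcal{F}$ is approximable in $(P+Q)$-measure by cylinders, and the supremum over cylinders based on the first $m$ coordinates equals $\tfrac{1}{2}\|P_m-Q_m\|_{\rm TV}$. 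Since cylinders on the first $m$ coordinates are also cylinders on the first $m+1$ coordinates, $\|P_m-Q_m\|_{\rm TV}$ is nondecreasing in $m$, so the supremum is in fact a limit.

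Second I would identify these marginals with the finite-block objects already controlled. By construction $\nu^t_{l:n;\infty}$ is a regular version of $P(Y_{t:\infty}\in\cdot\mid X_{l:n})$ and $\nu^t_{l:n;m}$ is a regular version of $P(Y_{t:t+m-1}\in\cdot\mid X_{l:n})$; since $Y_{t:t+m-1}=\Pi_m(Y_{t:\infty})$, the $m$-th marginal of $\nu^t_{l:n;\infty}$ coincides with $\nu^t_{l:n;m}$, ${\bf P}$-a.s. As in the ``Remark about a.s.'', there are only countably many indices $l,s,n,m$, so these identifications hold simultaneously on a single set $\Omega_o$ of full probability. Applying the first step pathwise on $\Omega_o$ yields
\begin{equation*}
\|\nu^t_{l:n;\infty}-\nu^t_{s:n;\infty}\|_{\rm TV}=\sup_{m\geq 1}\|\nu^t_{l:n;m}-\nu^t_{s:n;m}\|_{\rm TV},\quad {\bf P}\text{-a.s.}
\end{equation*}

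Finally, taking $\sup_{n\geq t}$ on both sides and invoking Theorem \ref{th:forgetting} gives the corollary: part (i) follows from Theorem \ref{th:forgetting}(i), and part (ii) from the bound $\sup_{n\geq t}\sup_{m\geq 1}\|\nu^t_{l:n;m}-\nu^t_{s:n;m}\|_{\rm TV}\leq C_s\alpha^{t-s}$, with the same $\alpha$ and the same $\sigma(X_{s:\infty})$-measurable constant $C_s$. The only delicate point, and the one I expect to require care, is the bookkeeping of a.s.\ exceptional sets: the total variation identity is a deterministic fact applied to each realization, but one must verify that the regular conditional distributions $\nu^t_{l:n;\infty}$ can be chosen so that their finite marginals match $\nu^t_{l:n;m}$ consistently across all $m$ on a common full-measure event. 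This is exactly the countability observation already used for \eqref{decomp}, so it poses no genuine difficulty once stated precisely.
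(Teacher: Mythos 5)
Your proposal is correct and follows essentially the same route as the paper: the paper also reduces to Theorem \ref{th:forgetting} by noting that the total variation distance on $\mathcal{F}=\sigma(\mathcal{A})$ equals the supremum over the generating cylinder algebra $\mathcal{A}$ (which it justifies in one line by the monotone class theorem, exactly the identity you prove via Hahn--Jordan and approximation by cylinders). Your added care about identifying the finite-dimensional marginals of $\nu^t_{l:n;\infty}$ with $\nu^t_{l:n;m}$ on a common full-measure set is a point the paper glosses over, but it is the same argument.
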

\begin{proof} Let ${\cal A}$ be the  algebra consisting of all cylinders of $\Y^{\infty}$. Thus ${\cal F}=\sigma({\cal A})$.   The statement (i) of Theorem \ref{th:forgetting}
means that for ${\bf P}$-a.s.,
\begin{equation*}\label{y}
\sup_{n\geq t}\sup_{A\in {\cal
A}}|\nu^t_{l:n;\infty}(A)-\nu^t_{s:n;\infty}(A)|\to
0.\end{equation*} Since for every two probability measures $P$ and
$Q$ on ${\cal F}=\sigma({\cal A})$, it holds by monotone class theorem that $$\sup_{A\in {\cal
A}}|P(A)-Q(A)|=\sup_{F\in \sigma({\cal A})}|P(F)-Q(F)|,$$ we have as
$t\to \infty$, $$\sup_{n\geq t}\|P(P_{t:\infty}\in
\cdot|X_{l:n})-P(Y_{t:\infty}\in \cdot|X_{s:n})\|_{\rm
TV}=\sup_{n\geq t}\sup_{F\in {\cal
F}}|\nu^t_{l:n;\infty}(F)-\nu^t_{s:n;\infty}(F)|\to 0,\quad {\rm
a.s.}$$ The proof of the second statement is the same.
\end{proof}
\\\\
By Levy martingale convergence theorem, for every $l$, $t$ and $F\in
{\cal F}$
$$\lim_n P(Y_{t:\infty}\in F
|X_{l:n})=P(Y_{t:\infty}\in F|X_{l:\infty}),\quad {\rm a.s.}.$$
Since ${\cal F}$ is countable generated, then  (\ref{cor1}) implies by Dynkin $\pi-\lambda$ theorem
\begin{equation}\label{mutter}
\lim_t\|P(Y_{t:\infty}\in \cdot|X_{l:\infty})-P(Y_{t:\infty}\in
\cdot|X_{s:\infty})\|_{\rm TV}=0.\end{equation}
Similarly, from (\ref{bound1}), it follows ($s\geq l$)
\begin{equation}\label{vater}
\|P(Y_{t:\infty}\in \cdot|X_{l:\infty})-P(Y_{t:\infty}\in
\cdot|X_{s:\infty}) \|_{\rm TV} \leq C_s \alpha^{t-s}, \quad
\mbox{a.s.}
\end{equation}

{\bf Remark.} Suppose we have two different initial distributions,
say $\pi$ and $\tilde \pi$ of $Z_1$, where $\tilde{\pi}\succ\pi $
(to avoid zero-likelihood observations). Let $\nu^t_{s:n;m}$ and
$\tilde{\nu}^t_{s:n;m}$ ($1\leq s\leq t\leq n$) be the corresponding
smoothing distributions. It is easy to see that the very proof of
Proposition \ref{prop:TVbound} yields for every $k=0,\ldots r'-1$,
\begin{equation}\label{tilde}
\|\nu_{s:n;m}^t-\tilde{\nu}^t_{s:n;m}\|_{\rm TV}\leq
2\rho^{\kappa_k(X_{s:t})},\quad P_{\pi}-{\rm  a.s.}
\end{equation}
Therefore also the statements of Corollary \ref{korra} hold: under
{\bf A1} and {\bf A2}, if $Z$ is Harris recurrent, then as $t\to
\infty$,
\begin{equation}\label{tilde1}
\sup_{n\geq t}\|P_{\pi}(Y_{t:\infty}\in
\cdot|X_{s:n})-P_{\tilde{\pi}}(Y_{t:\infty}\in \cdot|X_{s:n})\|_{\rm
TV}\to 0,\quad  P_{\pi}-{\rm a.s.}.
\end{equation}
As mentioned in Introduction, for $n=t$, such convergences -- {\it
filter stability} -- are studied by van Handel {\it et al.} in
series of papers \cite{TvH12,vH09a,vH09b,vH08,vH12}. If $Z$ is
positive Harris, then there the convergence above holds in
exponential rate, i.e. there exists an almost surely finite random
variable $C_s$ and $\alpha\in(0,1)$ so that
\begin{equation}\label{tilde2}
\sup_{n\geq t}\|P_{\pi}(Y_{t:\infty}\in
\cdot|X_{s:n})-P_{\tilde{\pi}}(Y_{t:\infty}\in \cdot|X_{s:n})\|_{\rm
TV} \leq K_s \alpha^{t},\quad  P_{\pi}-{\rm a.s.},
\end{equation}
where $K_s=C_s\alpha^{-s}$. Of course,  just like in (\ref{mutter}) and (\ref{vater}), we have
that (\ref{tilde1}) and (\ref{tilde2}) also hold with $n=\infty$.
The  convergence (\ref{tilde1}) with $n=\infty$ is studied by
van Handel in \cite{vH09b} (mostly in HMM setting) under the name
{\it weak ergodicity of Markov chain in random environment.}
\subsection{Two-sided forgetting}
\paragraph{A1 and A2 under stationarity.}
In this section we consider a two-sided stationary extension of $Z$,
namely $\{Z_k\}_{\in \mathbb{Z}}=\{(X_k,Y_k)\}_{k \in \mathbb{Z}}$. As previously, the process is defined on underlying probability space $(\Omega,{\cal F},{\bf P})$, but in this section,
the measure ${\bf P}$ is such that the process $Z$ is stationary. All a.s. statements are with respect to ${\bf P}$.
Denote for $n \geq 1$ and $x_{1:n}\in \X^n$,
\begin{align*}
\Y^*(x_{1:n})\DEF \{(y_{1},y_n) \:|\exists y_{2:n-1}: \: p(x_{1:n},y_{1:n})>0\}.
\end{align*}
In the stationary case it is convenient to replace \textbf{A1} and
\textbf{A2} with the following conditions.
\begin{description} \item[A1'] There exists a set $E \subset \X^r$, $r>1$, such that $\Y^* \DEF \Y^*(x_{1:r}) \neq \emptyset$ is the same for any $x_{1:r} \in E$, and $\Y^*=\Y^*_{(1)} \times \Y^*_{(2)}$.
\item[A2'] It holds $\mu^r(E)>0$.
\end{description}
It is easy to see that $\bf{A1'}$ and $\bf{A2'}$ imply
\begin{align} \label{ineq:Epos}
P(X_{1:r} \in E)>0.
\end{align}
Let us compare conditions {\bf A1} and ${\bf A1'}$. Suppose $E$ is
any set such that $\Y^*(x_{1:r})$ is the same for any $x_{1:r}\in E$
and also $\Y^+(x_{1:r})$ is the same for any $x_{1:r}\in E$. Then
clearly $\Y^*_{(1)}\subset \Y^+_{(1)}$, and these sets are equal, if
for every $i\in \Y^+_{(1)}$, there exists $x_1\in E_{(1)}$ such that
$p(x_1,i)>0$ (recall that we consider stationary case, thus
$p(x_1,i)=p(x_t,y_t=i)$ for any $t$). Therefore, if there exists
$i\in \Y^+_{(1)}\setminus \Y^*_{(1)}$, then $p(x_1,i)=0$ for every
$x_1\in E_{(1)}$. This implies that such a state $i$ almost never
occurs  together with an observation $x_1$ from $E_{(1)}$ and
without loss of generality we can leave such states out of
consideration. Indeed, recall the proof of Proposition
\ref{prop:TVbound}, where for given $x_{s:n}$, we calculated, for
any $k=0,\ldots, r'-1$,
$$
\nu_{s:n;m}^t=\nu^s_{s:n;1}[x_{s:n}]F_{k;1}[x_{s:n}]U[x_{s+k:n}]U[x_{s+k+r':n}]
\cdots U[x_{s+k+(\tau_k-1)r':n}]F_{t-s-k-\tau_k r';m}[x_{s+k+\tau_k
r':n}]
$$
so that for any $v\in \Y^m$ and for any $k$
\begin{align*}
&\nu_{s:n;m}^t(v)=\sum_{i_0,i_1,i_2\cdots i_{\tau_k}}
p(y_{s+k}=i_0|x_{s:n})p(y_{s+k+r'}=i_1|y_{s+k}=i_0,x_{s+k:n})p(y_{s+k+2r'}=i_2|y_{s+k+r'}=i_1,x_{s+k+r':n})\cdots\\
&\cdots p(y_{s+k+\tau_k
r'}=i_{\tau_k}|y_{s+k+(\tau_k-1)r'}=i_{\tau_k-1},x_{s+k+(\tau_k-1)r':n})p(y_{t:t+m-1}=v|y_{s+k+\tau_k
r'}=i_{\tau_k}, x_{s+k+{\tau}_k r':n}).
\end{align*}
Now observe: when $p(x_{s:n})>0$, but $p(x_{s+k},y_{s+k}=i_0)=0$,
then also $p(y_{s+k}=i_0\mid x_{s:n})=0$, and such $i_0$ could be
left out from summation. Similarly, if, for a $l=1,\ldots,\tau_k-1$,
$p(x_{s+k+lr'},y_{s+k+lr'}=i_l)=0$, we have that
$p(y_{s+k+lr'}=i_l|y_{s+k+(l-1)r'}=i_{l-1},x_{s+k+(l-1)r':n})=0$ and
such $i_l$ can left out from summation. Therefore, in what follows,
without loss of generality, we shall assume $\Y^+_{(1)}=\Y^*_{(1)}$.
As the following proposition shows, in this case the conditions
${\bf A1,A2}$ and ${\bf A1',A2'}$ are equivalent.
\begin{proposition} Let $Z$ be stationary. Then ${\bf
A1,A2}$ implies ${\bf A1', A2'}$. If ${\bf A1', A2'}$ holds and the
corresponding set $E$ is such that $\Y^*_{(1)}=\Y^+_{(1)}$, then
${\bf A1,A2}$ and ${\bf A1',A2'}$ are equivalent.\end{proposition}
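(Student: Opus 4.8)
The engine of both implications is a single pointwise identity. For any $n\geq 2$, any $x_{1:n}$ and any $i,j\in\Y$, factoring the joint density on the first coordinate and summing out the interior states gives
\[
p(x_{1:n},y_1=i,y_n=j)=p(x_1,y_1=i)\,p_{ij}(x_{1:n}),
\]
so that $(i,j)\in\Y^*(x_{1:n})$ if and only if $p(x_1,i)>0$ and $(i,j)\in\Y^+(x_{1:n})$; equivalently $\Y^*(x_{1:n})=\{(i,j)\in\Y^+(x_{1:n}):p(x_1,i)>0\}$. The plan is to establish this identity first and then read off each direction under the relevant hypotheses: the two families of sets differ only through marginal positivity of the initial state, which is exactly what the reduction $\Y^+_{(1)}=\Y^*_{(1)}$ controls.

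For the implication $\mathbf{A1},\mathbf{A2}\Rightarrow\mathbf{A1'},\mathbf{A2'}$ I would keep the standing reduction $\Y^+_{(1)}=\Y^*_{(1)}$, which says precisely that $p(x_1,i)>0$ for every $i\in\Y^+_{(1)}$ and every $x_1\in E_{(1)}$. By the identity this forces $\Y^*(x_{1:r})=\Y^+(x_{1:r})=\Y^+$ for all $x_{1:r}\in E$, so $\Y^*$ is constant on $E$ and inherits the product form $\Y^+_{(1)}\times\Y^+_{(2)}$ from $\mathbf{A1}$, which is $\mathbf{A1'}$. For $\mathbf{A2'}$ I would route positivity through the stationary law $\pi$: since $Z$ is $\psi$-irreducible with invariant probability $\pi$, Proposition~10.1.2 of \cite{MT} gives $\psi\ll\pi$, so $\psi(E_{(1)}\times\Y^+_{(1)})>0$ yields $\pi(E_{(1)}\times\Y^+_{(1)})>0$; as $\pi$ has density $p(x,y)$ with respect to $\mu\times c$, this forces $\mu(E_{(1)})>0$, and then Tonelli together with $\mu^{r-1}(E(x_1))>0$ from $\mathbf{A2}$ gives $\mu^r(E)=\int_{E_{(1)}}\mu^{r-1}(E(x_1))\,\mu(dx_1)>0$.

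For the converse, under the hypothesis $\Y^*_{(1)}=\Y^+_{(1)}$ the identity again yields $\Y^+(x_{1:r})=\Y^*(x_{1:r})$ on $E$, so the constancy and product structure of $\Y^*$ provided by $\mathbf{A1'}$ transfer verbatim to $\Y^+$, proving $\mathbf{A1}$. The measure clause $\mu^{r-1}(E(x_1))>0$ for all $x_1\in E_{(1)}$ need not hold for the given $E$, but a Tonelli/shrinking argument identical to the one used to arrange \eqref{ineq:n0Bound} lets me pass to $E'=E\cap(G\times\X^{r-1})$, where $G=\{x_1\in E_{(1)}:\mu^{r-1}(E(x_1))>0\}$ satisfies $\mu(G)>0$ because $\mu^r(E)>0$ is $\mathbf{A2'}$; restriction preserves $\mathbf{A1'}$, hence $\mathbf{A1}$. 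Finally I would recover $\psi(E'_{(1)}\times\Y^+_{(1)})>0$: for $i\in\Y^+_{(1)}=\Y^*_{(1)}$ one has $p(x,i)>0$ on $E_{(1)}$, so $\pi(E'_{(1)}\times\Y^+_{(1)})=\sum_{i\in\Y^+_{(1)}}\int_{E'_{(1)}}p(x,i)\,\mu(dx)>0$, and since the invariant probability of a $\psi$-irreducible chain is a maximal irreducibility measure, $\pi\equiv\psi$, whence $\psi(E'_{(1)}\times\Y^+_{(1)})>0$.

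The delicate point is entirely measure-theoretic: bridging the three measures $\mu$ on $\X$, the stationary law $\pi$ on $\Z$, and the maximal irreducibility measure $\psi$. Positivity must be transported across $\mu\times c\gg\pi\equiv\psi$, and the two directions pull opposite ways --- the forward implication needs $\psi\ll\pi$ while the converse needs $\pi\ll\psi$. For the latter I rely on the fact that the invariant probability of a $\psi$-irreducible chain is itself a maximal irreducibility measure, giving $\pi\equiv\psi$. I would also make explicit that $\psi$-irreducibility is a maintained hypothesis (it is part of $\mathbf{A2}$ but absent from $\mathbf{A2'}$), so the asserted equivalence is understood for a $\psi$-irreducible stationary $Z$; once this is granted, the remaining content is exactly the combinatorial identity above plus the Tonelli/shrinking bookkeeping, neither of which presents a real obstacle.
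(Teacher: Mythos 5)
Your converse direction is correct and essentially the paper's argument, but written more carefully: the paper only verifies $\psi(E_{(1)}\times\Y)>0$ and silently ignores the clause of \textbf{A2} requiring $\mu^{r-1}(E(x_1))>0$ for all $x_1\in E_{(1)}$, whereas your passage to $E'=E\cap(G\times\X^{r-1})$ with $G=\{x_1\in E_{(1)}:\mu^{r-1}(E(x_1))>0\}$, together with $\pi(E'_{(1)}\times\Y^+_{(1)})=\sum_{i\in\Y^+_{(1)}}\int_{E'_{(1)}}p(x,i)\,\mu(dx)>0$ and $\pi\equiv\psi$ (the paper's \cite[Th. 10.4.9]{MT}), settles both points. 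Your explicit flagging of $\psi$-irreducibility as a maintained hypothesis is also a legitimate clarification.

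The genuine gap is in the first implication. The first sentence of the proposition is unconditional: \textbf{A1}--\textbf{A2} for a set $E$ must yield \textbf{A1'}--\textbf{A2'}, with no hypothesis relating $\Y^*$ and $\Y^+$; the reduction $\Y^*_{(1)}=\Y^+_{(1)}$ appears only in the second sentence, exactly because it is an extra restriction on $E$ and not a consequence of \textbf{A1}--\textbf{A2}. You instead ``keep the standing reduction,'' i.e.\ you assume $p(x_1,i)>0$ for every $i\in\Y^+_{(1)}$ and every $x_1\in E_{(1)}$, after which your identity makes the implication immediate --- but this assumes away the entire content of the claim. Under \textbf{A1}--\textbf{A2} alone, your identity gives $\Y^*(x_{1:r})=\bigl(C(x_1)\cap\Y^+_{(1)}\bigr)\times\Y^+_{(2)}$ where $C(x_1)=\{i:p(x_1,i)>0\}$, a set that varies with $x_1$ and may even be empty, so \textbf{A1'} can fail for the given $E$ and one must manufacture a new set. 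That is what the paper's proof does: by \textbf{A2} and $\pi\equiv\psi$ there is $i^*\in\Y^+_{(1)}$ with $\pi(E_{(1)}\times\{i^*\})>0$, hence $U\subset E_{(1)}$ with $\mu(U)>0$ and $p(\cdot,i^*)>0$ on $U$; since $\Y$ is finite, a pigeonhole argument gives $U_o\subset U$, $\mu(U_o)>0$, on which $C(x)\equiv C\ni i^*$; then $E_o\DEF\cup_{x\in U_o}\{x\}\times E(x)$ satisfies \textbf{A1'} with the constant nonempty product set $(C\cap\Y^+_{(1)})\times\Y^+_{(2)}$, and $\mu^r(E_o)>0$ by \textbf{A2}, giving \textbf{A2'}. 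In short, the shrinking technique you already use in the converse is needed here too, but aimed at making the marginal support $C(x_1)$ constant (and nonempty, which is where stationarity and \textbf{A2} enter) rather than at the fibre measure $\mu^{r-1}(E(x_1))$; without that step your proof establishes the first implication only under the additional hypothesis, not as stated.
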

\begin{proof} Assume ${\bf
A1}$ and ${\bf A2}$ hold and let $E$ be the corresponding set. Since
$\pi$ is stationary probability measure, it is equivalent to $\psi$
\cite[Thm. 10.4.9]{MT}. Therefore, by {\bf A2}, $\pi(E_{(1)}\times
\Y^+_{(1)})>0$ and so there exists $i^*\in \Y^+_{(1)}$ such that
$\pi(E_{(1)}\times \{i^*\})>0$. Consequently, there exists a set
$U\subset E_{(1)}$ such that $\mu(U)>0$ and $p(x,i^*)>0$ for every
$x\in U$. Let for every $x\in U$, $C(x)=\{j\in \Y: p(x,j)>0\}.$
Clearly there exists $U_o\subset U$ so that $\mu(U_o)>0$ and
$C(x)=C$ for every $x\in U_o$. Note that $i^*\in C\cap \Y^+_{(1)}$.
Define $E_o=\cup_{x\in U_o}\{x\}\times E(x)$. Since $\mu(U_o)>0$, we
have $\mu^r(E_o)>0$. For any $i\in C\cap \Y^+_{(1)}$, for any $j\in
\Y^+_{(2)}$ and for any $x_{1:r}\in E_o$, it holds that
$p(x_{1:r},y_1=i,y_r=j)=p_{ij}(x_{1:r})p(x_1,i)>0$ (because $i\in
\Y^+_{(1)}, j\in \Y^+_{(2)}$, $x_{1:r}\in E$ and so by {\bf A1}
$p_{ij}(x_{1:r})>0$; since $i\in C$ and $x_1\in U_o$, it also holds
that $p(x_1,i)>0$). Therefore, if $x_{1:r}\in E_o$, then
$\Y^*(x_{1:r})= \big(C\cap \Y^+_{(1)}\big)\times \Y^*_{(2)}$ and so
${\bf A1'}$ holds with $E_o$. Since $\mu^r(E_o)>0$, we have that
{\bf A2} holds as
well.\\
 Assume that ${\bf
A1'}$ and ${\bf A2'}$ hold and let $E$ be the corresponding set. If
$\Y^*_{(1)}=\Y^+_{(1)}$, then for every $x_1\in E_{(1)}$ and $i\in
\Y^+_{(1)}$, it holds $p(x_1;i)>0$, and as argued above, ${\bf A1'}$
implies ${\bf A1}$. By (\ref{ineq:Epos}), $\pi(E_{(1)}\times  \Y)>0$
and since $\pi$ is equivalent to $\psi$, it holds that
$\psi(E_{(1)}\times \Y)>0$. Thus ${\bf A1}$ and ${\bf A2}$ hold.
\end{proof}
\paragraph{Reversing the time.}
As previously, let for every $l \leq n$, $l,n \in \mathbb{Z} \cup
\{-\infty,\infty\}$, $t \in \mathbb{Z}$ and $m \geq 1$
$$\nu^t_{l:n;m}[x_{l:n}](v)\DEF P(Y_{t:t+m-1} =v| X_{l:n}=x_{l:n}),\quad v\in \Y^m$$
and, like before, $\nu^t_{l:n;m}$ denotes the random probability
distribution $\nu^t_{l:n;m-1}[X_{l:n}]$. Note that for any $z\in
\mathbb{Z}$, we have that the random vectors $\nu^t_{l:n;m}$ and
$\nu^{t+z}_{l+z:n+z;m}$ have the same distribution, thus w.l.o.g. we
shall consider the case $t=0$. Under ${\bf A1}$ and ${\bf A2}$, it
follows from (\ref{bound1}) using triangular inequality that for
every $-l_2\leq -l_1 \leq -s<0$ and $m\geq 1$, it holds
\begin{equation}\label{rev0}
\sup_{n\geq 0}\| \nu^0_{-l_1:n;m}-\nu^0_{-l_2:n;m} \|_{\rm TV}\leq
C_s\alpha^{s},\quad {\rm a.s.},
\end{equation}
where the random variable $C_s$ is
$\sigma(X_{-s:\infty})$-measurable and so depends on $s$. Assuming
that the reversed-time chain $\{\bar{Z}_{k}\}_{k\geq 0}$, where
$\bar{Z}_k\DEF Z_{-k}$ is also positive Harris satisfying {\bf A1}
and {\bf A2}, then (\ref{rev0}) implies
\begin{equation}\label{rev1}
\sup_{n\geq 0}\| \nu^0_{-n:l_3;m}-\nu^0_{-n:l_4;m} \|_{\rm TV}\leq
\bar{C}_s\bar{\alpha}^{s-m+1},\quad {\rm a.s.},
\end{equation}
where $1\leq m\leq s\leq l_4\leq l_3$, the random variable
$\bar{C}_s$ is $\sigma(X_{-\infty:s})$-measurable
 and $\bar{\alpha}\in (0,1)$.
\\\\
The following theorem shows that when using reversed-time chain and
backward-counter $\bar{\kappa}$, we have the inequality (\ref{rev0})
with
 $C_s$
replaced by another random variable $C_0$ that is
$\sigma(X_{-\infty:0})$-measurable, but independent of $s$.
Similarly, the random variable $\bar{C}_s$ could be replaced by a
$\sigma(X_{m-1:\infty})$-measurable random variable $\bar{C}_{m-1}$
that is also independent of $s$ (but dependent on $m$). Because of
the stationarity, the assumptions {\bf A1}-{\bf A2} are replaced by
the (formally) weaker assumptions ${\bf A1'}-{\bf A2'}$, but as we
argued, they can be considered to be equivalent. Throughout the
section we assume $m\geq 1$ is a fixed integer.
\begin{theorem}\label{thm2} Assume that $Z$ is a stationary and positive Harris chain such that  ${\bf A1'}$ and ${\bf A2'}$
hold. Assume also that  reversed-time chain  $\{\bar{Z}_{k}\}_{k\geq
0}$ is also Harris recurrent. Then there exists a
$\sigma(X_{-\infty:0})$-measurable random variable $C_0$ and
$\alpha\in (0,1)$ such that for every $-l\leq -s < 0<n$  it holds
\begin{equation}\label{rev}
\sup_{n\geq 0}\| \nu^0_{-l:n;m}-\nu^0_{-s:n;m} \|_{\rm TV}\leq
C_0\alpha^{s},\quad {\rm a.s.}.
\end{equation}
There also exists a $\sigma(X_{m-1:\infty})$-measurable random
variable $\bar{C}_{m-1}$ and $\bar{\alpha}\in (0,1)$ such that for
every $s$ and $l$ such that  $m\leq s\leq l$,
\begin{equation}\label{rev2}
\sup_{n\geq 0}\| \nu^0_{-n:l;m}-\nu^0_{-n:s;m} \|_{\rm TV}\leq
\bar{C}_{m-1}{\bar \alpha}^{s-(m-1)}\quad {\rm a.s.}.
\end{equation}
\end{theorem}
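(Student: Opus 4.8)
The plan is to establish (\ref{rev}) by a \emph{backward}-counting variant of the argument proving Theorem \ref{th:forgetting}(ii), and then to obtain (\ref{rev2}) by running that same argument for the reversed-time chain after a time-reversal identification.

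For (\ref{rev}) I would first note that Proposition \ref{prop:TVbound} rests only on the block-decomposition (\ref{decomp}) and the Doeblin bound of Lemma \ref{lem:Doeblin}; under \textbf{A1'} (equivalent to \textbf{A1} by the preceding proposition) these are valid verbatim in the two-sided stationary setting, so with $t=0$ and left endpoint $-s$ the backward-counter form gives, for every $l\geq s$,
\begin{equation*}
\sup_{n\geq 0}\| \nu^0_{-l:n;m}-\nu^0_{-s:n;m} \|_{\rm TV}\leq 2\rho^{\bar\kappa(X_{-s:0})},\quad \mbox{a.s.}
\end{equation*}
The decisive feature is that $\bar\kappa(X_{-s:0})=\sum_{u=0}^{\lfloor s/r'\rfloor-1}\mathbb{I}_E(X_{-(u+1)r':-ur'})$ counts the $E$-blocks from $0$ backwards, so it is $\sigma(X_{-\infty:0})$-measurable and its summands are \emph{nested} in $s$. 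Passing to $\bar Z_k=Z_{-k}$, this backward counter is precisely the forward counter $\kappa_0$ of the reversed chain evaluated on $\bar X_{0:s}$ (with $E$ replaced by its coordinate-reversal). Since $\bar Z$ is positive Harris, the cyclic-decomposition/SLLN construction of Theorem \ref{th:forgetting}(ii) applies to it and yields a rate $p>0$ together with a $\sigma(\bar X_{0:\infty})=\sigma(X_{-\infty:0})$-measurable a.s.\ finite threshold beyond which $\bar\kappa(X_{-s:0})\geq ps$. Setting $\alpha\DEF\rho^{\,p}$ and absorbing the finitely many small-$s$ terms yields (\ref{rev}) with $C_0$ a $\sigma(X_{-\infty:0})$-measurable random variable independent of $s$.

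For (\ref{rev2}) the idea is time-reversal. Writing $v^{R}$ for the coordinate-reversal of $v\in\Y^m$ and using $Y_j=\bar Y_{-j}$, $X_k=\bar X_{-k}$, the pathwise identity
\begin{equation*}
\nu^0_{-n:l;m}(v)=P(Y_{0:m-1}=v\mid X_{-n:l})=P(\bar Y_{-(m-1):0}=v^{R}\mid \bar X_{-l:n})=\bar\nu^{-(m-1)}_{-l:n;m}(v^{R})
\end{equation*}
holds, where $\bar\nu$ is the smoothing distribution of $\bar Z$. As $v\mapsto v^{R}$ is a bijection of $\Y^m$, the total-variation distances agree, so
\begin{equation*}
\| \nu^0_{-n:l;m}-\nu^0_{-n:s;m} \|_{\rm TV}=\| \bar\nu^{-(m-1)}_{-l:n;m}-\bar\nu^{-(m-1)}_{-s:n;m} \|_{\rm TV},\quad \mbox{a.s.}
\end{equation*}
Thus varying the right endpoint in $Z$ becomes varying the left endpoint in $\bar Z$, and I run the argument proving (\ref{rev}) for $\bar Z$ with the block placed at $-(m-1)$. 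This is legitimate: $\bar Z$ is stationary and positive Harris, is assumed Harris recurrent, and inherits \textbf{A1'}--\textbf{A2'}, since reversing each string of $E$ turns $\Y^*=\Y^*_{(1)}\times\Y^*_{(2)}$ into $\Y^*_{(2)}\times\Y^*_{(1)}$ (still a product) while $\mu^r$ is invariant under coordinate-reversal. Because the block now sits at $-(m-1)$, the backward counter spans the distance from $-s$ to $-(m-1)$, namely $s-(m-1)$, giving the exponent $\bar\alpha^{\,s-(m-1)}$; and the resulting constant is $\sigma(\bar X_{-\infty:-(m-1)})$-measurable, which equals $\sigma(X_{m-1:\infty})$, so it is the desired $\bar C_{m-1}$.

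The main obstacle is the measurability bookkeeping in (\ref{rev}): one must rerun the hitting-time/cyclic-decomposition/SLLN construction of Theorem \ref{th:forgetting}(ii) for $\bar Z$ and verify that both the first-entrance time into $D_0$ and the SLLN threshold are functions of $\bar X_{0:\infty}=X_{-\infty:0}$ only, so that $C_0$ is genuinely independent of $s$ and lies in $\sigma(X_{-\infty:0})$ (this is exactly what the backward counter $\bar\kappa$, rather than the forward $\kappa_k$, buys us). A secondary point needing care is the transfer of \textbf{A1'}--\textbf{A2'} to $\bar Z$ described above, which is what licenses invoking the first part of the theorem in the backward-endpoint statement (\ref{rev2}).
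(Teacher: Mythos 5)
Your proposal is correct and follows essentially the same route as the paper: part (\ref{rev}) is obtained by running the cyclic-decomposition/SLLN construction of Theorem \ref{th:forgetting}(ii) on the reversed-time block chain and feeding the backward counter $\bar{\kappa}$ into Proposition \ref{prop:TVbound}, and part (\ref{rev2}) by applying (\ref{rev}) to $\bar{Z}$ after noting that \textbf{A1'}--\textbf{A2'} transfer to the reversed chain with $\bar{E}$. Your write-up is in fact slightly more explicit than the paper's (the coordinate-reversal identity $\nu^0_{-n:l;m}(v)=\bar\nu^{-(m-1)}_{-l:n;m}(v^{R})$ and the measurability bookkeeping are only implicit there), but the underlying argument is identical.
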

\begin{proof}
By stationarity the reversed-time chain ${\bar Z}$ is  positive
Harris. Now we apply the proof of \textbf{(ii)} of Theorem
\ref{th:forgetting} to the reversed-times block chain
$\bar{\ZZ}_{k}\DEF\bar{Z}_{k:k+r'}=(Z_{-k},\ldots, Z_{-k-r'})$, $k
\geq 0$. Let $f: \X^r\to \X^r$ be the mapping that reverses the
ordering of vector: $f(x_1,\ldots,x_r)=(x_r,\ldots,x_1)$, and let
$\bar{E}\DEF \{f(x_{1:r}):x_{1:r}\in E\}$. Thus $\bar{\ZZ}_{k}\in
\bar{E}\times \Y^r$ if and only if $Z_{-k-r':-k}\in E\times \Y^r$.
Now, just like in the proof  of \textbf{(ii)} of Theorem
\ref{th:forgetting}, we define
\begin{align*}
S(n)\DEF\sum_{k=0}^{n-1} \mathbb{I}_{\bar{E} \times \Y^r}({\bar
\ZZ}_{T+k d' r'}),
\end{align*}
where, as previously, $T$ is a random variable so that
$\bar{\ZZ}_T\in D_0$, where $D_0,\ldots,D_{d'-1}$ is a cyclic
decomposition of $\bar{\ZZ}_k$. The set $D_0$ is such that
$P\big(\ZZ_1\in D_0\cap (E\times \Y^r)\big)>0$, by
(\ref{ineq:Epos}), such a set $D_0$  exists. Therefore, $S(n)/n\to
p_o$, a.s., where
$$p_o\DEF P\big(\bar{\ZZ}_1\in D_0\cap (\bar{E}\times \Y^r)\big|\bar{\ZZ}_1\in
D_0)>0.$$ We denote $\bar{X}_t \DEF X_{-t}$. Thus, for any $u$,
$\bar{X}_{T:T+u}=(X_{-T},\ldots,X_{-T-u})$ and so for any
$k=0,1,2,\ldots,$ $\bar{X}_{T+k d'r':T+(k+1)d'r'}\in \bar{E}$,
equivalently, $\bar{\ZZ}_{T+k d'r'}\in \bar{E}\times \Y^r$   only if
$X_{-T-(k+1) d'r':T-kd'r'}\in E$. Hence the inequality
(\ref{ineq:kap}) now is
\begin{equation}\label{ineqkap2}
{\kappa}_0( \bar{X}_{T:T+u})=\bar{\kappa}(X_{-T-u:-T})\geq
S\big(\big\lfloor {u\over (d'r')}\big\rfloor\big),
\end{equation}
where $\bar{\kappa}$ is defined as in (\ref{kappa'}). Now everything
is the same as in the proof of \textbf{(ii)} of Theorem
\ref{th:forgetting}: for every for  $0<p<{p_0\over d'r'}$, there
exists a finite random variable $U$ (depending on  $p$) such that
for all $k \geq 0$,
\begin{align*}
 \bar{\kappa}(\bar{X}_{T:T+U+k})>(U+k)p.
\end{align*}
Therefore, if $s\geq T+U$, by taking $k=s-(T+U)$, we have
$$\bar{\kappa}(\bar{X}_{0:s})>(U+k)p.$$ By assumption all conditions of   Proposition
\ref{prop:TVbound} and applying it with $-l\leq -s\leq 0\leq n$ (and
with $\bar{\kappa}$), we obtain just like in the proof of
\textbf{(ii)} of Theorem \ref{th:forgetting}
\begin{align*}
\| \nu^0_{-l:n;m}-\nu^0_{-s:n;m} \|_{\rm TV}& \leq
2\rho^{\bar{\kappa}(X_{-s:0})}\leq 2\rho^{p(s-T)\mathbb{I}(T+U\leq
s)} \leq 2\alpha^{-(U+T)} \cdot \alpha^{s},\quad \text{a.s.}.
\end{align*}
So the statement holds with $C_0\DEF 2\alpha^{-(U+T)}$ and the the
random variable $C_0$ is independent of $s$. This proves
(\ref{rev}).
\\
If $Z$ satisfies ${\bf A1'}$ and ${\bf A2'}$, then the reversed-time
chain $\bar{Z}$ satisfies ${\bf A1'}$ and ${\bf A2'}$ with $\bar{E}$
instead of $E$. Then the inequality (\ref{rev}) applied to $\bar{Z}$
yields to (\ref{rev2}). The constants might be different, because
the transition kernel of $\bar{Z}$ might be different from that of
$Z$.\end{proof}
\paragraph{The limits.}
By Levy's martingale convergence theorem, for every $s$ there exists
limits (recall that $\nu^0_{-s:n;m}$ are just $\mid
\Y\mid^m$-dimensional random vectors)
\begin{equation}\label{lim1}
\lim_n \nu^0_{-s:n;m}\DEF \nu^0_{-s:\infty;m},\quad {\rm a.s.},\quad
\lim_l \nu^0_{-l:\infty;m}\DEF \nu^0_{-\infty:\infty;m},\quad {\rm
a.s.}.
\end{equation}
Plugging (\ref{lim1}) into (\ref{rev}), we obtain
\begin{equation}\label{rev-inf1}
\| \nu^0_{-\infty:\infty;m}-\nu^0_{-s:\infty;m} \|_{\rm TV}\leq
C_0\alpha^{s}\quad \rm{a.s.}.
\end{equation}
Similar, for any $s>1$, the limit
\begin{equation}\label{lim2}
\lim_l \nu^0_{-s:l;m}\DEF \nu^0_{-s:\infty;m},\quad \rm{a.s.}
\end{equation}
exists and plugging (\ref{lim2}) into
 (\ref{rev2}), we obtain for any $s'>m-1$
\begin{equation}\label{rev-inf2}
\| \nu^0_{-s:\infty;m}-\nu^0_{-s:s';m} \|_{\rm TV}\leq
\bar{C}_{m-1}\bar{\alpha}^{s'-(m-1)},\quad \rm{a.s.}.
\end{equation}
The inequalities (\ref{rev-inf1}) and (\ref{rev-inf2}) together
imply the following approximation inequality
\begin{equation}\label{sm}
\| \nu^0_{-\infty:\infty;m}-\nu^0_{-s:s';m} \|_{\rm TV}\leq
C_0\alpha^{s}+ \bar{C}_{m-1}\bar{\alpha}^{s'-(m-1)},\quad \rm{a.s.}.
\end{equation}
Applying (\ref{sm}) to $\nu^t_{1:n;m}$, we obtain the following
corollary.
\begin{corollary}\label{korra2} Suppose the assumptions of Theorem \ref{thm2} hold. Then there exists $\alpha_o\in (0,1)$
such that for every $n,t$ satisfying $n\geq t+m-1$, $t\geq 1$, it
holds
\begin{equation}\label{2poolt}
\|P(Y_{t:t+m-1}\in \cdot|X_{1:n})-P(Y_{t:t+m-1}\in
\cdot|X_{-\infty:\infty})\|_{\rm TV}\leq C_t \alpha_o^{(t-1)\wedge
(n-t-m+1)},\quad {\rm a.s.},
\end{equation}
where $C_t$ is a $\sigma(X_{-\infty:t},X_{t+m-1:\infty})$-measurable
random variable.
\end{corollary}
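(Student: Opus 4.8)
The plan is to deduce (\ref{2poolt}) from the approximation inequality (\ref{sm}) by using stationarity to move the time origin to $t$. First I would introduce the shifted process $Z'=\{Z'_k\}_{k\in\mathbb{Z}}$ with $Z'_k\DEF Z_{t+k}$, so that $X'_j=X_{t+j}$ and $Y'_j=Y_{t+j}$. Since $Z$ is stationary, $Z'$ has the same law as $Z$ and is therefore again a stationary positive Harris chain satisfying ${\bf A1'}$--${\bf A2'}$ with a Harris-recurrent reversed chain; hence Theorem \ref{thm2}, and in particular (\ref{sm}), applies to $Z'$. The smoothing vector $P(Y'_{0:m-1}\in\cdot\mid X'_{-s:s'})$ of $Z'$ equals $P(Y_{t:t+m-1}\in\cdot\mid X_{t-s:t+s'})=\nu^t_{t-s:t+s';m}$, and $P(Y'_{0:m-1}\in\cdot\mid X'_{-\infty:\infty})=\nu^t_{-\infty:\infty;m}$, both almost surely. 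Thus (\ref{sm}) for $Z'$ reads, in the original notation,
$$\|\nu^t_{-\infty:\infty;m}-\nu^t_{t-s:t+s';m}\|_{\rm TV}\leq C'_0\,\alpha^{s}+\bar{C}'_{m-1}\,\bar{\alpha}^{\,s'-(m-1)},\quad{\rm a.s.},$$
where $C'_0$ and $\bar{C}'_{m-1}$ are the constants furnished by Theorem \ref{thm2} for $Z'$; by construction $C'_0$ is $\sigma(X'_{-\infty:0})=\sigma(X_{-\infty:t})$-measurable and $\bar{C}'_{m-1}$ is $\sigma(X'_{m-1:\infty})=\sigma(X_{t+m-1:\infty})$-measurable.

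Next I would specialize to $s=t-1$ and $s'=n-t$, so that $t-s=1$, $t+s'=n$ and $s'-(m-1)=n-t-m+1$, giving
$$\|\nu^t_{-\infty:\infty;m}-\nu^t_{1:n;m}\|_{\rm TV}\leq C'_0\,\alpha^{t-1}+\bar{C}'_{m-1}\,\bar{\alpha}^{\,n-t-m+1},\quad{\rm a.s.}.$$
Setting $\alpha_o\DEF\alpha\vee\bar{\alpha}\in(0,1)$ and using $\alpha\leq\alpha_o<1$, $\bar{\alpha}\leq\alpha_o<1$ together with the fact that $t-1$ and $n-t-m+1$ both dominate their minimum, each summand is at most its constant times $\alpha_o^{(t-1)\wedge(n-t-m+1)}$. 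Hence (\ref{2poolt}) follows with $C_t\DEF C'_0+\bar{C}'_{m-1}$, which is $\sigma(X_{-\infty:t},X_{t+m-1:\infty})$-measurable as claimed.

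Finally I would dispose of the boundary cases. The choice $s=t-1$ requires $s\geq 1$ (i.e. $t\geq 2$) and $s'=n-t$ requires $s'>m-1$ (i.e. $n>t+m-1$) for (\ref{sm}) to apply; when $t=1$ or $n=t+m-1$ the relevant exponent leaves the admissible range. But in precisely these cases $(t-1)\wedge(n-t-m+1)=0$, so $\alpha_o^{(t-1)\wedge(n-t-m+1)}=1$, and since the total variation distance of two probability measures never exceeds $2$ while $C_t\geq C'_0\geq 2$ (the explicit form $C_0=2\alpha^{-(U+T)}$ in the proof of Theorem \ref{thm2} gives $C'_0\geq2$), the asserted bound holds trivially. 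I expect the only real care to lie in this bookkeeping: confirming that stationarity transports not merely the distributional identities but the almost-sure inequality and the stated measurability of the shifted constants, and checking that the admissible ranges of $s$ and $s'$ leave uncovered only the trivial, zero-exponent cases.
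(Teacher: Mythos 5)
Your proposal is correct and follows essentially the same route as the paper: the paper proves (\ref{sm}) at the time origin $t=0$ and then, exactly as you do, invokes stationarity to shift the origin to $t$, applies the inequality with $s=t-1$, $s'=n-t$, and absorbs both geometric terms into $C_t\,\alpha_o^{(t-1)\wedge(n-t-m+1)}$ with $\alpha_o=\alpha\vee\bar{\alpha}$ and $C_t=C_0'+\bar{C}_{m-1}'$. Your explicit treatment of the boundary cases $t=1$ and $n=t+m-1$ (where the exponent is zero and the bound holds trivially since $C_t\geq 2$ dominates the total variation distance) is a detail the paper leaves implicit, but it is the same argument.
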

With inequalities (\ref{rev0}) (letting first $n\to \infty$ and then $l_2\to \infty$)  and (\ref{rev1}) (with $n=l_1$ and $l_3\to \infty$), the approximation
inequality (\ref{sm}) would be
\begin{equation}\label{sm1}
\| \nu^0_{-\infty:\infty;m}-\nu^0_{-l_1:l_4;m} \|_{\rm TV}\leq
C_s\alpha^{s}+ \bar{C}_{s'}\bar{\alpha}^{s'},\quad \rm{a.s.},
\end{equation}
where, $l_1\geq s$ and $l_4\geq s'$, the random variables $C_s$ and
$\bar{C}_{s'}$ depend on $s$ and $s'$, respectively. Applying this
inequality to $\nu^t_{1:n;m}$, we obtain the following counterpart
of Corollary \ref{korra2}
\begin{corollary}\label{korra3} Suppose the assumptions of Theorem \ref{thm2} hold.
Then there exist $\alpha,\bar{\alpha}\in (0,1)$ such that for every
$t,k,n$ satisfying $n\geq k\geq t+m-1$, $t\geq 1$ it holds
\begin{equation}\label{2poolt2}
\|P(Y_{t:t+m-1}\in \cdot|X_{1:n})-P(Y_{t:t+m-1}\in
\cdot|X_{-\infty:\infty})\|_{\rm TV}\leq C_1\alpha^{t-1}+\bar{C}_k\bar{\alpha}^{k-t-m+1} ,\quad {\rm a.s.},
\end{equation}
where , $C_1$ is $\sigma(X_{1:\infty})$-measurable and $\bar{C}_k$
is $\sigma(X_{-\infty:k})$-measurable.
\end{corollary}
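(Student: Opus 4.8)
My plan is to obtain (\ref{2poolt2}) by applying the two-sided approximation inequality (\ref{sm1}) to the shifted smoothing vector $\nu^t_{1:n;m}$ and then choosing the two split points so that the error terms take the advertised form. Since the law of the two-sided chain is invariant under time shifts, the a.s. inequality (\ref{sm1}) (which was derived from (\ref{rev0}) and (\ref{rev1})) transfers verbatim from the target-$0$ configuration $\nu^0_{-l_1:l_4;m}$ to the configuration with target at $t$ and window $[1,n]$; this corresponds to $l_1=t-1$ and $l_4=n-t$. It therefore gives, a.s.,
\begin{equation*}
\|\nu^t_{1:n;m}-\nu^t_{-\infty:\infty;m}\|_{\rm TV}\le C_s\,\alpha^{s}+\bar C_{s'}\,\bar\alpha^{\,s'-(m-1)},\qquad s\le t-1,\quad s'\le n-t,
\end{equation*}
where, after the shift, the forward constant is $\sigma(X_{t-s:\infty})$-measurable and the backward constant (inherited from (\ref{rev1})) is $\sigma(X_{-\infty:t+s'})$-measurable.

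For the forward (left-boundary) term I would take the split $s=t-1$, so the constant becomes $\sigma(X_{1:\infty})$-measurable and the decay is exactly $\alpha^{t-1}$. Symmetrically, for the backward (right-boundary) term I would take $s'=k-t$ for any $k$ with $t+m-1\le k\le n$ (admissible since $s'\le n-t$): the constant is then $\sigma(X_{-\infty:k})$-measurable and the decay is $\bar\alpha^{\,(k-t)-(m-1)}=\bar\alpha^{\,k-t-m+1}$, the shift of the block end from $t$ to $t+m-1$ being exactly what produces the $-(m-1)$ in the exponent. Collecting the two terms yields (\ref{2poolt2}); the ideal object $\nu^t_{-\infty:\infty;m}$ is well defined as the a.s.\ martingale limit in (\ref{lim1})/(\ref{lim2}).

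The step I expect to be delicate, and the one I would write out carefully, is the claim that the forward constant obtained with split $s=t-1$ is a \emph{single, $t$-independent} random variable $C_1$, and likewise that the backward constant is a single $\bar C_k$. This is not visible from the $s$-indexed form of (\ref{sm1}) alone: it must be justified by tracing (\ref{sm1}) back through (\ref{rev0}) and (\ref{rev1}) to (\ref{bound1}), and recognizing that the constant with split $s=t-1$ is precisely the constant of Theorem \ref{th:forgetting}(ii) with the \emph{truncation fixed at time $1$}, which that theorem produces once and for all targets $t\ge 1$ with decay $\alpha^{t-1}$; the backward constant is the analogous reversed-time constant with truncation fixed at $k$.

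To make this $t$-independence manifest I would, if needed, bypass (\ref{sm1}) and argue directly. Split
\begin{equation*}
\|\nu^t_{1:n;m}-\nu^t_{-\infty:\infty;m}\|_{\rm TV}\le \|\nu^t_{1:n;m}-\nu^t_{-\infty:n;m}\|_{\rm TV}+\|\nu^t_{-\infty:n;m}-\nu^t_{-\infty:\infty;m}\|_{\rm TV}.
\end{equation*}
The first piece is bounded by the two-sided form of Theorem \ref{th:forgetting}(ii) with left truncation fixed at $1$, giving $C_1\alpha^{t-1}$ with $C_1$ a fixed $\sigma(X_{1:\infty})$-measurable variable (uniformly in $n$ by the supremum in (\ref{bound1})). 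For the second piece I would pass both $\nu^t_{-\infty:n;m}$ and $\nu^t_{-\infty:\infty;m}$ through $\nu^t_{-\infty:k;m}$ and apply the reversed-time Theorem \ref{th:forgetting}(ii) to the chain $\bar Z$ with right truncation fixed at $k$, whose constant is measurable with respect to the reversed future, i.e.\ $\sigma(X_{-\infty:k})$; the triangle inequality then yields $\bar C_k\bar\alpha^{\,k-t-m+1}$ (after absorbing a factor $2$ into $\bar C_k$). This route makes both the $t$-independence and the stated measurabilities transparent, the remaining work being the routine stationarity shift and the time-reversal bookkeeping.
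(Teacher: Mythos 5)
Your proposal is correct and matches the paper's own proof, which obtains (\ref{2poolt2}) in exactly the same way: by applying (\ref{sm1}) to $\nu^t_{1:n;m}$ with the splits $s=t-1$ and $s'=k-t$ (and your exponent $\bar\alpha^{\,s'-(m-1)}$ is indeed the consistent one, cf.\ (\ref{sm}) and (\ref{rev2}), the bare $\bar\alpha^{s'}$ in (\ref{sm1}) being a slip). The $t$-independence of $C_1$ and $\bar C_k$ that you flag is resolved exactly as you suggest --- Theorem \ref{th:forgetting}(ii) produces, for a fixed truncation point, one constant valid for all targets, which is what the paper records in its ``Ergodicity'' paragraph ($C_l=f(X_{l:\infty})$ with $f$ independent of $l$) --- and your direct triangle-inequality variant merely unpacks the derivation of (\ref{sm1}) rather than giving a genuinely different route.
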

Corollary \ref{korra3} is a PMM-generalization of Theorem 2.1 in
\cite{smoothing2} (see also \cite{kuljus}). As  mentioned in
the introduction, (\ref{2poolt2}) is very useful in many
applications of segmentation theory.
\paragraph{Ergodicity.} In (\ref{2poolt2}), we can replace 1 by any $l\in \mathbb{Z}$, and consider the stochastic process $\{C_l\}_{l\in \mathbb{Z}}$. The construction of $C_l$ reveals that for any $l$, $C_l=f(X_{l,\infty})$, where the function $f$ is independent of $l$. This means that the process $\{C_l\}_{l\in \mathbb{Z}}$ is a stationary coding of the process $Z$ (see e.g. \cite[Ex. I.1.9]{Shields} or \cite[Sec. 4.2.]{Gray}). Since stationary coding preserves stationarity and ergodicity (\cite[Lemma 4.2.3]{Gray} or \cite[Ex. I,2,12]{Shields}), we see that the process $\{C_l\}_{l\in \mathbb{Z}}$ is stationary (since $Z$ was assumed to be stationary) and, when $Z$ is ergodic process (in the sense of ergodic theory), then so is $\{C_l\}_{l\in \mathbb{Z}}$. The same holds for the process $\{\bar{C}_k\}_{k\in \mathbb{Z}}$. The ergodicity of these processes is key for proving the existence of limit $R$ in PMAP segmentation (recall paragraph "Applications in segmentation").
\section{Examples} \label{sec:Examples}
\subsection{Countable $\X$}
When $\X$ is countable, then $Z$ is a Markov chain with countable
state space and $Z$ is (positive) Harris recurrent if and only if
$Z$ is (positive) recurrent. If $\X$ is finite, then every
irreducible Markov chain is positive recurrent. If $\X$ is
countable, then  \textbf{A1} is fulfilled if and only if for some
$r>1$ there exists a vector $x_{1:r} \in \X^r$ such that
$\Y^+(x_{1:r})=\Y^+(x_{1:r})_{(1)}\times \Y^+(x_{1:r})_{(2)} \neq
\emptyset$. For irreducible $Z$, the assumption {\bf A2}
automatically holds if $\Y^+(x_{1:r})\ne \emptyset$ and that is
guaranteed by {\bf A1}. The interpretation of ${\bf A1'}$ in the
case of countable $\X$ is very straightforward: for every two
vectors $y_{1:r},\bar{y}_{1:r}\in \Y^r$ satisfying
$p(x_{1:r},y_{1:r})>0$ and $p(x_{1:r},\bar{y}_{1:r})>0$, there
exists a third vector $\tilde{y}_{1:r}\in \Y^r$ such that
 $\tilde{y}_1=y_1$,
$\tilde{y}_r=\bar{y}_r$ and $p(x_{1:r},\tilde{y}_{1:r})>0$. In
ergodic theory, this property is called as the {\it subpositivity}
of the word $x_{1:r}$ for {\it factor map} $\pi: {\cal Z}\to {\cal
X}, \pi(x,y)=x$, see (\cite{yoo}, {Def 3.1}). Thus ${\bf A1'}$
ensures that a.e. realization of $X$ process has infinitely many
subpositive words.
\subsection{Nondegenerate PMM's}\label{sec:nondeg} In \cite{TvH12,TvH14}, Tong and van Handel introduce the
non-degenerate PMM.  When adapted to our case, the model is {\it
non-degenerate} when the kernel density  factorizes as follows
\begin{equation}\label{non-deg}
q(x',j|x,i)=p_{ij}r(x'|x)g(x,i,x',j),
\end{equation}
where $\mathbb{P}=(p_{ij})$ is a transition matrix and $r(x'|x)$ is
a density of transition kernel, i.e for every $x$, $x'\mapsto
r(x'|x)$ is a density  with respect to $\mu$ so that $R(A|x)\DEF
\int_A r(x'|x)\mu(dx')$ is a transition kernel on $\X\times \B(\X)$.
The third factor $g(x,i,x',j)$ is a strictly positive measurable
function. For a motivation and general properties of non-degenerate
PMM's see \cite{TvH12}, the key property is that the function $g$ is
strictly positive. The non-degenerate property does not imply that
$Y$ is  a Markov chain and even if it is, its transition matrix need
not be $\mathbb{P}$. Under (\ref{non-deg}), for every $x_{1:n}$,
$n\geq 2$, $i,j\in \Y$
\begin{equation}\label{pos}
p_{ij}(x_{1:n})= p^{n-1}_{ij} g_n(i,j,x_{1:n})\prod_{k=1}^n
r(x_k|x_{k-1}),\end{equation} where $p^{n-1}_{ij}$ stands for the
$i,j$-element of  $\mathbb{P}^{n-1}$ and $g_n(i,j,x_{1:n})>0$, (see
also \cite[Lemma 3.1]{TvH12}). From (\ref{pos}) it immediately
follows that when $\mathbb{P}$ is primitive, i.e. for some $R\geq
1$, $\mathbb{P}^R$ has strictly positive entries, then any $x_{1:r}$
with $r=R+1$ such that $p(x_{1:r})>0$ satisfies {\bf A1}:
$\Y^+(x_{1:r})=\Y\times \Y$. Thus, when $\mathbb{P}$ is primitive,
then {\bf A1} and {\bf A2} both hold with $E=\{x_{1:r}:
p(x_{1:r})>0\}$.\\\\
Barely the non-degeneracy  is not sufficient for the primitivity of
$\mathbb{P}$. We now show that when combined with some natural
ergodicity assumptions, then $\mathbb{P}$ is primitive. Let
$P^n(i,j)\DEF P(Y_{n}=j|Y_1=i)$, $n>1$. Recall that $\pi$ is a
stationary measure of $Z$, and with a slight abuse of notation, let
$\pi(i)\DEF \pi(\{i\} \times \X)$ be a marginal measure of $\pi$.
Surely $\pi(i)>0$ for every $i\in \Y$ and so the convergence
\begin{equation}\label{ergo}
\sum_{i\in \Y}\pi(i)\|P^n(i,\cdot)-\pi(\cdot)\|_{TV}\to 0,
\end{equation}
equivalently, $P(Y_n=j|Y_1=i)\to \pi(j),\quad \forall i,j\in \Y$
implies that $P^n(i,j)$ must consist of all positive entries when
$n$ is big enough. If $Y$ happens to be a Markov chain with
transition matrix $\mathbb{P}$, then it is primitive. Otherwise
observe that by (\ref{pos})
$$P^n(i,j)=\int_{\X^{n}}p(x_1|y_1=i)p_{ij}(x_{1:n})\mu^{n}(dx_{1:n})=p^{n-1}_{ij}\int_{\X^{n}}p(x_1|y_1=i)g_n(i,j,x_{1:n})\prod_{k=1}^n
r(x_k|x_{k-1})\mu^{n}(dx_{1:n})$$ so that if there exists $n$ such
that $P^n(i,j)>0$ for every $i,j\in \Y$, then $\mathbb{P}^{n-1}$
consists of strictly positive entries and so it is primitive. Hence
for non-degenerate PMM's (\ref{ergo}) implies {\bf A1} and {\bf A2}.
A stronger version of  (\ref{ergo}) (so-called {\it marginal
ergodicity}) is assumed in \cite{TvH12} for proving the filter
stability for non-degenerate PMM's  \cite[Th 2.10]{TvH12}. Thus, for
finite $\Y$, Theorem \ref{th:forgetting} generalizes that result. We
believe that the  key assumption of non-negative $g$ can be relaxed
in the light of cluster-assumption introduced in the next subsection
for HMM's.
\subsection{Hidden Markov model}\label{HMM}
In case of HMM the transition kernel density factorizes as
$q(x',j|x,i)=p_{ij}f_j(x')$. Here $\mathbb{P}=(p_{ij})$ is the
transition matrix of the Markov chain $Y$ and $f_j$ are the {\it
emission densities} with respect to measure $\mu$. Thus
$$p_{ij}(x_{1:n})=\sum_{k_1,\ldots,k_{n-2}}p_{ik_1}f_{k_1}(x_2)p_{k_1 k_2}f_{k_2}(x_3)\cdots p_{k_{n-2}j}f_j(x_n).$$
Let  $G_i\DEF\{x
\:| \: f_i(x)>0\}$. The process $Z$ is irreducible (with respect to
some measure) if and only if $Y$ is irreducible and in this case the
maximal irreducible measure is
$$\psi\big(\cup_{i\in \Y} A_i\times \{i\}\big)=\mu\big(\cup_{i\in \Y} A_i\cap G_i\big),\quad  A_i\in {\cal
B}(\X).
$$
Since HMM's are by far the most popular PMM's in practice, it would
be desirable to have a relatively easy criterion to check the
assumptions {\bf A1} and {\bf A2} for HMM's. In this subsection, we
introduce  a fairly general but easily verifiable assumption called
{\it cluster assumption}. Lemma \ref{lem:HMMcond} below shows that
cluster assumption implies {\bf A1} and {\bf A2}. The rest of the
subsection is mostly devoted to show that the cluster assumption
still generalizes many similar assumptions encountered in the
literature.
\\\\
A subset $C\subset\Y$ is called a \emph{cluster}, if
\begin{equation}\label{weak-cl}
\mu \left[ \left(\cap _{i\in C}G_i \right)\setminus \left(\cup
_{i\notin C}G_i \right) \right]>0.
\end{equation}
 Surely, at least one cluster always exists. Also, it is important to observe that every state $i$ belongs to at least one cluster. Distinct clusters need not be disjoint and  a cluster can consist of
a single state.  \\\\
The {\bf cluster assumption} states:
 There exists a
cluster $C\subset\mathcal{Y}$ such that the sub-stochastic matrix
$\mathbb{P}_C=(p_{ij})_{i,j\in C}$ is { primitive}, that is
$\mathbb{P}^R_C$ has only positive elements for some positive
integer $R$.\\\\
Thus the cluster assumptions implies that the Markov cain $Y$ is
aperiodic but not vice versa -- for a counterexample consider a
classical example appearing in \cite{HMMbook} (Example 4.3.28) as
well as  in \cite{ChigFilter,vH15}. Let $\Y=\{0,1,2,3\}$,
$\X=\{0,1\}$ and let the Markov chain $Y$ be be defined by
$Y_k=Y_{k-1}+U_k \pmod 4$, where $\{U_k\}$ is an i.i.d. Bernoulli
sequence with $P(U_k=1)=p$ for some $p \in (0,1)$. The observations
are defined by $X_k=\mathbb{I}_{\{0,2\}}(Y_k)$ and the initial
distribution of $Y$ is given by $P(Y_1=0)=P(Y_1=1)=\frac{1}{2}$.
Here $G_0=G_2=\{1\}$ and $G_1=G_3=\{0\}$. Thus the clusters are
$\{0,2\}$ and $\{1,3\}$ and the corresponding matrices
$\mathbb{P}_C$ are both diagonal so  that cluster-condition is not
fulfilled. In this example also {\bf A1}-{\bf A2} is not fulfilled
-- indeed for every $x_{1:r} \in \X^r$, there exists a pair $i,j\in
\{0,1,2,3\}$, depending on $x_{1:r}$ so that
$\Y^+(x_{1:r})=\{(i,j),(i+2 \pmod 4,j+2 \pmod 4)\}$. Thus
$\Y^+(x_{1:r})_{(1)}=\{i,i+2 \pmod 4\}$ and
$\Y^+(x_{1:r})_{(2)}=\{j,j+2 \pmod 4\}$ and  $\Y^+(x_{1:r}) \neq
\Y^+(x_{1:r})_{(1)} \times \Y^+(x_{1:r})_{(2)}$. Finally, we observe
that forgetting properties fail. To see that observe:
 knowing $X_{1:n}$ one can completely determine the hidden sequence
$Y_{1:n}$. For example if $X_{1:8}=01110010$ then
$Y_{1:8}=12223301$. One the other hand from $X_{2:n}$ it is not
possible to fully determine any $Y_k$, provided $X_2 =1$. For
example, if $X_{2:8}=1110010$, then $Y_{1:8}$ is either 12223301
(when $X_1=0$) or 00001123 (when $X_1=1$). In particular we have for
$3 \leq t \leq n$,
\begin{align*}
\nu^t_{3:n;1}(i)&= \sum_{j \in \Y}P(Y_3=j|X_{3:n})P(Y_t=i|Y_{3}=j,X_{3:n})\\
&=X_3\sum_{j \in \{0,2\}}P(Y_3=j|Y_{3} \in \{0,2\})P(Y_t=i|Y_{3}=j,X_{3:n}) \\
& \quad +(1-X_3)\sum_{j \in \{1,3\}}P(Y_3=j|Y_{3} \in
\{1,3\})P(Y_t=i|Y_{3}=j,X_{3:n}).
\end{align*}
Now observe that there exists $1/2>\epsilon>0$
such that $\min_{j \in \{0,2\}}P(Y_3=j|Y_{3} \in \{0,2\})$ and
$\min_{j \in \{1,3\}}P(Y_3=j|Y_{3} \in \{1,3\})$ are both greater
than $\epsilon$ and, therefore, less than $1-\epsilon$. We therefore
obtain
\begin{align*}
&\nu^t_{3:n;1}(i)\geq \epsilon [X_3(\phi_t(i,0)+\phi_t(i,2))+(1-X_3)(\phi_t(i,1)+\phi_t(i,3))],\\
&\nu^t_{3:n;1}(i)\leq (1-\epsilon)
[X_3(\phi_t(i,0)+\phi_t(i,2))+(1-X_3)(\phi_t(i,1)+\phi_t(i,3))],
\end{align*}
where
 $\phi_{t}(i,j)=P(Y_t=i|Y_{3}=j,X_{3:n})$. Noting that $\phi_t(i,j)$ is always either zero or
 one and
 $$X_3(\phi_t(i,0)+\phi_t(i,2))+(1-X_3)(\phi_t(i,1)+\phi_t(i,3))=1,$$
 it follows that for every $i$,  $\nu^t_{3:n;1}(i)\in (\epsilon,1-2\epsilon)$.
 Since  for every $i$, $\nu^t_{1:n;1}(i)\in \{0,1\}$, we have that
\begin{equation*}
\| \nu^t_{1:n;1}-\nu^t_{3:n;1}\|_{\rm TV} \geq 2\epsilon
\end{equation*}
for all $3 \leq t \leq n$, and so neither (i) nor (ii) of Theorem
\ref{th:forgetting} holds.
\begin{lemma}\label{lem:HMMcond} Let $Z$ be hidden Markov chain with irreducible hidden chain $Y$. Then the cluster-assumption  implies \textbf{A1}-\textbf{A2}.
\end{lemma}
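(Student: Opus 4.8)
The plan is to exhibit an explicit set $E$ that witnesses both \textbf{A1} and \textbf{A2}, built directly from the primitive cluster. Let $C$ be the cluster supplied by the cluster assumption, so that $\mathbb{P}_C=(p_{ij})_{i,j\in C}$ is primitive with some exponent $R$, i.e. $\mathbb{P}_C^{R}$ has all entries positive. Write $B_C:=\left(\cap_{i\in C}G_i\right)\setminus\left(\cup_{i\notin C}G_i\right)$ for the associated observation set, which satisfies $\mu(B_C)>0$ by \eqref{weak-cl}. I would then set $r:=R+2$ and $E:=\X\times B_C^{\,r-1}\subset\X^r$. The relevant feature of the HMM structure is that $p_{ij}(x_{1:r})$ does not depend on $x_1$ and equals the path sum $\sum_{y_2,\dots,y_{r-1}} p_{iy_2}f_{y_2}(x_2)p_{y_2y_3}f_{y_3}(x_3)\cdots p_{y_{r-1}j}f_j(x_r)$, with $y_1=i$ and $y_r=j$ fixed.

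The core step is to identify $\Y^+(x_{1:r})$ for $x_{1:r}\in E$. Since $x_2,\dots,x_r\in B_C$, the factor $f_{y_k}(x_k)$ is positive exactly when $y_k\in C$ and vanishes otherwise, so every contributing path has $y_2,\dots,y_r\in C$; in particular $p_{ij}(x_{1:r})>0$ forces $j\in C$. Writing $D:=\{i\in\Y: p_{ia}>0\text{ for some }a\in C\}$, I would argue that $p_{ij}(x_{1:r})>0$ holds if and only if $i\in D$ and $j\in C$. The ``only if'' direction is immediate, because the first step pins $y_2\in C$ with $p_{iy_2}>0$, so $i\in D$. For the ``if'' direction, pick any $a\in C$ with $p_{ia}>0$ and use $(\mathbb{P}_C^{\,r-2})_{aj}=(\mathbb{P}_C^{R})_{aj}>0$ to obtain a positive length-$(r-2)$ path from $a$ to $j$ inside $C$; concatenating with $i\to a$ gives a positive path $i\to y_2\to\cdots\to y_r=j$, all of whose emission factors are positive since $y_k\in C$ and $x_k\in B_C$. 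Hence $\Y^+(x_{1:r})=D\times C$ for every $x_{1:r}\in E$: this set is independent of $x_{1:r}$, it is of product form, and it is nonempty because $C\subseteq D$ (each row of $\mathbb{P}_C$ is nonzero, as $\mathbb{P}_C^{R}>0$ forces a positive outgoing edge within $C$ from every state of $C$). Thus \textbf{A1} holds with $\Y^+_{(1)}=D$ and $\Y^+_{(2)}=C$.

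Finally I would check \textbf{A2}. Since $Y$ is irreducible, $Z$ is $\psi$-irreducible with $\psi\!\left(\cup_i A_i\times\{i\}\right)=\mu\!\left(\cup_i A_i\cap G_i\right)$, as recalled in this subsection. Here $E_{(1)}=\X$ and $\Y^+_{(1)}=D\supseteq C$, so $\psi(E_{(1)}\times\Y^+_{(1)})=\mu\!\left(\cup_{i\in D}G_i\right)\geq\mu(B_C)>0$, and for each $x_1\in E_{(1)}$ we have $E(x_1)=B_C^{\,r-1}$, so $\mu^{r-1}(E(x_1))=\mu(B_C)^{\,r-1}>0$; both requirements of \textbf{A2} are met. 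The delicate point, and the hinge of the whole argument, is the product decoupling in the previous paragraph: primitivity guarantees that once the path enters $C$ it can reach \emph{every} state of $C$ in the remaining $r-2$ steps, so the admissible first coordinate (``can reach $C$'', giving $D$) and the admissible last coordinate (``lies in $C$'', giving $C$) combine as a genuine Cartesian product rather than a general relation. The rest is routine bookkeeping with the explicit HMM densities and with the form of $\psi$.
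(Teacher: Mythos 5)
Your proof is correct and follows essentially the same route as the paper: both take $r=R+2$ and the witness set $E$ with coordinates $2,\dots,r$ restricted to $\left(\cap_{i\in C}G_i\right)\setminus\left(\cup_{i\notin C}G_i\right)$, and both identify $\Y^+=\Y_C\times C$ (your $D$ equals the paper's $\Y_C$), the only cosmetic difference being that you take the first coordinate to be all of $\X$ rather than $\cup_{i\in\Y_C}G_i$, which is immaterial since $p_{ij}(x_{1:r})$ does not depend on $x_1$ for an HMM. Your write-up is in fact more detailed than the paper's, which asserts the identification of $\Y^+$ without spelling out the path-concatenation argument you give.
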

\begin{proof}
There must exist integer $R \geq 1$ such that $\mathbb{P}_C^R$
consists of only positive elements. Defining $\Y_C=\{i \in \Y \: |
\: p_{ij}>0, j \in C \}$ and taking
$$E=\left( \cup_{i \in \Y_C} G_i \right) \times \left[ \left(\cap _{i\in C}G_i \right)\setminus \left(\cup _{i\notin C}G_i \right) \right]^{R+1},$$
we have that \textbf{A1} holds with $\Y^+=\Y_C \times C$. Observe
that $E_{(1)}= \cup_{i \in \Y_C} G_i $. Since $$\psi(E_{(1)}\times
\Y^+_{(1)})=\mu\big(\cup_{i \in \Y_C} G_i\big)>0$$ we see that
\textbf{A2} also holds.
\end{proof}
\\\\
The cluster-assumption was introduced in \cite{IEEE,AV,VT2} in other
purposes than exponential forgetting. Later it was successfully
exploited in many different setups \cite{kuljus,peep,smoothing}. In
those earlier papers, the concept of cluster was stronger than
(\ref{weak-cl}), namely
 $C\subset\Y$ was called a cluster if
\begin{equation} \label{HMM-clustermu}
\mu\left(\cap _{i\in C}G_i\right )>0\quad {\rm and}\quad \mu \left[
\left (\cap _{i\in C}G_i\right) \cap \left (\cup _{i\notin C}G_i
\right) \right]=0.
\end{equation}
The weaker definition of cluster  (\ref{weak-cl}) was first
introduced in \cite{PMMinf}.
\\\\
We shall now show how in the case of finite
${\cal Y}$, the cluster-assumption naturally generalizes
many existing mixing conditions encountered in the literature. The
following assumption  is known as {\it strong mixing condition}
(Assumption 4.3.21 in \cite{HMMbook}): for every $x\in \X$, there
exists probability measure  $K_x$ on $\Y$ and strictly positive
functions $\zeta^-$, $\zeta^+$ on $\X$ such that
\begin{equation}\label{strHMM}
\zeta^-(x)K_x(j)\leq p_{ij}f_j(x)\leq \zeta^+(x)K_x(j)\quad \forall
i\in \Y.
\end{equation}
A stronger version of the strong mixing condition is the following:
there exists positive numbers $\sigma^-$ and $\sigma^+$ and a
probability measure $K$ on $\Y$ such that
\begin{equation}\label{strHMM2}
\sigma^-K(j)\leq p_{ij}\leq \sigma^+K(j),\quad \forall i\text{   and
}\quad 0<\sum_jK(j)f_j(x)<\infty,\quad \forall x\in \X.
\end{equation}
This is Assumption 4.3.24 in \cite{HMMbook}. It is easy to verify
that under the strong mixing condition the Dobrushin coefficent of
$r$-step transition matrix $U(x_{s:n})=F_{r-1;1}(x_{s:n})$ can be
bounded above by $$\delta(U)\leq
\prod_{i=s+1}^{s+r-1}\Big(1-{\zeta^{-}(x_i)\over
\zeta^{+}(x_i)}\Big).$$ Under (\ref{strHMM2}) the upper bound
$(1-{\sigma^+\over \sigma^-})^{r-1}$ --  a constant less than 1. Now
it is clear that under (\ref{strHMM2}) the exponential forgetting
holds with
non-random universal constant $C^*$, i.e. in the inequality (\ref{bound1})  $C_s\equiv C^*$ for every $s$.\\\\
In the book \cite{HMMbook}, the Assumptions 4.3.21 and 4.3.24 as well as Assumptions 4.3.29 and 4.3.31 below are
stated for general  state space model, where ${\cal Y}$ is general space, and so (\ref{strHMM}) and (\ref{strHMM2}) are just the versions  of these assumptions for the discrete (finite or countable infinite) ${\cal Y}$. We now briefly argue that for the case of discrete  ${\cal Y}$ they are rather restrictive and our cluster-assumption naturally generalizes them. Indeed, it  is easy to see that
(\ref{strHMM2}) holds if $p_{ij}>0$ for every $i,j$ and for every
$x$, there exists $j$ so that $f_j(x)>0$ (this is a very natural
condition, otherwise leave $x$ out of $\X$). On the other hand, if
the transition matrix is is irreducible then every row has at least
one positive entry and then (\ref{strHMM2}) implies that $p_{ij}>0$
for every $i,j$ -- a rather strong restriction on transition matrix.
The same holds for (\ref{strHMM}). Indeed, since for every $j$,
there exists $x$ so that $f_j(x)>0$   and for every $j$ there exists
$i$ such that $p_{ij}>0$ (implied by irreducibility), then for every
$j$ there exists $x$ and $i$ so that $ p_{ij}f_j(x)>0$. Then
$p_{i'j}>0$ for every $i'$ so that all entries of transition matrix
are positive. If the entries of $\mathbb{P}$ are all positive (as it
is sometimes assumed, e.g. \cite{forgInit2}), then any cluster
satisfies the requirement of cluster assumption (with $R=1$), so
that strong mixing condition trivially implies cluster-assumption.
\\\\
In order to incorporate zero-transition, the primitivity of one-step
transition matrix $\mathbb{P}$ could be replaced by that of $R$-step
transition matrix for some $R>1$. An example of such kind of mixing
assumptions is the following (Assumption 4.3.29 in \cite{HMMbook},
see also \cite{Ung3,zeitouni}): There exists positive numbers
$\sigma^-$ and $\sigma^+$, an integer $R$ and a probability measure
$K$ on $\Y$ such
 that with $p^R_{ij}$ being $i,j$-element of $\mathbb{P}^R$, we have
\begin{enumerate}
\item $\sigma^-K(j)\leq p^R_{ij}\leq \sigma^+K(j),\quad \forall i,j;$
\item $f^-(x)\leq \min_i f_i(x)\leq \max_i f_i(x)\leq f^+(x)\quad\forall i\quad
\text{  where  } f^-,f^+ \text{are strictly positive functions.}$
\end{enumerate}
When the densities are bounded away from below and above, i.e.
$0<\inf_x f^-(x)<\sup_x f^+(x)<\infty$, (for example, if $\X$ is
finite) then the constant $C_s$ in (\ref{bound1}) is non-random and
independent of $s$. We see that 1. relaxes the first requirement of
(\ref{strHMM2}), because (under irreducibility) now all elements of
$\mathbb{P}^R$ must be non-negative. For aperiodic chain, such $R$
always exists and so 1. is not restrictive. On the other hand, the
assumption on emission densities is stronger, because they all must
be strictly positive. When densities are all positive, then there is
only one cluster $C=\Y$, hence under 1. and 2. above, the
cluster-assumption holds. The assumption 2. about the positivity of
densities is often made in literature (e.g..
\cite{ChigSPA,forgInit1,forgInit3}). In particular, it is the
HMM-version of the nondegeneracy-assumption  \cite{vH09b,vH15}. Of
course, the above-mentioned articles deal with continuous state
space $\X$, where the technique is different. However, at least in
finite state case, the mutual equivalence of emission distributions
excludes many important models and can be restrictive. The cluster
assumption, however, combines the zero-densities and
zero-transitions, being therefore applicable for much larger class
of models.
\\\\
Another assumption of similar type, originally also applied in the case of finite ${\cal Y}$, can be found in
\cite{LeGlandMevel,Ung2}: the matrix $\mathbb{P}$ is primitive and
\begin{equation*}\label{LeG}
\int_{\X}{\min_i f_i(x)\over \max_i f_i(x)}f_j(x)\mu(dx)>0,\quad
\forall j\in S.\end{equation*} This assumption relaxes the
requirement of positive densities, but it  implies that $\mu\{x:
\min_i f_i(x)>0\}>0$ so that $\Y$ is a
cluster that satisfies cluster assumption.\\\\
Although we have seen that  the cluster assumption is weaker than
many mixing assumptions in the literature, it is still strictly
stronger than {\bf A1} and {\bf A2}. To illustrate this fact,
consider a following example (a modification of Example 5.1 in
\cite{IEEE}) of four state HMM with transition matrix
$$\left(
                                  \begin{array}{cccc}
                                    1/2 & 0 & 1/4 & 1/4 \\
                                    0 & 1/2 & 1/4 & 1/4 \\
                                    1/2 & 0 & 1/2 & 0 \\
                                    0 & 1/2 & 0 & 1/2\\
                                  \end{array}
                                \right).$$
Suppose $G_1=G_2$, $G_3=G_4$, $G_1\cap G_3=\emptyset$. There are two
clusters: $C_1=\{1,2\}$ and $C_2=\{3,4\}$, the corresponding
sub-transition matrices are not primitive. Thus cluster-assumption
fails. To see that {\bf A1} and {\bf A2} hold, take $\X=\{1,2\}$ and
$f_1(1)=f_2(1)=1, f_3(2)=f_4(2)=1$. Now, take $x_{1:3}=112$ and
observe that $p_{13}(x_{1:3})=p_{11}f_1(1)p_{13}f_3(2)={1\over 8}$.
Similarly, it holds that
$$p_{14}(x_{1:3})=p_{23}(x_{1:3})=p_{24}(x_{1:3})=p_{33}(x_{1:3})=p_{34}(x_{1:3})=p_{43}(x_{1:3})=p_{44}(x_{1:3})=1/8.$$ Since
$f_1(2)=f_2(2)=0$, we have
$$p_{11}(x_{1:3})=p_{12}(x_{1:3})=p_{21}(x_{1:3})=p_{22}(x_{1:3})=p_{41}(x_{1:3})=p_{42}(x_{1:3})=p_{31}(x_{1:3})=p_{32}(x_{1:3})=0.$$
Thus $\Y^+(x_{1:3})=\Y\times \{3,4\}$ and hence {\bf A1} and {\bf
A2} hold.\\\\
We conclude the section with some examples of assumptions made in
the literature that are weaker than cluster assumption (or not
comparable with it), but still stronger than {\bf A1} and {\bf A2}.
First of them is Assumption 4.3.31 in \cite{HMMbook}.  When adapted
to our case of discrete ${\cal Y}$, one of the main conditions of this assumption is (there
are also some other conditions, making it more stronger)  as
follows: there exists a $\mu$-a.s. non-identically null function
$\alpha \colon \X \rightarrow [0,1]$ and $C \subset \Y$ such that
for all $i,j\in \Y$ and for all $x \in \X$
\begin{align*}
\dfrac{\sum_{k \in C} p_{ik} f_k(x) p_{kj}}{\sum_{k \in
\Y}p_{ik}f_k(x) p_{kj}} \geq \alpha(x).
\end{align*}
This condition implies \textbf{A1}. Indeed, let $C' \subset \Y$ be a
cluster.  Then there exists $\X'$ such that $\mu(\X')>0$ and
$f_i(x)>0$ for $x \in \X'$ if and only if $i \in C'$. Take $E=\X
\times \{x \:| \: \alpha(x)>0\} \times \X'$. Thus for $x_{1:3} \in
E$,
\begin{align*}
\Y^+(x_{1:3})=\left\{(i,j) \: \middle| \:
\max_{k}p_{ik}f_k(x_2)p_{kj}f_j(x_3)>0 \right\}=\Y \times C',
\end{align*}
and so \textbf{A1} holds. It is also implicitly assumed that $\{x \:
| \: \alpha(x)>0 \}$ is $\mu$-positive, whence \textbf{A2} also
follows. This assumption is not comparable with cluster assumption.
\\\\
Another example of the kind can be found in \cite{DMR}, where one of
the main conditions, when adapted to our case of discrete ${\cal Y}$ (the article \cite{DMR} deals with state-space models), is the following:
there exists a state $l$ such that $p_{il}(x_{1:r-1})p_{lj}>0$ for
every $i,j\in \Y$.  The state $l$ is called {\it uniformly
accessible}. Clearly this condition is of type (\ref{sopot}) and as
argued in the Remark in Subsection \ref{sec:dobrushin}, slightly
stronger that ${\bf A1}$. Interestingly, although the methods in
\cite{DMR} are different as the ones in our paper (coupling), the
same kind of condition appears.
\\\\
Yet another way to re-define the cluster assumption is the
following: let $C\subset \Y$ be a cluster, but the matrix
$\mathbb{P}_C$ satisfies the following assumption: every column of
$\mathbb{P}_C$ either consists of strictly positive entries or has
all entries equal to 0. Such matrix satisfies Doeblin condition and
therefore the set $C$ is sometimes called to have {\it local
Doeblin} property. In \cite{forgInit1,forgInit3}, this condition is applied for general state-space ${\cal Y}$, our statement is again the discrete ${\cal Y}$ version of it. If all entries are
positive, then $\mathbb{P}_C$ is primitive (and cluster condition
holds), otherwise not. To see that {\bf A1} and {\bf A2} still hold,
construct the set $E\in \X^3$ as in the proof of Lemma
\ref{lem:HMMcond} with $R=1$. Then $\Y^+=\Y_C \times C'$, where
$C'\subset C$ is the set of states corresponding to non-zero
columns.  This kind of assumption appears in \cite{forgInit1}. In
\cite{forgInit3} it is strengthen so that to every observation $x$
corresponds a local Doeblin set that satisfies (\ref{weak-cl}).
\\\\
An interesting and easily verifiable sufficient condition for the
filter stability (\ref{tilde1}) is proven in \cite{ChigFilter}: the
transition matrix has to be primitive with at least one row
consisting of  all non-zero entries \cite[Ex. 1.1]{ChigFilter}. This assumption is not
comparable with cluster assumption, because the latter can be
fulfilled with a matrix having zero in every row, and vice versa. On
the other hand, it does not assume anything about the emission
densities and so it is very practical. We shall show that ${\bf
A1}-{\bf A2}$ still hold.
\begin{proposition} If $\mathbb{P}$ is irreducible and has a least
one row consisting of non-zero entries, then {\bf A1} and {\bf A2}
hold.\end{proposition}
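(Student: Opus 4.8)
The plan is to reduce \textbf{A1} to a purely combinatorial statement about the support graph of $\mathbb P$ and then dispatch \textbf{A2} mechanically. The starting observation is that in an HMM $p_{ij}(x_{1:r})>0$ depends on $x_{2:r}$ only through the emission supports: at each time $m$ the admissible hidden states are exactly the \emph{signature} $S_m:=\{k\in\Y : f_k(x_{m+1})>0\}$, and $(i,j)\in\Y^+(x_{1:r})$ iff there is a path $i=y_1\to\cdots\to y_r=j$ in the support graph of $\mathbb P$ with $y_m\in S_m$ for $2\le m\le r$. Hence, taking $E$ to be a cylinder $E_{(1)}\times X_{S_2}\times\cdots\times X_{S_r}$ with $X_S:=\{x : \{k : f_k(x)>0\}=S\}$ and each chosen signature satisfying $\mu(X_{S_m})>0$, the set $\Y^+$ is literally constant on $E$. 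Thus \textbf{A1} becomes: \emph{choose a sequence of realizable signatures so that the resulting $\Y^+$ is a nonempty product.} I would also first record that the positive row forces $p_{ll}>0$, so the finite irreducible chain $\mathbb P$ is aperiodic, hence primitive: $\mathbb P^{R}\gg 0$ for some $R$.

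Next I would phrase the product requirement as a synchronization property. Writing $R(i)\subseteq S_r$ for the set of endpoints reachable from start $i$ along admissible paths, one has $\Y^+=\{(i,j):j\in R(i)\}$, so $\Y^+$ is a product exactly when all the nonempty sets $R(i)$ coincide; then $\Y^+_{(2)}$ is this common set and $\Y^+_{(1)}=\{i:R(i)\ne\emptyset\}$. Here the state $l$ with strictly positive row acts as a hub: as soon as the reachable set at some time contains $l$, applying the next signature $S$ sends it onto all of $S$ (since $p_{lk}>0$ for every $k\in S$); repeating a fixed signature $S^*\ni l$ thereafter keeps the set equal to $S^*$, because the positive row refills it at each step. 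So it suffices to build a signature word together with a time $\sigma$ at which $l$ lies in the reachable set of \emph{every} surviving start, and then to append a run of a fixed $S^*\ni l$: from time $\sigma+1$ on all surviving starts are synchronized to $S^*$, giving the product with $\Y^+_{(2)}=S^*$.

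The hard part will be producing such a funnelling word, since the emission supports restrict which states are ``visible'' at each step and one cannot steer the observations according to the (unknown) hidden state. The mechanism I would exploit is primitivity together with the self-loop at $l$: from any state there is a positive-probability path into $l$, and the loop lets the chain idle there once reached. The cleanest special case, which I would isolate first, is when some realizable signature $S\ni l$ has the property that every state of $S$ reaches $l$ within $S$; then a single block $S_2=\cdots=S_r=S$ already synchronizes, as any admissible path either dies or reaches $l$ and spreads to all of $S$. For the general case I would argue that a suitable concatenation of realizable signatures drives every surviving reachable set through an in-neighbour of $l$ and then through $l$, invoking $\mathbb P^{R}\gg 0$ to guarantee the required connections; this reachability/synchronization argument is the genuine core of the proof.

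Finally, \textbf{A2} is routine given such an $E$. Since $Y$ is irreducible, $Z$ is $\psi$-irreducible with $\psi(\cup_i A_i\times\{i\})=\mu(\cup_i A_i\cap G_i)$. Because $x_1$ does not enter $\Y^+$, I am free to set the first coordinate $E_{(1)}=\cup_{i\in\Y^+_{(1)}}G_i$, whence $\psi(E_{(1)}\times\Y^+_{(1)})=\mu(\cup_{i\in\Y^+_{(1)}}G_i)>0$ as $\Y^+_{(1)}\ne\emptyset$ and each $\mu(G_i)>0$. Moreover $E(x_1)=X_{S_2}\times\cdots\times X_{S_r}$ has $\mu^{r-1}$-measure $\prod_m\mu(X_{S_m})>0$, independently of $x_1$, which is the remaining clause of \textbf{A2}.
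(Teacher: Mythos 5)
Your reduction of \textbf{A1} to a combinatorial statement about signatures is sound (your ``realizable signatures'' $X_S$ with $\mu(X_S)>0$ are exactly the paper's clusters), your observation that the positive row of $l$ makes $\mathbb{P}$ aperiodic hence primitive is correct, your reformulation of the product condition as ``all nonempty reachable sets $R(i)$ coincide'' is right, and your treatment of \textbf{A2} is complete and correct. But the proof has a genuine gap precisely where you flag it yourself: the construction of the funnelling word that puts $l$ in the reachable set of \emph{every} surviving start at a \emph{common} time is only announced (``For the general case I would argue that\dots''), never carried out, and you concede it is ``the genuine core of the proof.'' Worse, the two mechanisms you hint at do not suffice as stated. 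Idling at $l$ via the self-loop $p_{ll}>0$ requires $l$ to belong to every signature along the idle stretch, but the signatures needed to steer \emph{other} starts toward $l$ are clusters containing the states of some connecting path, and nothing forces these clusters to contain $l$; so a start that has reached $l$ can lose it while you are busy funnelling the others. And invoking $\mathbb{P}^R\gg 0$ only gives paths in the unrestricted support graph of $\mathbb{P}$, whereas what you need are paths admissible for a word of \emph{realizable} signatures --- the whole difficulty is that you cannot choose the signatures freely.

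The paper closes exactly this gap with an explicit induction. It fixes the hub state (say state $1$, whose row is all positive), partitions the surviving starts according to which endpoint of the current word they can reach ($A_1$ = those reaching $1$, $A_2$ = those reaching $2$ but not $1$, etc.), and then uses irreducibility of $\mathbb{P}$ to pick a support-graph path $2=i_1\to i_2\to\cdots\to i_s=1$, realized observationally by clusters $C_2,\dots,C_s$ containing $i_2,\dots,i_s$. Appending this block merges $A_1$ and $A_2$ into one class: starts in $A_2$ follow the path to $1$, and starts in $A_1$ keep reaching $1$ because from state $1$ the all-positive row lets them \emph{jump onto the path} at $i_2$ (this, not idling, is the correct invariant-preservation mechanism). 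Since the number of endpoint classes strictly decreases, finitely many iterations synchronize all surviving starts at state $1$, after which one final cluster block $C_1\ni 1$ spreads, giving $\Y^+=\Y^+_{(1)}\times C_1$. To repair your proof you would need to supply an argument of this kind; everything else in your proposal is in order.
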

\begin{proof} Let the first row of  $\mathbb{P}$ consisting of
strictly positive entries. Since every state belongs to at least one
cluster, let $C_1$ be a cluster containing 1. In what follows, for a
cluster $C$, let $G_C=(\cap _{i\in C}G_i)\setminus (\cup_{j\not\in
C} G_j)$. We construct the set $E$ as follows. Take $E_1=\X\times
F_1$, where $F_1=G_{C_1}$ and notice that $\Y^+(x_{1:2})$ is the
same for every $x_{1:2}\in E_1$. Indeed, if $(i,j)\in
\Y^+(x_{1:2})$, then  $p_{ij}f_j(x_2)>0$,
and if this holds for a $x_2\in F_1$, then it holds for any other
$x'_2\in F_1$ as well. Observe that due to the assumption $1\in
\Y^+_{(2)}$. Relabel the states so that
$\Y^+_{(2)}=\{1,2,\ldots,l\}$. Let $A_1\subset \Y^+_{(1)}$ be the
set of states that can be connected with 1. Formally, $i\in A_1$ if
$p_{i1}(x_{1:2})>0$ for every $x_{1:2}\in E_1$. Clearly $A_1\ne
\emptyset$. If $A_1=\Y^+_{(1)}$, then the proposition is proved --
just take $E=E_1\times F_1$ and observe that by assumption for any state $k$ in $C_1$, $p_{1k}>0$. Let $A_2=\Y^+_{(1)}\setminus A_1$
consists of states that cannot be connected to 1 but can be
connected to 2. Thus $i\in A_2$, if and only if $p_{i2}(x_{1:2})>0$,
but $p_{i1}(x_{1:2})=0$ for every $x_{1:2}\in E_1$. The set $A_2$
might be empty. Similarly define
$$A_k\DEF \{i\in \Y^+_{(1)}\setminus (\cup_{j=1}^{k-1} A_i):
p_{ik}(x_{1:2})>0,\quad \forall x_{1:2}\in E_1\},\quad
k=3,\ldots,l.$$ By irreducibility there exists a path
$i_1,i_2,\ldots,i_s$, with $i_1=2$ and $i_s=1$ from the state 2 to
the state 1. Let $C_2,\ldots C_s$ be the corresponding clusters
containing $i_2,\ldots, i_s$ and define $F_j=G_{C_j}$,
$j=2\ldots,s$. Finally take $E_2=F_2\times \cdots \times F_s$. Since
$p_{1 i_2}>0$ by assumption (the first row has all non-zero entries), we have that for every $x_{2:s}\in
E_2$, $p_{1 1}(x_{2:s})>0$ and $p_{2 1}(x_{2:s})>0$. Now enlarge the
set $E_1$ by taking $E_1\times E_2$ and redefine the sets
$A'_1,A'_2,\ldots A'_{l'}$. Observe: if for a $k=1,\ldots,l $ and
for $x_{2:s}\in E_2$, $p_{k,1}(x_{2:s})>0$ then $A_k\subset A'_1$.
Therefore $A_1\cup A_2\subset A'_1$, and $l'<l$.  If $l'>1$, then
proceed similarly by enlarging $E_1\times E_2$ until all elements of
$\Y^+_{(1)}$ can be connected with 1. This proves {\bf A1}. The
assumption {\bf A2} is trivial.\end{proof}

\subsection{Linear Markov switching model}
Let $\X= \mathbb{R}^d$ for some $d \geq 1$ and for each state $i \in
\Y$ let $\{\xi_k(i)\}_{k \geq 2}$ be an i.i.d. sequence of random
variables on $\X$ with $\xi_2(i)$ having density $h_i$ with respect
to Lebesgue measure on $\mathbb{R}^d$. We consider the {\it linear
Markov switching model}, where $X$ is defined recursively by
\begin{align} \label{LMSM}
X_{k}=F(Y_k)X_{k-1}+ \xi_k(Y_k), \quad k \geq 2.
\end{align}
Here $F(i)$ are some $d \times d$ matrices, $Y=\{Y_k\}_{k\geq 1}$ is
a Markov chain with transition matrix $(p_{ij})$, $X_1$ is some
random variable on $\X$, and random variables $\{\xi_k(i)\}_{k \geq
2, \: i \in \Y}$ are assumed to be independent and independent of
$X_1$ and $Y$. For the linear switching model measure $\mu$ is
Lebesgue measure on $\mathbb{R}^d$ and the transition kernel density
expresses as $q(x_2,j|x_1,i)=p_{ij}h_j(x_2-F(j)x_1)$. When $F(i)$
are zero-matrices, then the linear Markov switching model simply
becomes HMM with $h_i$ being the emission densities. When $F(i)=F$
for every $i\in \Y$, then the model becomes autoregressive model
with correlated noise. Linear Markov switching models, also
sometimes called {\it linear autoregressive switching models} have
been widely used in econometric modelling, see e.g.
\cite{MSM1,MSM2,MSM3}.
\\\\
The following result gives sufficient conditions for
\textbf{A1}-\textbf{A2} to hold. The analytic form of the stationary
density $p(z_1)$ is usually intractable for the linear switching
model, and therefore we will avoid its use in the conditions.
Instead, we will rely solely on the notion of $\psi$-irreducibility.
In what follows, let $\| \cdot \|$ denote the 2-norm on
$\X=\mathbb{R}^d$, and for any $x \in \X$ and $\epsilon>0$ let
$B(x,\epsilon)$ denote an open ball in $\X$ with respect to 2-norm
with center point $x$ and radius $\epsilon>0$.
\begin{lemma} \label{lem:switchCond}Let $Z$ be a $\psi$-irreducible linear Markov switching model. If the following conditions are fulfilled, then $Z$ satisfies \textbf{A1}-\textbf{A2}.
\begin{enumerate}[label=(\roman*)]
\item  \label{LMprimit}There exists set $C\subset \Y$ and $\epsilon>0$ such that the following two conditions are satisfied:
\begin{enumerate}
\item[1.] for $x \in B(0,\epsilon)$, $h_i(x)>0$ if and only if $i \in C$;
  \item[2.] the sub-stochastic matrix $(p_{ij})_{i,j\in C}$ is primitive.
\end{enumerate}
\item \label{LMreach} Denote $\Y_C=\{i \in \Y \:| \exists j\in C: \: p_{ij}>0\}$. There exists $i_0 \in \Y_C$ such that $(0,i_0) \in {\rm supp}(\psi)$.
\end{enumerate}
\end{lemma}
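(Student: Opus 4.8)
The plan is to construct explicitly the integer $r$ and the set $E\subset\X^r$ demanded by \textbf{A1}--\textbf{A2}, following the template of the HMM case (Lemma \ref{lem:HMMcond}) but with the emission-support sets $G_i$ replaced by a small ball around the origin. First I would fix $R\geq 1$ so that $(\mathbb{P}_C)^R$ has only strictly positive entries (the primitivity in \ref{LMprimit}), set $r\DEF R+2$, and put $M\DEF\max_{j\in\Y}\|F(j)\|$, where $\|F(j)\|$ is the operator norm of $F(j)$ (finite, as $\Y$ is finite). Choosing any $\delta>0$ with $\delta(1+M)<\epsilon$, the triangle inequality gives, for all $x_{k-1},x_k\in B(0,\delta)$ and every $j\in\Y$, the bound $\|x_k-F(j)x_{k-1}\|\le\|x_k\|+M\|x_{k-1}\|<\epsilon$, so that $x_k-F(j)x_{k-1}\in B(0,\epsilon)$. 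By the localization in \ref{LMprimit} this means $h_j(x_k-F(j)x_{k-1})>0$ precisely when $j\in C$. I then take $E\DEF B(0,\delta)^r$.

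Next I would verify \textbf{A1} by computing $\Y^+(x_{1:r})$ for $x_{1:r}\in E$. Using $p_{ij}(x_{1:r})=\sum_{y_{2:r-1}}\prod_{k=2}^r p_{y_{k-1}y_k}\,h_{y_k}(x_k-F(y_k)x_{k-1})$ with $y_1=i$, $y_r=j$, the remark of the previous paragraph shows that each factor $h_{y_k}(\cdot)$ vanishes unless $y_k\in C$; hence only paths with $y_2,\dots,y_r\in C$ contribute, and along such paths all the $x$-dependent factors are strictly positive. Thus $p_{ij}(x_{1:r})>0$ holds if and only if there is a path $i=y_1\to y_2\to\cdots\to y_r=j$ with $y_2,\dots,y_r\in C$ and all one-step probabilities positive, a purely combinatorial condition not depending on the choice of $x_{1:r}\in E$. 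This forces $j\in C$ and $i\in\Y_C$, giving $\Y^+\subseteq\Y_C\times C$; conversely, for $i\in\Y_C$ I pick $a\in C$ with $p_{ia}>0$, and primitivity yields $(\mathbb{P}_C)^R_{aj}>0$ for every $j\in C$, i.e.\ a length-$R$ path inside $C$ from $a$ to $j$, whence $\Y_C\times C\subseteq\Y^+$. Since $\mathbb{P}_C$ is primitive it has no zero rows, so $C\subseteq\Y_C$ and $\Y^+=\Y_C\times C\neq\emptyset$. Therefore $\Y^+(x_{1:r})=\Y_C\times C$ is the same product set for all $x_{1:r}\in E$, which is exactly \textbf{A1}, with $\Y^+_{(1)}=\Y_C$ and $\Y^+_{(2)}=C$.

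For \textbf{A2}, I note that $E_{(1)}=B(0,\delta)$ and that for any $x_1\in E_{(1)}$ one has $E(x_1)=B(0,\delta)^{r-1}$, so $\mu^{r-1}(E(x_1))=\bigl(\mathrm{Leb}\,B(0,\delta)\bigr)^{r-1}>0$; this is the second clause of \textbf{A2}. For the first clause I invoke hypothesis \ref{LMreach}: there is $i_0\in\Y_C$ with $(0,i_0)\in\mathrm{supp}(\psi)$. The sets $B(0,\delta)\times\{i_0\}$ are open neighbourhoods of $(0,i_0)$ in $\Z$, so membership in the support gives $\psi\bigl(B(0,\delta)\times\{i_0\}\bigr)>0$; since $i_0\in\Y_C=\Y^+_{(1)}$ and $B(0,\delta)=E_{(1)}$, I conclude $\psi\bigl(E_{(1)}\times\Y^+_{(1)}\bigr)\ge\psi\bigl(B(0,\delta)\times\{i_0\}\bigr)>0$. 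Together with the assumed $\psi$-irreducibility this yields \textbf{A2}.

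The step I expect to demand the most care is the uniformization in the first paragraph. Because the region forbidding state $y_k$ depends on the previous coordinate through the state-dependent matrix $F(y_k)$, one must select a single $\delta$ that keeps every increment $x_k-F(y_k)x_{k-1}$ inside $B(0,\epsilon)$ simultaneously for all $y_k\in\Y$, so that the cluster dichotomy of \ref{LMprimit} is activated along the entire block at once; this is what makes $\Y^+(x_{1:r})$ genuinely constant over $E$. Once this uniform norm bound is secured, the verification of \textbf{A1} is the same combinatorial path argument as in the HMM case, and \textbf{A2} reduces to the elementary Lebesgue computation above combined with the support hypothesis \ref{LMreach}.
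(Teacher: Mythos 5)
Your proof is correct and follows essentially the same route as the paper's: take $E=B(0,\delta)^{R+2}$ for a radius small enough that every increment $x_k-F(y_k)x_{k-1}$ lands in $B(0,\epsilon)$, deduce $\Y^+(x_{1:r})=\Y_C\times C$ from the cluster dichotomy plus primitivity, and use \ref{LMreach} for \textbf{A2}. The only differences are that you make explicit what the paper leaves implicit -- the choice $\delta(1+M)<\epsilon$ instead of an asserted $\epsilon_0$, the combinatorial path argument, and the support-of-$\psi$ computation for \textbf{A2} -- all of which are correct.
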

\begin{proof}There must exist $\epsilon_0 >0$ such that
\begin{align}\label{lessr}
\|x-F(j)x'\|<\epsilon, \quad \forall j \in \Y, \quad \forall x,x'
\in B(0,\epsilon_0).
\end{align}
By \ref{LMprimit} there exists $R \geq 1$ such that $\mathbb{P}_C^R$
contains only positive elements. We take $E=B(0,\epsilon_0)^{R+2}$.
Fixing $x_{1:R+2} \in E$, we have for any $i,j \in \Y$
\begin{align*}
p_{ij}(x_{1:R+2})&=\sum_{y_{1:R+2} \colon
(y_1,y_{R+2})=(i,j)}\prod_{k=2}^{R+2}p_{y_{k-1}y_k}h_{y_k}(x_k-F(y_k)x_{k-1}).
\end{align*}
Together with \eqref{lessr} and \ref{LMprimit} this implies that
$p_{ij}(x_{1:R+2})>0$ if and only if $i \in \Y_C$ and $j \in C$.
Hence $\Y^+(x)=\Y_C \times C$ for every $x \in E$. Together with
\ref{LMreach} this implies that \textbf{A1}-\textbf{A2} hold.
\end{proof}
\\\\
Note that  if densities $h_i$ are all positive around 0 (for
example, Gaussian), then \ref{LMprimit} is fulfilled when
$\mathbb{P}$ is primitive with $C=\Y$. General conditions for the
linear Markov switching model to be positive Harris and aperiodic
can be found in \cite{linearErgodic}.
\\\\
{\bf Remark:} Instead of the linear Markov switching model, we can
also consider the general Markov switching model, also called the
{\it nonlinear autoregressive switching model}. For this model the
linear recursion in \eqref{LMSM} is replaced by any measurable
function $G \colon \Y \times \X \rightarrow \X$, i.e.
\begin{align*}
X_{k}=G(Y_k, X_{k-1})+ \xi_k(Y_k), \quad k \geq 2,
\end{align*}
The statement of Lemma \ref{lem:switchCond} holds for this model as
well, if we demand that the $G(i,\cdot)$ satisfy the following
additional conditions:
\begin{align} \label{Gcond}
\mbox{$G(i,\cdot)$ are continuous at 0, and $G(i,0)=0$ for all $i
\in \Y$.}
\end{align}
If these conditions are too restrictive, a different approach is
needed to prove \textbf{A1}-\textbf{A2}. For general conditions for
positivity, Harris recurrence and aperiodicity of the non-linear
switching model see e.g. \cite{linearErgodic, nonlinSwitch1,
nonlinSwitch2}.

\begin{appendices}
\section{}
\begin{lemma} \label{lem:io} Let $Z$ be Harris recurrent. If some measurable set $W \subset \Z^n$, $n \geq 2$, satisfies  $\psi(W_{(1)})>0$ and
\begin{align*}
 P_z(Z_{1:n} \in W)>0, \quad \forall z \in W_{(1)},
\end{align*}
then  for all $z \in \Z$
\begin{align*}
P_z(Z \in W \mbox{ i.o.}) \DEF P_z \left(\bigcap_{k=1}^\infty
\bigcup_{l=k}^\infty \{Z_{l:l+n-1} \in W \} \right)=1.
\end{align*}
\end{lemma}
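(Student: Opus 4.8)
The plan is to combine Harris recurrence with a conditional (L\'evy) form of the second Borel--Cantelli lemma, applied along non-overlapping blocks. Write $g(z)\DEF P_z(Z_{1:n}\in W)$; this is a measurable function of $z$, and by hypothesis $g(z)>0$ for every $z\in W_{(1)}$. First I would truncate to get a uniform lower bound: set $G_m\DEF\{z\in W_{(1)} : g(z)\ge 1/m\}$, so that $G_m\nearrow W_{(1)}$ and hence $\psi(G_m)\nearrow\psi(W_{(1)})>0$. Fixing $m$ with $\psi(G_m)>0$ and writing $G\DEF G_m$, I obtain a set with $\psi(G)>0$ on which $g\ge 1/m$. (Should $W_{(1)}$ fail to be Borel, inner regularity of the $\sigma$-finite measure $\psi$ on the Polish space $\Z$ lets me shrink $G$ to a Borel set of positive $\psi$-measure without losing the bound on $g$.) Since $Z$ is Harris recurrent and $\psi(G)>0$, for every $z\in\Z$ we have $P_z(Z_k\in G\ \text{i.o.})=1$, i.e.\ $\sum_{k\ge1}\mathbb{I}_G(Z_k)=\infty$ $P_z$-a.s.

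Next I would decompose the time axis into residue classes modulo $n$ so as to obtain non-overlapping blocks. Fix a residue and let $l_0<l_1<\cdots$ be its positions, $l_{j+1}=l_j+n$. Put $A_j\DEF\{Z_{l_j:l_j+n-1}\in W\}$ and, with $\mathcal G_k\DEF\sigma(Z_1,\dots,Z_k)$ the natural filtration of $Z$, set $\mathcal H_j\DEF\mathcal G_{l_j+n}=\mathcal G_{l_{j+1}}$. Then $A_j\in\mathcal G_{l_j+n-1}\subseteq\mathcal H_j$, while $\mathcal H_{j-1}=\mathcal G_{l_j}$ already contains the block's starting state $Z_{l_j}$; hence by the Markov property $P(A_j\mid \mathcal H_{j-1})=P(Z_{l_j:l_j+n-1}\in W\mid Z_{l_j})=g(Z_{l_j})$. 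The whole point of passing to non-overlapping blocks is exactly this: the conditioning $\sigma$-field sees the initial state, which turns the conditional probability into $g$ evaluated there. L\'evy's conditional Borel--Cantelli lemma for $(\mathcal H_j)$ now gives, $P_z$-a.s.,
\[
\{A_j\ \text{i.o.}\}=\Big\{\sum_j g(Z_{l_j})=\infty\Big\}\supseteq\Big\{\sum_j \mathbb{I}_G(Z_{l_j})=\infty\Big\},
\]
the inclusion because $g\ge \tfrac1m$ on $G$.

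Finally I would tie the residue classes together. The total sum $\sum_{k\ge1}\mathbb{I}_G(Z_k)$, infinite a.s.\ by the first step, splits into its $n$ residue subsums, so a.s.\ at least one residue class $a$ (possibly $\omega$-dependent) has $\sum_j\mathbb{I}_G(Z_{l_j^{(a)}})=\infty$. Applying the previous display to that class shows $A_j^{(a)}$ occurs for infinitely many $j$, i.e.\ $Z_{l:l+n-1}\in W$ for infinitely many $l$, which is exactly the event $\{Z\in W\ \text{i.o.}\}$. As there are only finitely many residues, all the a.s.\ statements hold simultaneously, and since the starting point $z\in\Z$ was arbitrary this yields $P_z(Z\in W\ \text{i.o.})=1$ for every $z$.

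I expect the main obstacle to be the bookkeeping forced by the overlap of the blocks $Z_{l:l+n-1}$: the conditional probability is only bounded below when the conditioning field contains the block's first coordinate, which is why I pass to non-overlapping subsequences and align the indices so that $\mathcal H_{j-1}=\mathcal G_{l_j}$, then recombine via the pigeonhole over residues. A secondary, routine nuisance is the truncation $G_m$ (needed because $g$ need not be bounded away from $0$ on $W_{(1)}$) together with the measurability of $W_{(1)}$, both disposed of as indicated. Note that only Harris recurrence is used, not positivity, so Proposition~\ref{block} is not invoked.
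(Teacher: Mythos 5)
Your proof is correct, but it takes a genuinely different route from the paper's, whose entire argument is two sentences long: the paper performs the same truncation you do (by the argument used for \eqref{ineq:n0Bound}, it produces a subset $W'\subset W$ and $\epsilon>0$ with $\psi(W'_{(1)})>0$ and $P_z(Z_{1:n}\in W')\geq\epsilon$ for all $z\in W'_{(1)}$) and then simply invokes Lemma A.1 of the authors' earlier paper \cite{PMMinf} to get $P_z(Z\in W'\mbox{ i.o.})=1$, which implies the statement since $W'\subset W$. You replace that external citation by a complete, self-contained argument: Harris recurrence gives infinitely many visits to the truncated starting set $G$; passing to the $n$ residue classes makes the blocks non-overlapping, so that the Markov property identifies the conditional probability of each block event given the past as $g(Z_{l_j})\geq 1/m$ on visits to $G$; L\'evy's conditional Borel--Cantelli lemma plus a pigeonhole over the finitely many residue classes then yields the i.o.\ conclusion. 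Your version has two minor advantages: the truncation is lighter (you shrink only the set of starting points and never need a subset $W'\subset W$ --- the paper needs $W'$ only to match the hypotheses of the cited lemma), and you explicitly address the possible non-measurability of the projection $W_{(1)}$, a point the paper passes over in silence. The trade-off is length: the paper's proof is short at the cost of an external dependency, whereas yours uses only standard tools and, like the paper's, correctly relies on Harris recurrence alone rather than positivity.
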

\begin{proof} By the same argument as in the proof of (\ref{ineq:n0Bound}), there exists a set $W' \subset W$ and $\epsilon>0$ such that $\psi(W'_{(1)})>0$ and
\begin{align*}
 P_z(Z_{1:n} \in W')\geq \epsilon, \quad \forall z \in W'_{(1)}.
\end{align*}
By \cite[Lemma A.1]{PMMinf} $P_z(Z \in W' \mbox{ i.o.})=1$ for all
$z \in \Z$, which implies the statement.
\end{proof}
\\\\
{\bf Proof of Proposition \ref{block}.} Clearly, if $Z$ is a
stationary process, then the process $\ZZ$ is stationary as well, so
that the distribution of $\ZZ_1$ (under $\pi$) is invariant
probability measure for $\ZZ$.
\\\\
We are going to show that measure $\psi_r$ is a maximal
irreducibility measure for $\ZZ$. To see that $\psi_r$ is an
irreducibility measure, suppose $A$ satisfies $\psi_r(A)>0$. There
must exist $A' \subset A$ such that $\psi(A'_{(1)})>0$ and
$P(Z_{2:r} \in A'\mid Z_1=z_1)>0$ for all $z_1 \in A'_{(1)}$. Since
$\psi(A'_{(1)})>0$ then for every $z \in \Z$ there exists $k=k(z)
\geq r+1$ such that $P(Z_{k(z)} \in A'_{(1)}|Z_r=z)>0$. Thus
$P(\ZZ_{k(z)} \in A'|Z_r=z)>0$ for every $z \in \Z$, which implies
that $P(\ZZ_{k(z)} \in A|\ZZ_1=(z_{1:r-1},z))>0$ for every
$(z_{1:r-1},z) \in \Z^r$, and so $\pp$ is an irreducibility measure.
\\\\
 To show that $\pp$ is a maximal irreducibility measure, we need
show that $\pp \succ {\varphi}_r$ for arbitrary irreducibility
measure ${\varphi}_r$. Suppose ${\varphi}_r(A)>0$. Then
$P_{\pi}(\ZZ_1\in A)>0$, because invariant measure dominates any
irreducibility measure \cite[Prop. 10.1.2(ii)]{MT}. Then there
exists $A'\subset A$ so that $\pi(A'_{(1)})>0$ and $$
P\big(Z_{2:r}\in A'(z_1)\mid Z_1=z_1\big)>0\quad \text{for any}\quad
z_1\in A'_{(1)}.$$ Recall that $\psi$ and $\pi$ are maximal
irreducibility and invariant measures of (Harris) recurrent chain
$Z$. Then these measures are equivalent \cite[Th.10.4.9]{MT}. Hence
$\psi(A'_{(1)})>0$ and so by definition (\ref{moot}) $\psi_r(A)\geq
\psi_r(A')>0$. Thus $\pp \succ {\varphi}$.
\\\\
It remains to show that $\ZZ$ is Harris chain. Let $A$ be such that
$\psi_r(A)>0$. By (\ref{moot}), there exists $A'\subset A$ so that
$\psi(A'_{(1)})>0$ and $P_{z_1}\big((Z_2,\ldots,Z_r)\in
A'(z_1)\big)>0$ for every $z_1 \in A'_{(1)}$. Thus with
$$B=\cup_{z_1\in A'_{(1)}} \{z_1\}\times A'(z_1)\subset A,$$ we
have $$P(Z_{1:r}\in B\mid Z_1=z_1)>0,\quad \forall z_1\in A_{(1)}.$$
Since $\psi(A_{(1)})>0$, and $Z$ is Harris, by Lemma \ref{lem:io},
it follows
 that $P(Z_k\in B, \rm{i.o})=1$.
 Thus $\ZZ$ is a Harris chain.

\end{appendices}
\paragraph{Acknowledgment.} The research is supported by Estonian  institutional research funding
IUT34-5 and PRG 865.

\bibliographystyle{plain}
\bibliography{smoothing}
\end{document}